\newtheorem{theorem}{Theorem}
\newtheorem{corollary}{Corollary}
\newtheorem{lemma}{Lemma}
\newtheorem{remark}{Remark}
\newtheorem{definition}{Definition}
\newtheorem{question}{Question}
\title[On existence questions for the functional equations]{ On existence questions for the functional equations $f^8+g^8+h^8=1$ and $f^6+g^6+h^6=1$}
\author{Xiao-Min Li$^1$*, Hong-Xun Yi$^2,$   Risto Korhonen$^3$  }
\address{$^{1}$ Department of Mathematics, Ocean University of China, Qingdao, Shandong 266100, P. R. China
\vskip 2mm \hspace{1.5mm} Email:{\sf lixiaomin@ouc.edu.cn(X.-M. Li)).}
\vskip 2mm $^2$ Department of Mathematics, Shandong University, Jinan, Shandong 250199, P. R. China
\vskip 2mm \hspace{1.5mm} Email:{\sf hxyi@sdu.edu.cn(H.-X. Yi).}
\vskip 2mm $^3$ University of Eastern Finland, Department of Physics and Mathematics, P.O. Box 111,
FI-80101 Joensuu, Finland
\vskip 2mm \hspace{1.5mm}Email:{\sf risto.korhonen@uef.fi (R.-J. Korhonen)}	
}
\thanks{{\sf Corresponding author: Xiao-Min Li}}
\thanks{{\sf 2010 Mathematics Subject Classification.} Primary 30D35; Secondary 30D30.}
\thanks{Project supported in part by the NSF of Shandong Province, China (No. ZR2019MA029) and the FRFCU(No. 3016000/842464005)}
\thanks{{\sf Keywords.} Nevanlinna$'$s theory; Meromorphic functions; Fermat-type functional equations.}
\begin{document}
\vskip 2mm
\par
\maketitle
\begin{abstract}  In 1985, Hayman \cite{Hayman1985} proved that there do not exist non-constant meromorphic functions $f,$ $g$
and $h$ satisfying the functional equation $f^n+g^n+h^n=1$ for $n\geq 9.$ We prove that there do not exist non-constant meromorphic solutions $f,$ $g,$ $h$ satisfying the functional equation $f^8+g^8+h^8=1.$ In 1971, Toda \cite{Toda1971} proved that there do not exist non-constant entire functions $f,$ $g,$ $h$ satisfying $f^n+g^n+h^n=1$ for $n\geq 7.$ We prove that there do not exist non-constant entire functions $f,$ $g,$ $h$ satisfying the functional equation $f^6+g^6+h^6=1.$ Our results answer questions of G. G. Gundersen.
\end{abstract}
\maketitle
\section{Introduction and main results}
\vskip 2mm
\par
In this paper, we consider the problem whether
there exist transcendental meromorphic functions $f,$ $g,$ $h$ which satisfy the
functional equation
\begin{equation}\label{eq1.1}
f^n + g^n + h^n =1,
\end{equation}
where $n \geq 2$ is an integer. By \textit{meromorphic} functions, we will always mean meromorphic functions in the complex plane $\Bbb{C}.$
\par
\vskip 2mm
In 1985, W.K.Hayman proved the following result:
\par
\vskip 2mm
 \begin{theorem}\rm{}(\cite{Hayman1985})\label{Theorem1.1} \textit{There do not exist non-constant meromorphic functions $f,$ $g,$ $h$ satisfying
 \eqref{eq1.1} for $n \geq 9.$}
\end{theorem}
\par
\vskip 2mm
Later on, Gundersen \cite{Gundersen1998} and  Gundersen  \cite{Gundersen2001} gave examples of transcendental meromorphic functions $f,$ $g,$ $h$ satisfying \eqref{eq1.1} for the cases $n=6$ and $n=5.$ For the cases $n = 2, 3, 4,$ it is known that there exist transcendental entire functions $f,$ $g,$ $h$ satisfying \eqref{eq1.1}; see \cite{Gundersen1998}.
\par
\vskip 2mm
  Theorem \ref{Theorem1.1} can also be proved by using earlier results of Fujimoto \cite[Corollary 6.4]{Fujimoto1974} or Green
  \cite[Lemma on p. 71]{Green1975}. For rational functions, Hayman \cite{Hayman1985} proved the next result:
  \par
\vskip 2mm
 \begin{theorem}\rm{(\cite{Hayman1985})}\label{Theorem1.2} \textit{There do not exist non-constant rational functions $f,$ $g,$ $h$ satisfying
 \eqref{eq1.1} for $n \geq 8.$}
\end{theorem}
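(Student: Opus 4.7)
The plan is to reduce the functional equation to a polynomial identity and derive a contradiction via an $abc$-type inequality for polynomials in $\mathbb{C}[t]$.

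Writing $f = F_1/F_0$, $g = G_1/G_0$, $h = H_1/H_0$ in lowest terms and setting $A = F_1 G_0 H_0$, $B = F_0 G_1 H_0$, $C = F_0 G_0 H_1$, $D = F_0 G_0 H_0$, the equation $f^n+g^n+h^n=1$ becomes the polynomial identity
\[
A^n + B^n + C^n - D^n = 0.
\]
The first task is to verify that, under the standing assumption that $f, g, h$ are non-constant, no proper sub-sum of this four-term relation vanishes. A two-term vanishing $A^n + B^n = 0$ forces $f = \zeta g$ with $\zeta^n = -1$, whence $h^n \equiv 1$ and $h$ is constant; a two-term vanishing $A^n - D^n = 0$ forces $f = A/D$ to be a constant $n$-th root of unity; the remaining two-term cases are analogous. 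A three-term vanishing $A^n + B^n + C^n = 0$, combined with the main identity, forces $D \equiv 0$, which is absurd; a three-term vanishing such as $A^n + B^n = D^n$ is the two-variable Fermat identity, which by the classical application of Mason--Stothers has only trivial polynomial solutions for $n \geq 3$, forcing $f$ and $g$ (and then $h$) to be constant.

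Having excluded vanishing sub-sums, I apply the Brownawell--Masser generalization of Mason--Stothers. For coprime polynomials $P_1,P_2,P_3,P_4\in\mathbb{C}[t]$ with $\sum P_i = 0$ and no vanishing proper sub-sum,
\[
\max_i \deg P_i \;\leq\; \binom{3}{2}\bigl(N(P_1P_2P_3P_4) - 1\bigr),
\]
where $N(\cdot)$ counts distinct zeros. With $(P_1, P_2, P_3, P_4) = (A^n, B^n, C^n, -D^n)$, the radical of $P_1P_2P_3P_4$ divides $F_0F_1G_0G_1H_0H_1$, so $N \leq 2(d_f+d_g+d_h)$, where $d_\phi = \deg \phi$ for $\phi \in \{f,g,h\}$. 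Meanwhile $\max_i \deg P_i = n\cdot\max(\deg A,\deg B,\deg C,\deg D)$ is of order $n(d_f+d_g+d_h)$. Comparison yields an upper bound on $n$.

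The main obstacle is to sharpen the constant in this comparison enough to reach the precise threshold $n \geq 8$ stated in Theorem~\ref{Theorem1.2}. A direct application of Brownawell--Masser yields only roughly $n \leq 12$; closing the gap requires exploiting the perfect $n$-th power structure of each term. Two natural strategies are: (i) observe that $N(ABCD)$ is strictly smaller than the crude degree bound because the high multiplicities of $A^n, B^n, C^n, D^n$ contribute nothing to the distinct-root count, so a careful radical estimate substantially lowers the right-hand side; and (ii) factor $A^n + B^n = \prod_{\zeta^n = -1}(A+\zeta B)$ and $D^n - C^n = \prod_{\xi^n = 1}(D-\xi C)$, then apply Mason--Stothers to matched irreducible factors after passing to the cyclotomic extension $\mathbb{C}(e^{2\pi i/n})(t)$. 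This delicate combinatorial accounting of coprime factors and their distinct zero-sets, rather than any single new ingredient, is what produces the sharp threshold.
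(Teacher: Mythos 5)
There is a genuine gap: your argument, as written, does not prove the theorem for the stated range $n\geq 8$. After the reduction to $A^n+B^n+C^n-D^n=0$ and the exclusion of vanishing sub-sums, the Brownawell--Masser inequality with the radical estimate $N\leq \deg(F_0F_1G_0G_1H_0H_1)\leq 2(d_f+d_g+d_h)$ only rules out $n$ of size roughly $12$ or more, as you yourself concede. The entire content of Theorem \ref{Theorem1.2} at the sharp end ($n=8,9,10,11,12$, and in particular the case $n=8$ that this paper actually needs for Corollary \ref{corollary1.12}) is deferred to two ``strategies'' that are not carried out. Strategy (i) is circular: the radical $N(P_1P_2P_3P_4)$ already ignores all multiplicities, so the fact that $A^n,B^n,C^n,D^n$ are perfect powers has been fully exploited the moment you bound $N$ by the distinct zeros of $ABCD$; no further saving comes from that observation. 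Strategy (ii) (factoring $A^n+B^n$ and $D^n-C^n$ over the cyclotomic extension and matching factors) is precisely where the difficulty lives, and nothing is proved about how the resulting factors distribute their zeros; as stated it is a plan, not an argument. There are also unaddressed technical points that matter for the tool you invoke: $A,B,C,D$ are not pairwise coprime (e.g.\ $A$ and $B$ share $H_0$, $A$ and $D$ share $G_0H_0$), so the hypotheses of the version of Brownawell--Masser you quote need to be checked or the common factors removed (after which the terms are no longer obviously pure $n$-th powers of coprime data), and $\max_i\deg P_i$ is not automatically comparable to $n(d_f+d_g+d_h)$, since $\max(\deg A,\deg B,\deg C,\deg D)$ can be smaller than $d_f+d_g+d_h$; a lower bound for it must be argued.

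For comparison, the paper gives no proof of this statement: it is quoted from Hayman, and the paper notes that it (like Theorems \ref{Theorem1.1}, \ref{Theorem1.3}, \ref{Theorem1.4}) follows from Cartan's version of Nevanlinna theory, i.e.\ Cartan's second main theorem with truncated counting functions applied to the linear relation among $f^n, g^n, h^n, 1$, where in the rational case the exact accounting (including the behaviour at infinity) yields precisely the extra unit that brings the threshold from $9$ down to $8$. Your function-field $abc$ route is a legitimate alternative in spirit (it is the algebraic counterpart of Cartan's theorem), but to be a proof it must incorporate the analogue of that refinement --- for instance a truncated/Wronskian-based count tailored to the four-term relation --- rather than the generic coefficient $\binom{3}{2}$, which provably cannot reach $n=8$ by itself.
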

\par
\vskip 2mm
In 1971, Toda \cite{Toda1971} proved the following result:
\par
\vskip 2mm
 \begin{theorem}\rm{(\cite{Toda1971})}\label{Theorem1.3} \textit{There do not exist non-constant entire functions $f,$ $g,$ $h$ satisfying
 \eqref{eq1.1} for $n \geq 7.$}
\end{theorem}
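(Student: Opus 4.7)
The plan is to argue by contradiction using Nevanlinna theory. Assume there exist non-constant entire functions $f, g, h$ satisfying $f^n + g^n + h^n = 1$ for some $n \geq 7$. The whole approach exploits the two distinct factorizations
\begin{equation*}
\prod_{j=1}^n (1 - \xi_j f) = 1 - f^n = g^n + h^n = \prod_{k=1}^n (g - \eta_k h),
\end{equation*}
where $\xi_1, \ldots, \xi_n$ denote the $n$-th roots of unity and $\eta_1, \ldots, \eta_n$ the $n$-th roots of $-1$, together with the analogous identities obtained by cyclically permuting $f, g, h$.

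Since $f, g, h$ are entire, $N(r, f) = N(r, g) = N(r, h) = 0$ and $T(r, f^n) = n\,T(r, f) + O(1)$, etc. I would apply the Second Main Theorem to $f$ at the $n+2$ distinct targets $\{0, \infty, \xi_1, \ldots, \xi_n\}$ to obtain
\begin{equation*}
n\,T(r, f) \leq \overline{N}(r, 1/f) + \sum_{j=1}^n \overline{N}\bigl(r, 1/(f - \xi_j)\bigr) + S(r, f),
\end{equation*}
together with symmetric inequalities for $g$ and $h$. The two-sided factorization produces the crucial identity
\begin{equation*}
\sum_{j=1}^n \overline{N}\bigl(r, 1/(f - \xi_j)\bigr) = \overline{N}\bigl(r, 1/(g^n + h^n)\bigr) = \sum_{k=1}^n \overline{N}\bigl(r, 1/(g - \eta_k h)\bigr),
\end{equation*}
and its cyclic analogues, which tie the $\xi_j$-point counts of each individual function to the zero counts of the linear combinations of the other two.

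The principal obstacle, and what fixes the sharp threshold $n \geq 7$, is obtaining sufficiently sharp upper bounds for the quantities $\overline{N}\bigl(r, 1/(g - \eta_k h)\bigr)$. The naive estimate $\overline{N}\bigl(r, 1/(g - \eta_k h)\bigr) \leq T(r, g) + T(r, h) + O(1)$ is too crude and yields only a vacuous inequality after summation. Refinement comes from two observations: first, the multiplicities of zeros of $g - \eta_k h$ match exactly those of the corresponding $\xi_j$-points of $f$, so truncated counting functions can be traded between the two sides with tight control; second, at any such zero $f^n = 1$ is forced, so these zeros all lie above a finite set under $f$, which permits application of the Second Main Theorem to the linear forms $f - \eta_k g$, $g - \eta_k h$, $h - \eta_k f$ at appropriately chosen auxiliary values. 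Combining the three symmetric SMT inequalities for $f, g, h$ with the refined bounds yields an estimate of the form $n\,\Sigma \leq c_n \Sigma + S(r)$, where $\Sigma = T(r, f) + T(r, g) + T(r, h)$ and $c_n$ is a constant strictly less than $n$ precisely when $n \geq 7$; this forces $\Sigma = S(r)$ and contradicts the assumption that $f, g, h$ are non-constant.
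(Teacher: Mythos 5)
There is a genuine gap, and it sits exactly where the theorem lives. Your argument is organized correctly up to the point where you must beat the trivial estimate: the SMT applied to $f$ at $\infty,\xi_1,\dots,\xi_n$ and the identity $\sum_j \overline{N}\bigl(r,1/(f-\xi_j)\bigr)=\overline{N}\bigl(r,1/(g^n+h^n)\bigr)\leq\sum_k\overline{N}\bigl(r,1/(g-\eta_k h)\bigr)$ are fine, but everything then hinges on the ``refined bounds'' for $\overline{N}\bigl(r,1/(g-\eta_k h)\bigr)$, and these are never actually produced. The two observations you offer do not yield them: the multiplicity-matching claim is not even exact (at a common zero of $g$ and $h$ every factor $g-\eta_k h$ vanishes, and the multiplicity of the corresponding $\xi_j$-point of $f$ is the \emph{sum} over $k$, not that of a single factor), and ``these zeros lie above a finite set under $f$'' gives no counting-function inequality by itself. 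Most tellingly, no step of the sketch generates the number $7$: the assertion that the combined inequality reads $n\Sigma\leq c_n\Sigma+S(r)$ with $c_n<n$ ``precisely when $n\geq 7$'' is the conclusion restated, not derived. As written, the only bound available from your scheme is $\overline{N}\bigl(r,1/(g-\eta_k h)\bigr)\leq T(r,g)+T(r,h)+O(1)$, which you yourself note is vacuous after summation, so the proof does not close.

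For comparison, the result you are proving is Toda's theorem, which this paper quotes rather than proves; the standard argument (Toda, and Hayman via Cartan's theory, embodied in this paper's Lemma \ref{lemma2.3} and Lemma \ref{lemma2.4}) is quite different and shows where $7$ comes from. If $f^n,g^n,h^n$ are linearly dependent one reduces to a two-term equation $a f^n+b g^n=1$, impossible for non-constant entire $f,g$ when $n\geq 3$ (Gross). If they are independent, Nevanlinna's theorem on linearly independent functions summing to $1$ (a Wronskian of size $3$, hence counting functions truncated at level $2$) gives, for entire functions,
\begin{equation*}
T(r,f^n)\leq N_2\left(r,\frac{1}{f^n}\right)+N_2\left(r,\frac{1}{g^n}\right)+N_2\left(r,\frac{1}{h^n}\right)+S(r)
\leq \frac{2}{n}\bigl(T(r,f^n)+T(r,g^n)+T(r,h^n)\bigr)+S(r),
\end{equation*}
since $N_2(r,1/g^n)\leq 2\overline{N}(r,1/g)\leq 2T(r,g)$. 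Summing the three cyclic versions yields $\Sigma\leq\tfrac{6}{n}\Sigma+S(r)$, a contradiction exactly when $n\geq 7$. The threshold is thus $3\times 2=6<n$: three functions times truncation level two. Your factorization-by-roots-of-unity framework contains no analogue of this truncation mechanism, so to complete your route you would need to supply a concrete substitute (for instance, a Wronskian or Cartan-type estimate controlling the multiple zeros of the forms $g-\eta_k h$), not merely assert that one exists.
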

\par
\vskip 2mm
Gundersen-Tohge \cite{Gundersen2004} gave examples of transcendental entire functions $f,$ $g,$ $h$ satisfying \eqref{eq1.1} for $n= 5.$ It is known that there exist such examples for the cases $n = 2, 3, 4;$ see \cite{Gundersen2017}. For polynomials, Newman-Slater \cite{Newman1979} proved the next result:
\par
\vskip 2mm
 \begin{theorem}\rm{(\cite{Newman1979})}\label{Theorem1.4} \textit{There do not exist non-constant polynomials $f,$ $g,$ $h$ satisfying
 \eqref{eq1.1} for $n \geq 6.$}
\end{theorem}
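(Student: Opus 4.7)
Since non-constant polynomials are non-constant entire functions, Theorem \ref{Theorem1.3} already handles the case $n \geq 7$, so the only case requiring separate treatment is $n = 6$: I must show that there are no non-constant polynomial solutions to $f^6 + g^6 + h^6 = 1$.

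My approach is to argue by contradiction, combining Mason's polynomial ABC theorem with the explicit factorization of $f^6 + g^6 = 1 - h^6$ over $\mathbb{C}$. After canceling any common factor of $f, g, h$ so that $\gcd(f,g,h) = 1$, I would write
\[
\prod_{k=0}^{5}(f - \zeta_k g) \;=\; \prod_{j=0}^{5}(1 - \omega^j h),
\]
where $\zeta_0, \ldots, \zeta_5$ are the sixth roots of $-1$ and $\omega = e^{\pi i/3}$. In the principal case $\gcd(f, g) = 1$, the factors $L_k := f - \zeta_k g$ on the left form a pairwise coprime family (any common factor of $L_i, L_j$ would divide both $f$ and $g$), and the same holds for the $M_j := 1 - \omega^j h$. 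Unique factorization in $\mathbb{C}[z]$ then forces $L_k = \prod_j N_{j,k}$ and $M_j = \prod_k N_{j,k}$, where $N_{j,k} := \gcd(M_j, L_k)$; the roots of $N_{j,k}$ are precisely the points at which $h = \omega^{-j}$ and $f/g = \zeta_k$.

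Next, I would apply Mason's theorem to the $L_k$'s and $M_j$'s individually, iteratively extracting $\mathbb{C}$-linear relations among $f, g, h$ and reducing the problem to the two-variable polynomial Fermat equation $F^6 + G^6 = 1$. The latter admits no non-constant polynomial solutions by a direct application of Mason: if $F, G$ are coprime non-constant with $F^6 + G^6 = 1$, then $\max(6 \deg F, 6 \deg G) \leq \deg F + \deg G - 1$, which is impossible.

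The main obstacle is organizing the combinatorics of the $6 \times 6$ degree matrix $(\deg N_{j,k})$ — whose row and column sums are each forced to equal $\deg h$ in the non-degenerate case — and handling the auxiliary degenerate configurations, in particular the case $\gcd(f,g) > 1$ (where the $L_k$ need not be pairwise coprime) and the case in which the leading coefficients $a, b$ of $f, g$ satisfy $a^6 + b^6 = 0$ (which causes one $L_k$ to drop in degree and breaks the matching). In each such subcase, a tailored Mason-type degree bound must be invoked to return to the base two-variable Fermat obstruction.
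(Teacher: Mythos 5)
The paper does not prove this statement at all: Theorem \ref{Theorem1.4} is quoted from Newman--Slater \cite{Newman1979}, with the remark that it (like Theorems \ref{Theorem1.1}--\ref{Theorem1.3}) follows from Cartan's version of Nevanlinna theory, i.e.\ from a Wronskian/truncated-counting argument. So your attempt has to be judged on its own merits. The reduction of the case $n\geq 7$ to Theorem \ref{Theorem1.3} is fine. The problem is the case $n=6$, which is the entire content of the theorem beyond Toda's result, and there your text is a plan rather than a proof. The bookkeeping with $L_k=f-\zeta_k g$, $M_j=1-\omega^j h$ and the gcd matrix $N_{j,k}$ is harmless, but the pivotal step --- ``apply Mason's theorem to the $L_k$'s and $M_j$'s individually, iteratively extracting $\mathbb{C}$-linear relations among $f,g,h$ and reducing to $F^6+G^6=1$'' --- is never specified: Mason's theorem applies to a three-term identity $A+B+C=0$ with coprime terms, and you never say which identities you feed it, nor how a degree inequality for such an identity produces a \emph{linear relation} among $f,g,h$. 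You yourself flag the $6\times 6$ degree-matrix combinatorics and the degenerate subcases as ``the main obstacle'' to be handled by ``a tailored Mason-type degree bound,'' which is an admission that the core of the argument is missing. This is not a routine gap: the natural four-term application of the generalized Mason/Brownawell--Masser inequality to $f^6+g^6+h^6-1=0$ gives $nd\leq 3(\deg f+\deg g+\deg h)-3$, which only rules out $n\geq 9$, so reaching $n=6$ genuinely requires an additional idea that your sketch does not supply.

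If you want an elementary self-contained proof of the $n=6$ polynomial case, the Wronskian route (the polynomial analogue of Theorem \ref{TheoremB}) works and is close in spirit to how the paper's quoted sources argue. If $f^6,g^6,h^6$ are linearly dependent one reduces to $aF^6+bG^6=1$, which Mason's theorem excludes; otherwise $W=W(f^6,g^6,h^6)\not\equiv 0$, and a local computation (as in Lemma \ref{lemma2.1}) shows $f^4g^4h^4\mid W$, so $4(d_1+d_2+d_3)\leq \deg W$ with $d_1=\deg f$ etc. On the other hand, using $f^6+g^6+h^6=1$ to replace one row, $W=(g^6)'(h^6)''-(g^6)''(h^6)'$ and its two analogues, whence $\deg W\leq 6d_i+6d_j-3$ for each pair $i\neq j$; averaging gives $\deg W\leq 4(d_1+d_2+d_3)-3$, a contradiction. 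Replacing your unproved ``iterative extraction'' step by an argument of this kind (or by quoting Cartan's theorem for polynomials) would close the gap; as written, the proposal does not prove the $n=6$ case.
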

\par
 \vskip 2mm
  We mention that Hayman \cite{Hayman1985} showed that Theorems \ref{Theorem1.1}-\ref{Theorem1.4} can all be proved by using Cartan's version of Nevanlinna Theory.
  \par
 \vskip 2mm From the above discussion, one may ask the following questions:
 \par
\vskip 2mm
\begin{question}\rm{ (\cite{Gundersen2003} and \cite[Question 3.1]{Gundersen2017})}\label {question1.5} \, whether or not there exist non-constant meromorphic functions $f,$ $g,$ $h$ satisfying $f^n+g^n+h^n=1$ for the cases $n=7$ and $n=8$?
\end{question}
\par
\vskip 2mm
\begin{question} \rm{(\cite{Gundersen2003} and \cite[Question 3.3]{Gundersen2017})}\label{question1.6} \, whether or not there exist non-constant entire functions $f,$ $g,$ $h$ satisfying $f^6+g^6+h^6=1$?
\end{question}
\vskip 2mm
\par
Throughout the paper, we adopt the standard
notations $m(r,f),$ $N(r,f),$ $T(r,f),$ $S(r,f),$ etc., in the Nevanlinna theory of a meromorphic function $f.$ We assume the readers are familiar with the fundamental results and the typical kind of arguments in Nevanlinna theory; see for example, in \cite{Cherry2001, Goldberg2008,  Hayman1964, Laine1993, Nevanlinna1970, YangYi2003, Yang1993}.  It will be convenient to let $E$ denote any set of positive real numbers of finite linear
measure, not necessarily the same set at each occurrence. Thus for a
non-constant meromorphic function $h,$ we denote by $S(r, h)$ any quantity
satisfying $S(r,h)=o(T(r,h)),$ as $r\rightarrow\infty, r\not\in E,$ where $T(r,h)$ is the Nevanlinna characteristic of $h.$  Following \cite[p.466]{HalburdKorhonen2006}, we define the notion of the small function of a non-constant meromorphic function in the complex plane as follows: let $f$ be a non-constant meromorphic function. and let $\alpha$ be also a meromorphic function in the complex plane such that $T(r, \alpha) = o(T(r,f))$ for all $r\in [0,+\infty)$ possibly outside a set $E\subset [0,+\infty)$ of finite linear measure. Then, the meromorphic function $\alpha$ is called small compared to $f,$ or slowly moving with respect to $f,$  or a small function of $f$ for short.
\par
\vskip 2mm
Regarding Questions \ref{question1.5} and \ref{question1.6},  we recall the following results from Ishizaki \cite{Ishizaki2002}:
 \par
\vskip 2mm
 \begin{theorem}\rm{(\cite[Proposition 5.1]{Ishizaki2002})}. \label{TheoremA}
  \textit{Suppose that there exist transcendental meromorphic functions
$f,$ $g,$ $h$ satisfying the functional equation \eqref{eq1.1} for $n = 8,$ and suppose
that $f^8,$ $g^8$ and $h^8$ are linearly independent. Then there exists a small function
$\alpha$ with respect to $f,$ $g$ and $h$ such that
\begin{equation}\label{eq1.2}
W(f^8, g^8, h^8) = \alpha f^6g^6h^6,
\end{equation}
where and in what follows, $W(f^8, g^8, h^8)$ is the
Wronskian determinant of $f^8,$ $g^8$ and $h^8.$
}
 \end{theorem}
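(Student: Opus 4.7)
The plan is to set $\alpha := W(f^{8}, g^{8}, h^{8})/(f^{6} g^{6} h^{6})$. Since $f^{8}, g^{8}, h^{8}$ are linearly independent by hypothesis, $W \not\equiv 0$, so $\alpha$ is a well-defined, nonzero meromorphic function, and the task reduces to proving $T(r,\alpha) = S(r,f) + S(r,g) + S(r,h)$.

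First, I would reduce the Wronskian using the functional equation. Differentiating $f^{8} + g^{8} + h^{8} = 1$ twice gives $(f^{8})^{(k)} + (g^{8})^{(k)} + (h^{8})^{(k)} = 0$ for $k = 1, 2$, so adding the three columns of the Wronskian matrix collapses one column to $(1, 0, 0)^{T}$ and cofactor expansion produces $W = (f^{8})'(g^{8})'' - (f^{8})''(g^{8})'$; the substitutions $f^{8} = 1 - g^{8} - h^{8}$ and $g^{8} = 1 - f^{8} - h^{8}$ likewise yield $W = (g^{8})'(h^{8})'' - (g^{8})''(h^{8})'$ and $W = -[(f^{8})'(h^{8})'' - (f^{8})''(h^{8})']$. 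Expanding any of these via $(f^{8})' = 8 f^{7} f'$ and $(f^{8})'' = 56 f^{6}(f')^{2} + 8 f^{7} f''$, and pulling out the obvious sixth-power factor, one obtains
\[
W = 64 f^{6} g^{6} D_{fg} = 64 g^{6} h^{6} D_{gh} = -64 f^{6} h^{6} D_{fh},
\]
where $D_{pq} := pq(p'q'' - p''q') + 7 p' q' (p q' - q p')$, giving three equivalent representations of $\alpha$.

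The decisive observation is that
\[
\frac{D_{pq}}{(pq)^{2}} = \frac{p'}{p}\cdot\frac{q''}{q} - \frac{p''}{p}\cdot\frac{q'}{q} + 7\,\frac{p'}{p}\cdot\frac{q'}{q}\left(\frac{q'}{q} - \frac{p'}{p}\right)
\]
is a polynomial in the logarithmic derivatives of $p$ and $q$, so the lemma of the logarithmic derivative yields $m(r, D_{pq}/(pq)^{2}) = S(r,p) + S(r,q)$; combined with the three representations above, this produces proximity bounds of the form $m(r, \alpha w^{6}) \leq 2 m(r, pq) + S(r)$ in each cyclic assignment of $\{p, q, w\} = \{f, g, h\}$. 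For the counting function, at a zero of order $p$ of one of $f, g, h$ with the other two non-vanishing, a direct local order count shows that $W$ vanishes to order at least $8p - 2$ while $(fgh)^{6}$ vanishes only to order $6p$, so $\alpha$ is holomorphic there; simultaneous zeros of two of them are handled by the same method. At a pole of any of $f, g, h$, the equation $f^{8} + g^{8} + h^{8} = 1$ forces polynomial relations on the leading Laurent coefficients (for instance $a_{0}^{8} + b_{0}^{8} + c_{0}^{8} = 0$ when all three have a pole of the same order), and these produce linear dependence among the leading column vectors of the Wronskian matrix, cancelling the expected poles of $\alpha$. Assembling the proximity and counting estimates via Nevanlinna's first main theorem then delivers $T(r, \alpha) = S(r, f) + S(r, g) + S(r, h)$.

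The main obstacle is the pole analysis: one must expand $f, g, h$ to several Laurent orders at each common pole, exploit the polynomial identities forced at each order by $f^{8} + g^{8} + h^{8} = 1$, and track how the resulting rank deficiency in the Wronskian matrix descends to $\alpha = W/(fgh)^{6}$. This bookkeeping, rather than any conceptual obstruction, constitutes the delicate technical core of the argument.
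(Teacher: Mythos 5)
The paper itself contains no proof of Theorem A --- it is quoted verbatim from Ishizaki's Proposition 5.1 --- so your proposal can only be measured against the standard argument; your overall strategy (set $\alpha=W/(f^6g^6h^6)$, reduce $W$ to $(f^8)'(g^8)''-(f^8)''(g^8)'$ via the functional equation, get proximity from the lemma on the logarithmic derivative, and control $N(r,\alpha)$ by local order counts) is the right one, and your identities $W=64f^6g^6D_{fg}$, etc., together with the zero analysis, are correct. The genuine gap sits exactly where you yourself place the ``technical core'': the pole analysis, and in the single most important case. At a common pole of $f,g,h$ of orders $p,q,t$, the reduced form bounds the pole order of $W$ by $8a+8b+3$ for whichever pair of pole orders $a,b$ you pair up, while $(fgh)^6$ has pole order $6(p+q+t)$; for $p=q=t=1$ this leaves $19$ against $18$, i.e.\ a possible simple pole of $\alpha$ at every common simple pole of $f,g,h$. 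This cannot be absorbed into $S(r)$: by the estimates this very paper derives later (Lemma \ref{lemma2.11}), $\overline{N}_0(r,f,g,h)=\tfrac13T(r)+S(r)$, so simple poles of $\alpha$ at those points would make $N(r,\alpha)$ comparable to $T(r)$ and destroy smallness. Moreover, the cancellation that rescues this case is not the one you invoke: the relation $a_0^8+b_0^8+c_0^8=0$ makes the leading columns of the full $3\times3$ Wronskian proportional, but proportional leading columns only improve the $3\times3$ determinant to pole order about $24p+1$, far too weak. The needed gain comes from the reduced $2\times2$ form plus antisymmetry: when $F=f^8$ and $G=g^8$ have poles of the \emph{same} order, the top terms of $F'G''-F''G'$ cancel identically (equivalently, $F''/F'-G''/G'$ is holomorphic there), giving pole order at most $8p+8q+2$, which matches $18$ exactly when $p=q=t=1$; when the orders differ, one must instead select, among your three representations, the one pairing the two functions of smallest pole order. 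Until this case analysis is actually carried out, the proof fails precisely at the dominant class of points.

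A smaller but real gap is the proximity step: the bound $m(r,\alpha w^6)\le 2m(r,pq)+S(r)$ does not by itself bound $m(r,\alpha)$. You need the standard device of splitting the circle $|z|=r$ into three sets according to which of $|f|,|g|,|h|$ is largest (the equation $f^8+g^8+h^8=1$ forces $\max(|f|,|g|,|h|)\ge 3^{-1/8}$ at every point), and on each set using the representation $\alpha=\pm 64\,\bigl(D_{pq}/(p^2q^2)\bigr)\cdot\bigl(p^2q^2/w^6\bigr)$ with $w$ the largest of the three, so that $|p|^2|q|^2/|w|^6\le |w|^{-2}=O(1)$ there; then $m(r,\alpha)\le\sum_{\text{pairs}}m\bigl(r,D_{pq}/(p^2q^2)\bigr)+O(1)=S(r)$. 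With that splitting and the corrected pole analysis, your outline does assemble into a complete proof along what is presumably Ishizaki's original line.
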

 \begin{theorem} \rm{(\cite[Proposition 6.1]{Ishizaki2002})}. \label{TheoremB}
 \textit{Suppose that there exist transcendental functions
$f,$ $g,$ $h$ satisfying the functional equation \eqref{eq1.1} for $n = 6,$ and suppose
that $f^6,$ $g^6$ and $h^6$ are linearly independent. Then there exists a small function
$\beta$ with respect to $f,$ $g$ and $h$ such that
\begin{equation}\label{eq1.3}
W(f^6, g^6, h^6) = \beta f^4g^4h^4.
\end{equation}
}
\end{theorem}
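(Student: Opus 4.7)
The plan is to reduce $W(f^6, g^6, h^6)$ to a simple determinant in logarithmic derivatives, apply Cramer's rule to the Fermat identity and its two derivatives, and then cube the resulting formula so that the size of $\beta$ is controlled via the logarithmic-derivative lemma.

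Set $L_\phi := \phi'/\phi$, $M_\phi := \phi''/\phi$ and $P_\phi := 5L_\phi^2 + M_\phi$ for $\phi \in \{f, g, h\}$. Since $(\phi^6)' = 6\phi^6 L_\phi$ and $(\phi^6)'' = 6\phi^6 P_\phi$, pulling a factor of $6$ out of rows two and three of the Wronskian matrix and $F = f^6, G = g^6, H = h^6$ out of the columns yields the clean factorisation
$$W(f^6, g^6, h^6) \;=\; 36\, f^6 g^6 h^6 \cdot \Delta, \qquad \Delta := \det\begin{pmatrix} 1 & 1 & 1 \\ L_f & L_g & L_h \\ P_f & P_g & P_h \end{pmatrix}.$$
Hence $\beta := W/(f^4 g^4 h^4) = 36\,f^2 g^2 h^2 \Delta$. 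Using the three equivalent forms $W = F''H' - F'H'' = F'G'' - F''G' = G'H'' - G''H'$ (all equal by $F+G+H=1$) and picking at each point the form whose differentiated columns do not involve the vanishing function, one verifies that $W$ vanishes to order at least $4(m_f + m_g + m_h)$ at every point, so $\beta$ is meromorphic.

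Next, differentiating the Fermat identity twice gives the linear system
$$\begin{pmatrix} 1 & 1 & 1 \\ L_f & L_g & L_h \\ P_f & P_g & P_h \end{pmatrix} \begin{pmatrix} F \\ G \\ H \end{pmatrix} \;=\; \begin{pmatrix} 1 \\ 0 \\ 0 \end{pmatrix}$$
with coefficient determinant $\Delta$, non-zero by the linear independence hypothesis. Cramer's rule then yields $f^6 = R_f/\Delta$, $g^6 = R_g/\Delta$, $h^6 = R_h/\Delta$, where $R_f := L_g P_h - L_h P_g$, $R_g := L_h P_f - L_f P_h$ and $R_h := L_f P_g - L_g P_f$. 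Multiplying these three formulas produces $(fgh)^6 \Delta^3 = R_f R_g R_h$, and cubing $\beta = 36\,(fgh)^2\Delta$ therefore yields the crucial identity
$$\beta^3 \;=\; 36^3\, R_f R_g R_h.$$

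The logarithmic-derivative lemma gives $m(r, L_\phi), m(r, P_\phi) = S(r, \phi)$, whence $m(r, R_f) \leq m(r, L_g) + m(r, P_h) + m(r, L_h) + m(r, P_g) = S(r, g) + S(r, h)$, and similarly $m(r, R_g), m(r, R_h) \leq S(r, f) + S(r, g) + S(r, h)$; summing, $m(r, R_f R_g R_h) = S(r)$. In the entire setting relevant for this paper, the only candidate poles of each $R_\phi$ lie at zeros of $f, g, h$. At a zero of $f$ of order $m_f$ (with $g, h$ non-vanishing) the cofactors $R_g$ and $R_h$ acquire poles of order at most $2$ from $P_f$, while the Cramer identity $f^6\Delta = R_f$ combined with $\Delta$ having a pole of order $2$ there forces $R_f$ to vanish to order $6m_f - 2$; the net valuation of $R_f R_g R_h$ is therefore $\ge 6m_f - 6 \ge 0$. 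A similar analysis at common zeros of $f$ and $g$, using the extra vanishings of $L_h, P_h$ forced by the differentiated Fermat identities, yields the same conclusion. Hence $R_f R_g R_h$ is entire, $T(r, R_f R_g R_h) = m(r, R_f R_g R_h) = S(r)$, and $T(r, \beta) = \tfrac{1}{3}T(r, \beta^3) = S(r)$, the desired small-function estimate. The most delicate step is this local divisor bookkeeping on $R_f R_g R_h$, which relies on the cancellation between the zero of one cofactor and the poles of the other two at each common zero configuration.
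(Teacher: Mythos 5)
The paper never proves Theorem B; it is quoted from Ishizaki \cite[Proposition 6.1]{Ishizaki2002}, so there is no in-paper argument to measure you against, and your proposal has to stand on its own. It does, in the setting you adopt: note that Ishizaki's Proposition 6.1 (and the only use of Theorem B here, in the proof of Theorem \ref{Theorem1.11}) concerns transcendental \emph{entire} $f,g,h$ -- the word ``entire'' has evidently been dropped in the paper's quotation -- so your restriction ``in the entire setting'' is the intended reading; for genuinely meromorphic $f,g,h$ your argument would still owe a valuation analysis at the poles. Granting that, the chain checks out: $(\phi^6)'=6\phi^6L_\phi$ and $(\phi^6)''=6\phi^6P_\phi$ give $W=36f^6g^6h^6\Delta$ with $\Delta\not\equiv0$ by linear independence; Cramer's rule applied to $F+G+H=1$ and its two derivatives gives $f^6\Delta=R_f$ and its analogues, hence $\beta^3=36^3R_fR_gR_h$; the lemma on the logarithmic derivative gives $m(r,R_fR_gR_h)=S(r)$; and the local counts are right, both at a zero of one function alone (order of $R_f$ at least $6m_f-2$ against poles of order at most $2$ of $R_g$ and $R_h$, net $\geq 6m_f-6\geq 0$) and at a common zero of two of them, where the differentiated identities force $(h^6)'$ and $(h^6)''$, hence $L_h$ and $P_h$, to vanish to orders at least $5$ and $4$, so that $R_f$ and $R_g$ have zeros of order at least $3$ against the pole of order at most $3$ of $R_h$. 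Thus $R_fR_gR_h$ is entire, $T(r,\beta)=\tfrac13T(r,\beta^3)+O(1)=S(r)$, as required. Two cosmetic points: say ``pole of order at most $2$'' for $\Delta$ (the inequality you actually use is unaffected), and the sentence claiming $W$ vanishes to order $4(m_f+m_g+m_h)$ ``so $\beta$ is meromorphic'' is superfluous, since a quotient of meromorphic functions is automatically meromorphic and the real content is the smallness you obtain from the cube identity. Compared with the standard route (Ishizaki's, and the Yi--Yang style arguments reproduced elsewhere in this paper), which writes $W=F'G''-F''G'=36f^4g^4\cdot f^2g^2R_h$ and its two symmetric analogues and then does the divisor and proximity bookkeeping from those three representations, your Cramer-plus-cubing derivation of $\beta^3=36^3R_fR_gR_h$ reaches the same identity more transparently; it is worth noting the trick is special to $n=6$, since only then does the factor $(fgh)^n$ produced by Cramer cancel exactly against $(fgh)^6$ coming from $\beta^3$.
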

\par
\vskip 2mm  For the existence of non-constant meromorphic solutions of a much more general three term functional equation, we recall the following result from  Hu-Li-Yang \cite{HuLiYang2003} and Yang-Zhang \cite{YangZhang2008, Yang2010}:
 \begin{theorem}\rm{ (\cite{HuLiYang2003, YangZhang2008,Yang2010})}
 \textit{ If
 \begin{equation}\nonumber
 \frac{1}{n}+ \frac{1}{m} +\frac{1}{k}\leq\frac{1}{3},
  \end{equation}
 then there do not exist any non-constant meromorphic functions $f,$ $g,$
$h$ satisfying the functional equation $f^n+g^m+h^k=1,$ where $n,$ $m$ and $k$ are positive integers such that $\min\{n,m,k\}\geq 2.$
}
\end{theorem}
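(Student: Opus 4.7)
The plan is to argue by contradiction: assume $f,g,h$ are non-constant meromorphic functions with $f^n+g^m+h^k=1$, set $F:=f^n$, $G:=g^m$, $H:=h^k$ so that $F+G+H\equiv 1$, and distinguish two cases according to whether $F,G,H$ are linearly dependent or independent over $\mathbb{C}$.

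In the linearly dependent case, a nontrivial relation $aF+bG+cH=0$ (necessarily not a scalar multiple of $(1,1,1)$), combined with $F+G+H=1$, yields by elimination a two-term Fermat-type equation $\alpha f^n+\beta g^m=\gamma$ with $\alpha,\beta,\gamma\in\mathbb{C}^{\ast}$ (or an analog involving $h$). Under the hypothesis $\tfrac{1}{n}+\tfrac{1}{m}+\tfrac{1}{k}\leq\tfrac{1}{3}$ with $\min\{n,m,k\}\geq 2$, every pair satisfies $\tfrac{1}{n}+\tfrac{1}{m}\leq\tfrac{1}{3}-\tfrac{1}{k}<\tfrac{1}{2}$, so the classical theory of two-term Fermat equations (cf.\ Gross, Baker, Hayman) excludes non-constant meromorphic solutions, providing the desired contradiction.

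In the linearly independent case, I consider the holomorphic curve $\Phi=[F:G:H]\colon\mathbb{C}\to\mathbb{P}^2$, which is linearly non-degenerate, and apply Cartan's Second Main Theorem with the four hyperplanes $\{X_i=0\}_{i=0,1,2}$ and $\{X_0+X_1+X_2=0\}$, in general position; the last is never hit because $F+G+H\equiv 1\neq 0$. In a reduced representation of $\Phi$, every zero of the coordinate corresponding to $F=f^n$ has multiplicity at least $n\geq 2$, so $N^{[2]}(r,1/F)\leq 2\overline{N}(r,1/f)$ (and analogously for $G,H$). The common denominator $D$ used in clearing the poles of $F,G,H$ vanishes exactly at the poles of $f,g,h$ and contributes additional zeros to each coordinate; these pole-contributions are controlled by exploiting the fact that $F+G+H=1$ forces any pole of one of $f,g,h$ to coincide with a pole of another (the polar parts must cancel), so that $\overline{N}(r,f)$, $\overline{N}(r,g)$, $\overline{N}(r,h)$ are mutually constrained. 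Combined with the lower bound $T(r,\Phi)\geq\max\{nT(r,f),mT(r,g),kT(r,h)\}+O(1)$ obtained by projecting $\Phi$ onto the $\mathbb{P}^1$-coordinate $[X_0:X_0+X_1+X_2]=[F:1]$ (and its analogues for $g,h$), the Cartan inequality then forces $\tfrac{1}{n}+\tfrac{1}{m}+\tfrac{1}{k}>\tfrac{1}{3}$, contradicting the hypothesis.

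The principal obstacle is the sharpness of the final bound in the linearly independent case. A direct Cartan estimate that handles the pole contributions crudely yields only a weak bound on $\max\{n,m,k\}$, far from the sharp $\tfrac{1}{3}$. Attaining the precise bound demands a delicate accounting of the contribution of the common pole divisor $D$ to the truncated counting functions in the reduced representation, coupled with the pole-matching constraints, in order to absorb the pole contributions into the Cartan sum in the tightest possible way.
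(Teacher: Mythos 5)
Your two--case skeleton is indeed the standard route to this statement (which the paper itself only quotes from Hu--Li--Yang and Yang--Zhang; its Section~2 lemmas contain the corresponding local machinery), and your linearly dependent case is essentially complete: after elimination both surviving coefficients are nonzero (otherwise one of the functions is constant), and a second--main--theorem count for $\alpha f^n+\beta g^m=1$, using that zeros of the two terms have multiplicity $\geq n$, resp.\ $\geq m$, and that poles must be common with multiplicity $\geq\max\{n,m\}$, rules out non-constant solutions because $\frac1n+\frac1m<\frac13$; this parallels Lemma \ref{lemma2.3}.

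The linearly independent case, however, is not proved: you state yourself that a ``direct Cartan estimate that handles the pole contributions crudely yields only a weak bound'' and that the sharp threshold ``demands a delicate accounting'' --- and that accounting, which is the entire content of the theorem, is not supplied. Concretely, in a reduced representation $[dF:dG:dH]$ with $F=f^n$, $G=g^m$, $H=h^k$ and common denominator $d$, the section of the fourth hyperplane $X_0+X_1+X_2=0$ is $dF+dG+dH=d$, which vanishes exactly at the poles of $f,g,h$; so, contrary to your remark, this hyperplane \emph{is} hit, and its truncated counting function is precisely the quantity that must be controlled. Bounding it and the extra zeros forced on the coordinates at pole points in the obvious way (at most $2+2=4$ per pole point, each pole point being shared by at least two of $f,g,h$), Cartan's truncated second main theorem gives only
\begin{equation}\nonumber
T_{\Phi}(r)\leq 4\left(\tfrac1n+\tfrac1m+\tfrac1k\right)T_{\Phi}(r)+S(r),
\end{equation}
i.e.\ the threshold $\tfrac14$ (and $\tfrac16$ without the shared-pole observation), not $\tfrac13$. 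Reaching $\tfrac13$ requires the pointwise Wronskian multiplicity analysis --- the analogue of Lemma \ref{lemma2.1} and Lemmas \ref{lemma2.6}--\ref{lemma2.10}, distinguishing zeros, poles with equal weighted orders $n|p|=m|q|=k|t|$, and unequal ones, and using the size of the exponents to absorb the excess; note that even the paper's Lemmas \ref{lemma2.7}--\ref{lemma2.8} assume $\min\{n,m,k\}\geq 6$, whereas the hypothesis $\frac1n+\frac1m+\frac1k\leq\frac13$ allows $\min\{n,m,k\}=4$ (e.g.\ $(4,24,24)$), so this step cannot be waved through. As written, the key inequality in the independent case is asserted rather than proved, which is a genuine gap.
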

\par
\vskip 2mm
 We will prove the following result:
 \par
\vskip 2mm
 \begin{theorem} \label {Theorem1.10}\rm{}
 \textit{There do not exist transcendental meromorphic solutions $f,$ $g,$ $h$ satisfying the functional equation $f^8+g^8+h^8=1.$}
 \end{theorem}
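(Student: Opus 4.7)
My plan is to argue by contradiction. Suppose $f$, $g$, $h$ are transcendental meromorphic functions satisfying $f^{8}+g^{8}+h^{8}=1$. First I would dispose of the linearly dependent case: if $f^{8},g^{8},h^{8}$ are linearly dependent over $\mathbb{C}$, combining any such relation with $f^{8}+g^{8}+h^{8}=1$ and eliminating one function reduces the problem to a two-term Fermat equation $Af^{8}+Bg^{8}=1$ (with constants $A,B$ after relabeling), for which Gross's classical theorem rules out non-constant meromorphic solutions when the exponent is at least $4$. So I may assume linear independence and invoke Theorem A to obtain a small function $\alpha$ with
\begin{equation*}
W(f^{8},g^{8},h^{8})=\alpha\,f^{6}g^{6}h^{6}.
\end{equation*}

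Next, I would use this identity together with derivatives of $f^{8}+g^{8}+h^{8}=1$ to extract multiplicity bounds on the zeros and poles of $f,g,h$. Since the three columns of the Wronskian determinant sum to $(1,0,0)^{T}$, column reduction gives
\begin{equation*}
W(f^{8},g^{8},h^{8})=(g^{8})'(h^{8})''-(h^{8})'(g^{8})''
\end{equation*}
and two symmetric expressions. A Taylor-expansion at a zero $z_{0}$ of $f$ of order $m\ge 1$ with $g(z_{0})h(z_{0})\ne 0$, using $(g^{8})'+(h^{8})'=-(f^{8})'$ and $(g^{8})''+(h^{8})''=-(f^{8})''$, shows that the left-hand side vanishes to order at least $8m-2$ whereas the right-hand side vanishes to order exactly $6m+\mathrm{ord}_{z_{0}}(\alpha)$; hence $\mathrm{ord}_{z_{0}}(\alpha)\ge 2m-2$. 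Integrating yields
\begin{equation*}
2\bigl(N(r,1/f)-\overline{N}(r,1/f)\bigr)\le N(r,1/\alpha)+S(r)=S(r),
\end{equation*}
and analogous bounds for $g$ and $h$. A parallel Laurent-series analysis at poles, together with the basic observation that every pole of $f,g,h$ must be shared by at least two of them with matching orders and eighth-power leading coefficients summing to zero, bounds the excess pole multiplicities and the count of ``mixed'' double poles by $S(r)$ as well. Common zeros of pairs, where the third function takes an eighth root of unity, receive the same treatment.

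To force the contradiction, I would apply Cartan's second main theorem to the non-degenerate holomorphic curve $\mathbf{x}=[f^{8}:g^{8}:h^{8}]$ in $\mathbb{P}^{2}$, using the four lines $X_{1}=0$, $X_{2}=0$, $X_{3}=0$, and $X_{1}+X_{2}+X_{3}=0$. In the reduced representation $(DF,DG,DH)$ with $D$ the common denominator and $F=f^{8}$ etc., the last hyperplane pulls back to $D$, whose zeros are exactly the poles of $f,g,h$. With truncation at level $2$, the contribution from $X_{1}=0$ is $2\overline{N}(r,1/f)+S(r)$ (since zeros of $f^{8}$ have order at least $8$), and symmetrically for the other coordinate lines; the pole line contributes at most $\overline{N}(r,f)+\overline{N}(r,g)+\overline{N}(r,h)+S(r)$ via the two-out-of-three shared pole property. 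Combining these with the simple-zero/pole reductions just established and with a lower bound $T(r,\mathbf{x})\ge 8T(r,f)+O(1)$ from the first main theorem for holomorphic curves yields an inequality that cannot be satisfied by transcendental $f,g,h$.

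The main technical obstacle will be this closing quantitative step. Cartan's SMT with only four hyperplanes in $\mathbb{P}^{2}$ produces the coefficient $q-n-1=1$ on the left, which by itself is too weak. To close the gap I expect to need either the small-function version of Cartan's SMT (due to Yamanoi) with $\alpha$ or its derivatives supplying additional hyperplanes, or an iteration of Ishizaki's Wronskian construction yielding further differential identities that sharpen the multiplicity bookkeeping. The local Laurent analyses at common zeros of pairs and at mixed poles also demand delicate accounting to ensure that every error term genuinely lies in $S(r)$.
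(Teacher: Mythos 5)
Your Case 1 reduction and the local multiplicity bookkeeping in the linearly independent case are sound and run parallel to what the paper does (its Lemmas \ref{lemma2.6}--\ref{lemma2.11} and the estimates \eqref{eq4.3}--\eqref{eq4.7}, \eqref{eq4.32}--\eqref{eq4.38} encode essentially the same information: almost all zeros and poles are simple, any two of $f,g,h$ have almost no common zeros, and the Wronskian identity $W(f^{8},g^{8},h^{8})=\alpha f^{6}g^{6}h^{6}$ controls the exceptional multiplicities). The genuine gap is exactly where you say it is: the closing quantitative step. Cartan's second main theorem for the curve $[f^{8}:g^{8}:h^{8}]$ with the four hyperplanes $X_{1},X_{2},X_{3},X_{1}+X_{2}+X_{3}$ and truncation at level $2$ is precisely the argument that proves Hayman's bound $n\geq 9$ (and Toda's $n\geq 7$ in the entire case); at $n=8$ the resulting inequality is borderline and yields no contradiction, which is why Gundersen's question was open. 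Your two proposed rescues are not worked out and there is no indication they can work: Yamanoi-type small-function second main theorems do not supply extra hyperplanes here (the only available small function is $\alpha$, which already enters through the Wronskian identity and contributes only $S(r)$), and ``iterating Ishizaki's Wronskian construction'' is exactly what Ishizaki did without resolving $n=8$. So the proposal, as it stands, reproduces the known partial results but does not prove the theorem.

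For comparison, the paper closes the argument with an ingredient of a completely different nature. From the sharing relations it first derives Claim II(v): \emph{any} transcendental meromorphic solution triple of $f^{8}+g^{8}+h^{8}=1$ with $f^{8},g^{8},h^{8}$ independent must satisfy $2T(r,f)=N\left(r,\frac{1}{f'}\right)+S(r)$, i.e.\ $f'$ must have many zeros. It then uses complex dynamics and rescaling (Lemmas \ref{lemma2.16}--\ref{lemma2.18}): a repelling periodic point of $f$ lies in $J(f)$, the iterates $f^{\circ mn_{0}}$ are non-normal there, and a Zalcman--Pang rescaling produces a transcendental entire limit $\tilde{g}$ omitting $0$ with the key property that $(f(\tilde{g}))'$ never vanishes (Claim III, via Hurwitz and the chain rule along the repelling orbit). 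Since $f(\tilde{g}),g(\tilde{g}),h(\tilde{g})$ again satisfy the same functional equation, Claim II(v) applied to this new triple forces $2T(r,f(\tilde{g}))=N\left(r,\frac{1}{(f(\tilde{g}))'}\right)+S_{1}(r)=S_{1}(r)$, contradicting Lemma \ref{lemma2.11} for the composed triple. It is this composition-plus-dynamics step, absent from your outline, that breaks the $n=8$ barrier; without it (or some substitute of comparable strength) the value-distribution estimates alone cannot finish the proof.
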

\par
\vskip 2mm
  In the same manner as in the proof of Theorem \ref{Theorem1.10} in Section 3, we can get the following result:
\begin{theorem} \label {Theorem1.11}\rm{}
\textit{There do not exist transcendental entire solutions $f,$ $g,$ $h$ satisfying the functional equation $f^6+g^6+h^6=1.$}
\end{theorem}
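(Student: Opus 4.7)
The approach parallels the proof of Theorem \ref{Theorem1.10} in Section 3, using Theorem \ref{TheoremB} in place of Theorem \ref{TheoremA} and exploiting the absence of poles. Assume for contradiction that $f,g,h$ are transcendental entire functions with $f^6+g^6+h^6=1$. If $f^6,g^6,h^6$ are linearly dependent, combining a nontrivial linear relation with the equation expresses one of $f^6,g^6,h^6$ as a constant combination of the other two, which (after a constant rescaling) reduces to a two-term Fermat equation $\tilde f^6+\tilde g^6=1$. The classical Gross-type theorem forbids such an equation from having nonconstant entire solutions, contradicting transcendence. Hence we may assume $f^6,g^6,h^6$ are linearly independent.

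By Theorem \ref{TheoremB}, there is a small function $\beta$ with respect to $f,g,h$ such that $W(f^6,g^6,h^6)=\beta f^4g^4h^4$. Using $f^6=1-g^6-h^6$ together with the relations $(f^6)'+(g^6)'+(h^6)'=0$ and $(f^6)''+(g^6)''+(h^6)''=0$, column operations collapse the Wronskian to $(g^6)'(h^6)''-(h^6)'(g^6)''$, and a direct expansion yields the compact identity
\[
\beta f^4 = 36\bigl[\,5\,g'h'\,W(g,h)+gh\,W(g',h')\,\bigr],
\]
together with its two cyclic analogues for $\beta g^4$ and $\beta h^4$.

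Next I would perform a sharp zero-counting analysis. At an unshared zero $z_0$ of $f$ of multiplicity $k$, the column of $W$ corresponding to $f^6$ has entries of orders $6k,\ 6k-1,\ 6k-2$, so $\operatorname{ord}_{z_0}W=6k-2$ generically; comparison with the right-hand side forces $\operatorname{ord}_{z_0}\beta=2k-2$. A parallel argument at points where two of $f,g,h$ share a zero also produces zeros of $\beta$. Because $T(r,\beta)=S(r)$, these exceptional contributions are negligible, so outside a set of Nevanlinna mass $S(r)$ every zero of $f,g,h$ is simple and unshared. Combining this with Cartan's second main theorem applied to $\Phi=[f^6:g^6:h^6]\colon\mathbb{C}\to\mathbb{P}^2$ against the four hyperplanes $X_1=0$, $X_2=0$, $X_3=0$, and $X_1+X_2+X_3=0$ (the last of which is never met, since $f^6+g^6+h^6\equiv 1$) gives
\[
6\max\{T(r,f),T(r,g),T(r,h)\}\le T_\Phi(r)\le 2\bigl(\overline N(r,1/f)+\overline N(r,1/g)+\overline N(r,1/h)\bigr)+S(r),
\]
which, combined with the first main theorem, forces $T(r,f)$, $T(r,g)$, $T(r,h)$ to coincide up to $S(r)$ and $\overline N(r,1/f)=T(r,f)+S(r)$ (and analogously for $g,h$).

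The final step converts these tight identities into a contradiction. Applying Nevanlinna's second main theorem to $f$ with the seven values $0$ together with the six $6$th roots of unity, and using $\overline N(r,1/f)=T(r,f)+S(r)$, yields $\overline N(r,1/(f^6-1))\ge 4T(r,f)+S(r)$. At each such zero of $f^6-1$, the relation $g^6+h^6=0$ forces $g/h$ to take one of six fixed unit-modulus values. Feeding this value distribution of $g/h$ through the three cyclic versions of the key identity, together with the logarithmic derivative lemma, should force $\beta$ to have more zeros than permitted by $T(r,\beta)=S(r)$. The main obstacle is precisely this last quantitative count: because the Cartan inequality is already tight, no slack remains for an easy contradiction, and the refined count must simultaneously exploit the Wronskian identity from Theorem \ref{TheoremB}, the rigid value-distribution constraints imposed by $f^6+g^6+h^6=1$, and the entireness of all three functions.
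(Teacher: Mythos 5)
Your preparatory steps are sound and run parallel to the paper: Case 1 via Gross, the Wronskian identity from Theorem \ref{TheoremB}, the expansion $\beta f^4=36\bigl[5g'h'W(g,h)+ghW(g',h')\bigr]$ (which is correct), and the tight Cartan/second-main-theorem identities $T(r,f)=T(r,g)+S(r)=T(r,h)+S(r)=\overline N\left(r,\frac{1}{f}\right)+S(r)$, which are exactly the content of the paper's Lemma \ref{lemma2.12} together with the simplicity and non-sharing of almost all zeros. The genuine gap is the final step, and you have in effect conceded it yourself: the proposed count (second main theorem with $0$ and the sixth roots of unity, the six possible values of $g/h$ at zeros of $f^6-1$, and a hoped-for excess of zeros of $\beta$) is not carried out, and there is no reason to expect it to close, precisely because all the inequalities you have assembled are already equalities up to $S(r)$. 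These tight identities alone cannot produce a contradiction; they are the same identities Ishizaki already had, and they are consistent with the known entire solutions for $n=5$ and meromorphic solutions for $n=6$, so some further mechanism is indispensable.

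What the paper actually does at this point is qualitatively different and is the heart of its proof. From the derivative relation $f^5f'+g^5g'+h^5h'=0$ and the sharing statements (Claim IV) it first upgrades the value distribution to Claim V, in particular $T(r,f)=N\left(r,\frac{1}{f'}\right)+S(r)=N_0\left(r,\frac{1}{f'}\right)+S(r)$, i.e.\ $f'$ must have a full measure of zeros away from zeros of $f$; your proposal never isolates any statement of this kind. Then, to contradict it, the paper leaves pure value distribution entirely: it takes a repelling periodic point $z_{n_0}\neq 0$ of $f$ (Lemmas \ref{lemma2.17}, \ref{lemma2.18}), rescales the iterates $f^{\circ mn_0}$ near $z_{n_0}$ by the Zalcman--Pang procedure (Lemma \ref{lemma2.16}) to obtain a transcendental entire limit $\tilde g$ omitting $0$ with $(f(\tilde g))'$ zero-free (Hurwitz plus the chain-rule lower bound coming from $|(f^{\circ mn_0})'|>1$), and finally observes that $f(\tilde g),g(\tilde g),h(\tilde g)$ again satisfy $F^6+G^6+H^6=1$, so Claim V applied to the composed triple forces $T(r,f(\tilde g))=N\left(r,\frac{1}{(f(\tilde g))'}\right)+S_2(r)=S_2(r)$, contradicting Lemma \ref{lemma2.12}. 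Without this composition--renormalization argument, or a concrete replacement for it, your outline stops exactly where the real difficulty begins, so as it stands it does not constitute a proof.
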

\par
\vskip 2mm
By combining Theorems \ref{Theorem1.2} and \ref{Theorem1.4} with Theorems \ref{Theorem1.10} and \ref{Theorem1.11}, we obtain the following results:
\par
\vskip 2mm
 \begin{corollary} \label {corollary1.12} \rm{}
\textit{ There do not exist non-constant meromorphic solutions $f,$ $g,$ $h$ satisfying the functional equation $f^8+g^8+h^8=1.$}
 \end{corollary}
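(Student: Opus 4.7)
The plan is to split the argument into cases by the type (rational vs.\ transcendental) of $f,g,h$ and reduce each case to the theorems already available. First I would record the bookkeeping estimate $T(r,f)\le T(r,g)+T(r,h)+O(1)$, which follows from $f^8=1-g^8-h^8$ by subadditivity of the Nevanlinna characteristic together with $T(r,f^8)=8T(r,f)+O(1)$; permuting $f,g,h$ gives two cyclic analogues. A direct consequence is that if two of the three functions are rational, then so is the third, so the only mixed configuration that needs separate treatment is the one in which exactly one of $f,g,h$ is rational and the other two are transcendental.

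If $f,g,h$ are all non-constant rational, Theorem \ref{Theorem1.2} applies; if all three are transcendental, Theorem \ref{Theorem1.10} applies. In the remaining case, say $g$ non-constant rational and $f,h$ transcendental, I would put $R:=1-g^8$ (a non-zero, non-constant rational function) so that $f^8+h^8=R$, introduce $u:=f/h$, and rewrite the equation as $h^8(u^8+1)=R$. Let $\omega_0,\dots,\omega_7$ denote the eighth roots of $-1$. At any point $z_0$ that is neither a zero nor a pole of $R$, the multiplicity of $u-\omega_k$ at $z_0$ must be a multiple of $8$, because otherwise $R/(u^8+1)$ would have a non-$8$-divisible zero or pole at $z_0$, contradicting that $h^8$ is the eighth power of a meromorphic function. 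Hence $\overline{N}(r,1/(u-\omega_k))\le\tfrac18 N(r,1/(u-\omega_k))+O(\log r)$ for each $k$; summing over $k$ gives $\sum_{k=0}^{7}\overline{N}(r,1/(u-\omega_k))\le T(r,u)+O(\log r)$. Applying Nevanlinna's second main theorem to $u$ with the eight target values $\omega_0,\dots,\omega_7$ yields $6T(r,u)\le\sum_{k=0}^{7}\overline{N}(r,1/(u-\omega_k))+S(r,u)$, hence $5T(r,u)\le O(\log r)+S(r,u)$, which forces $u$ to be rational. But then $h^8=R/(u^8+1)$ is rational and so $h$ itself is rational, contradicting that $h$ is transcendental.

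The hard part is the multiplicity analysis in the mixed case: one must justify carefully that, outside the finite zero-pole set of $R$, every preimage of $\omega_k$ under $u$ has multiplicity divisible by $8$, and check that the exceptional contribution from the zero-pole set of $R$ is indeed absorbed into an $O(\log r)$ term when passing between $N$ and $\overline{N}$. Once this local multiplicity bookkeeping is in place, the second main theorem argument closes the mixed case, and Theorems \ref{Theorem1.2} and \ref{Theorem1.10} together deliver the corollary.
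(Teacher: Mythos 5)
Your proposal is correct, and at the top level it follows the same strategy as the paper: the all-rational case is dispatched by Theorem \ref{Theorem1.2} and the all-transcendental case by Theorem \ref{Theorem1.10}. Where you genuinely differ is in the mixed case. The paper states Corollary \ref{corollary1.12} simply as a combination of those two theorems and never treats the configuration ``one rational, two transcendental'' explicitly; in the paper's framework that case is excluded by Lemma \ref{lemma2.2}, since a non-constant rational member would satisfy $T(r,\cdot)=O(\log r)=S(r)$ while the transcendental members force $T(r)/\log r\to\infty$, contradicting $T(r,g)\neq S(r)$. You instead give a self-contained argument: writing $f^{8}+h^{8}=R$ with $R=1-g^{8}$ rational and $u=f/h$, the identity $h^{8}(u^{8}+1)=R$ forces every zero of $u-\omega_{k}$ away from the finite zero--pole set of $R$ to have multiplicity divisible by $8$ (since pole orders of $h^{8}$ are multiples of $8$), whence $\sum_{k=0}^{7}\overline{N}\bigl(r,1/(u-\omega_{k})\bigr)\le T(r,u)+O(\log r)$, and the second main theorem with the eight roots of $-1$ gives $5T(r,u)\le O(\log r)+S(r,u)$, so $u$ and then $h$ are rational, a contradiction; this is a Gross-type argument and all the multiplicity bookkeeping you flag does go through (your preliminary observation that two rational members force the third to be rational is also correct, so the case analysis is exhaustive). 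What each approach buys: your version makes the deduction of the corollary airtight without invoking any of the paper's lemmas, at the cost of an extra page of Nevanlinna-theoretic argument; the paper's route is a one-line combination but, as literally stated, silently relies on Lemma \ref{lemma2.2} (or an argument like yours) to rule out the mixed case.
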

 \begin{corollary} \label {corollary1.13}\rm{ }
\textit{There do not exist non-constant entire solutions $f,$ $g,$ $h$ satisfying the functional equation $f^6+g^6+h^6=1.$}
\end{corollary}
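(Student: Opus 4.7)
The plan for Corollary \ref{corollary1.13} is a case analysis of the non-constant entire triple $(f,g,h)$ according to how many of its entries are polynomial and how many are transcendental, with the goal of reducing every case to Theorem \ref{Theorem1.4} or Theorem \ref{Theorem1.11}. A preliminary observation used repeatedly is that if $\varphi$ is entire and $\varphi^{6}$ is a polynomial, then $T(r,\varphi)=\frac{1}{6}T(r,\varphi^{6})=O(\log r)$, so $\varphi$ must itself be a polynomial.

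Assume $f,g,h$ are non-constant entire with $f^{6}+g^{6}+h^{6}=1$. If all three are polynomials, Theorem \ref{Theorem1.4} (Newman--Slater, $n=6$) applies directly. If all three are transcendental, Theorem \ref{Theorem1.11} applies directly. If exactly one, say $h$, is transcendental while $f,g$ are polynomials, then $h^{6}=1-f^{6}-g^{6}$ is a polynomial, so the preliminary observation forces $h$ to be a polynomial, a contradiction.

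The genuinely delicate case is that exactly two of $f,g,h$ are transcendental and the third is a non-constant polynomial; by symmetry say $f$ is polynomial and $g,h$ are transcendental. Writing $P(z):=1-f(z)^{6}$, we have $P\not\equiv 0$ (else $f^{6}\equiv 1$ would make $f$ constant) and $g^{6}+h^{6}=P$. Factoring over $\mathbb{C}$,
\begin{equation}\nonumber
P(z)\;=\;\prod_{k=0}^{5}\bigl(g(z)-\omega_{k}h(z)\bigr),
\end{equation}
where $\omega_{0},\ldots,\omega_{5}$ are the six sixth roots of $-1$. Each factor is entire and none vanishes identically (else $g^{6}+h^{6}\equiv 0$, contradicting $P\not\equiv 0$); since the product is the polynomial $P$, each factor has only finitely many zeros and therefore admits a representation $g-\omega_{k}h=p_{k}(z)e^{\alpha_{k}(z)}$ with $p_{k}$ a polynomial and $\alpha_{k}$ entire. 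Solving two of these six equations for $g$ and $h$ and substituting into the remaining four yields Borel-type identities of the shape $A\,e^{\alpha_{i}}+B\,e^{\alpha_{j}}+C\,e^{\alpha_{k}}=0$ with non-zero polynomial coefficients. Applying the Borel identity theorem (equivalently, Nevanlinna's theorem on exponentials with small-function coefficients) forces all pairwise differences $\alpha_{i}-\alpha_{j}$ to be constant. Consequently $g=A_{0}(z)e^{\alpha_{0}(z)}$ and $h=B_{0}(z)e^{\alpha_{0}(z)}$ for polynomials $A_{0},B_{0}$, and substituting back into $g^{6}+h^{6}=P$ forces $e^{6\alpha_{0}}$ to be a polynomial, whence $\alpha_{0}$ is constant and $g,h$ are polynomials, contradicting their transcendence.

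The main obstacle is this last two-transcendental, one-polynomial case, where the Borel identity theorem must be applied carefully to extract rigidity from the factorization of $g^{6}+h^{6}$; the three other sub-cases are essentially immediate from Theorems \ref{Theorem1.4} and \ref{Theorem1.11} together with the elementary sixth-power observation.
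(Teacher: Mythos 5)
Your proposal is correct, and it is in fact more complete than the paper's own derivation. The paper obtains Corollary \ref{corollary1.13} in a single line by ``combining'' Theorem \ref{Theorem1.4} (no non-constant polynomial solutions for $n\geq 6$) with Theorem \ref{Theorem1.11} (no transcendental entire solutions of $f^6+g^6+h^6=1$); the mixed configurations, where some of $f,g,h$ are polynomials and some are transcendental, are not discussed there at all, even though Theorem \ref{Theorem1.11} as stated only excludes triples in which all three functions are transcendental. You perform the same outer reduction to these two theorems, but you also close the mixed cases explicitly: the case of exactly one transcendental function is immediate, since its sixth power would be a polynomial and hence the function itself a polynomial; and the genuinely nontrivial case of two transcendental functions and one non-constant polynomial is settled by factoring $g^6+h^6=1-f^6=P$ into the six zero-free-up-to-finitely-many-zeros linear forms $g-\omega_k h=p_k e^{\alpha_k}$ and invoking Borel's theorem with polynomial coefficients. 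That step is sound: each three-term identity first forces one difference $\alpha_i-\alpha_j$ to be constant, and since the coefficients are rational and $e^{\alpha_i-\alpha_j}$ is entire and zero-free, a short bootstrap makes all differences constant, after which $e^{6\alpha_0}=P/(A_0^6+B_0^6)$ is rational and zero-free, hence constant, contradicting the transcendence of $g$ and $h$. So the trade-off is clear: the paper's argument is shorter but leaves the mixed cases implicit, whereas your factorization/Borel argument supplies exactly the missing (standard, elementary) step; the same remark applies to the way the paper deduces Corollary \ref{corollary1.12} from Theorems \ref{Theorem1.2} and \ref{Theorem1.10}, although there the meromorphic setting would require a different treatment of the mixed case than your entire-function factorization.
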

\par
\vskip 2mm Corollary \ref{corollary1.12} resolves Question \ref{question1.5} for $n=8$ and Corollary \ref{corollary1.13} resolves Question
\ref{question1.6} completely.

\section{Preliminaries}
\vskip 2mm
\par
 In this section, we will introduce some important lemmas to prove the main results in this paper. We first give the following notations: Let $F$ be a non-constant meromorphic function in the complex plane, and let $z_0\in\Bbb{C}$ be a point. Next we set
$$U(F,z_0)=
\left\{ \aligned
p, & \quad \text{if $z_0$ is a pole of $F$ with multiplicity $p\geq 1,$}\\
0, &  \quad \text{if $z_0$ is not a pole of $F.$}
 \endaligned
\right.
$$
and
$$\overline{U}(F,z_0)=
\left\{ \aligned
1, & \quad \text{if $z_0$ is a pole of $F$ with multiplicity $p\geq 1,$}\\
0, &  \quad \text{if $z_0$ is not a pole of $F.$}
 \endaligned
\right.
$$
Similarly, we define
$$U\left(\frac{1}{F},z_0\right)=
\left\{ \aligned
q, & \quad \text{if $z_0$ is a zero of $F$ with multiplicity $q\geq 1,$}\\
0, &  \quad \text{if $z_0$ is not a zero of $F.$}
 \endaligned
\right.
$$
and
$$\overline{U}\left(\frac{1}{F},z_0\right)=
\left\{ \aligned
1, & \quad \text{if $z_0$ is a zero of $F$ with multiplicity $q\geq 1,$}\\
0, &  \quad \text{if $z_0$ is not a zero of $F.$}
 \endaligned
\right.
$$
Let $F_1$ and $F_2$ be two non-constant meromorphic functions, and let
$$
D=\left|\aligned
F'_1 &\quad  F'_2\\
F''_1&\quad  F''_2
\endaligned
\right|=F'_1F''_2-F'_2F''_1.
$$
\vskip 2mm
\par
We recall the following results from Yi\cite{Yi1994}:
\vskip 2mm
\par
\begin{lemma}\rm{}(\cite{Yi1994})\label{lemma2.1} \textit{The following items hold:}
\vskip 2mm
\par
\rm{(i)} \textit{If $U\left(\frac{1}{F_1}, z_0\right)=q\geq 2$ and $U(F_2, z_0)=0,$ then $U\left(\frac{1}{D}, z_0\right)\geq q-2.$}
\vskip 2mm
\par
\rm{(ii)} \textit{If $U\left(\frac{1}{F_1}, z_0\right)=q_1\geq 2$ and $U\left(\frac{1}{F_2}, z_0\right)=q_2\geq 2,$ then $U\left(\frac{1}{D}, z_0\right)\geq q_1+q_2-3.$}
\vskip 2mm
\par
\rm{(iii)} \textit{If $U(F_1, z_0)=U(F_2, z_0)=p,$ then $U(D, z_0)\leq 2p+2.$}
\vskip 2mm
\par
\rm{(iv)}\textit{ If $U(F_1, z_0)=p_1$ and $U(F_2, z_0)=p_2,$ then $U(D, z_0)\leq  p_1+p_2+3.$}
\vskip 2mm
\par
\rm{(v)} \textit{If $U(F_1, z_0)=p$ and $U(F_2, z_0)=0,$ then $U(D, z_0)\leq  p+2.$}
\vskip 2mm
\par
\rm{(vi)} \textit{If $U(\frac{1}{F_1}, z_0)=p>0$ and $U(F_2, z_0)=q>0,$ then $U(D, z_0)-U\left(\frac{1}{D}, z_0\right)\leq  q-p+3.$}
\end{lemma}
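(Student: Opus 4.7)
The plan is to treat every claim locally at $z_0$ by expanding $F_1$ and $F_2$ in Taylor or Laurent series in the local coordinate $w = z - z_0$ and tracking the leading-order contributions through differentiation and multiplication. If $F_i$ has a zero of order $q_i$ at $z_0$, then $F_i(z) = w^{q_i}(a_0 + a_1 w + \cdots)$ with $a_0 \ne 0$, and differentiation lowers the order of vanishing by one each time; if $F_i$ has a pole of order $p_i$, then $F_i(z) = w^{-p_i}(c_0 + c_1 w + \cdots)$ with $c_0 \ne 0$, and differentiation raises the pole order by one each time. All six items follow by applying this counting to the two summands $F_1' F_2''$ and $F_2' F_1''$ of $D$.

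For items (i) and (ii), since $F_2$ is holomorphic at $z_0$ (in (i)) or also has a high-order zero there (in (ii)), both summands are divisible by $w^{q-2}$ in (i) and by $w^{q_1 + q_2 - 3}$ in (ii), which passes to their difference $D$. Items (iv) and (v) are obtained by the reverse (upper-bound) version of the same bookkeeping for poles: one simply adds the pole orders of $F_1'$ (or $F_1''$) and $F_2''$ (or $F_2'$) and takes the larger of the two.

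The one place where naive order-counting is insufficient is item (iii). When $F_1$ and $F_2$ both have a pole of order exactly $p$, each of $F_1' F_2''$ and $F_2' F_1''$ \emph{a priori} has pole order $2p+3$, but the leading coefficient in both equals $-p^2(p+1)a_0 b_0 w^{-(2p+3)}$, where $a_0, b_0$ are the top Laurent coefficients of $F_1, F_2$. These leading terms cancel in $D = F_1' F_2'' - F_2' F_1''$, so the pole order of $D$ drops by at least one to at most $2p+2$. This cancellation is the only non-routine observation in the proof and is the main (if short) obstacle to settling (iii).

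Item (vi) combines both regimes: with $F_1$ vanishing to order $p$ and $F_2$ having a pole of order $q$, the products $F_1' F_2''$ and $F_2' F_1''$ have Laurent order at most $-(q-p+3)$ at $z_0$ (the case $p=1$ being handled separately since then $F_1''$ is merely holomorphic rather than vanishing), and this passes to the difference $D$. Since $U(D, z_0) - U(1/D, z_0)$ is by definition the negative of the Laurent order of $D$ at $z_0$, this yields the stated bound $U(D, z_0) - U(1/D, z_0) \le q - p + 3$. Altogether, the lemma is pure local bookkeeping on (Laurent) expansions, with the single subtle point being the leading-term cancellation exploited in (iii).
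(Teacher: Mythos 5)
Your proof is correct, and there is nothing in the paper to compare it with: the paper states Lemma \ref{lemma2.1} as a quoted result from Yi's 1994 paper and gives no proof, so your local Laurent-expansion bookkeeping is a genuine addition rather than a rederivation of an in-text argument. The order counts all check out: in (i) the two products $F_1'F_2''$ and $F_2'F_1''$ vanish to orders at least $q-1$ and $q-2$; in (ii) both vanish to order at least $q_1+q_2-3$; in (iv) and (v) both products have pole order at most $p_1+p_2+3$, respectively $p+2$; and you correctly isolate the one non-routine point, the cancellation of the identical leading terms $-p^2(p+1)a_0b_0\,w^{-(2p+3)}$ in (iii), which is exactly what improves the naive bound $2p+3$ to $2p+2$. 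The only blemish is a wording slip in (vi): you say the products have ``Laurent order at most $-(q-p+3)$,'' which would assert a pole of order at least $q-p+3$ and would give the inequality in the wrong direction; what your computation actually shows is that the order is at least $p-q-3$, i.e.\ the pole order is at most $(q+2)-(p-1)=q-p+3$ for $F_1'F_2''$ and at most $(q+1)-(p-2)=q-p+3$ for $F_2'F_1''$ (with your separate, correct remark for $p=1$, where the second product is even better). With that sign fixed, your observation that $U(D,z_0)-U\left(\frac{1}{D},z_0\right)=-\mathrm{ord}_{z_0}D$ (valid since $D\not\equiv 0$, as in all the paper's applications where $f^n,g^m,h^k$ are linearly independent) yields the stated bound, and the proof is complete.
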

\vskip 2mm
\par
Next we let $f,$ $g,$ $h$ be non-constant meromorphic functions satisfying the following functional equation
\begin{equation} \label{eq2.1}
f^n+g^m+h^k=1,
\end{equation}
where and in what follows, $n,$ $m$ and $k$ are positive integers such that $\min\{n,m,k\}\geq 2.$ Next we also set
\begin{equation} \label{eq2.2}
n_1=\max\{n,m,k\}, \quad n_3=\min\{n,m,k\} ,\quad  n_2=n+m+k-n_1-n_3.
\end{equation}
\vskip 2mm
\par
The following Lemma \ref{lemma2.2}-Lemma \ref{lemma2.12} were  proved originally in the Chinese academic paper Yi-Yang\cite{YiYang2011}. For convenience of the readers, we provide their detailed proofs as follows:
\vskip 2mm
\par
\begin{lemma}\label{lemma2.2}
\rm{}\textit{Suppose that $f,$ $g$ and $h$ are non-constant meromorphic functions satisfying \eqref{eq2.1}. If $n_2\geq 4$ and $n_3\geq 3,$ then
\begin{equation} \nonumber
T(r,f)\neq S(r), \quad T(r,g)\neq S(r),\quad T(r,h)\neq S(r),
\end{equation}
where and in what follows, $n_2$ and $n_3$ are defined as in \eqref{eq2.2}, $S(r)$ is any quantity such that $S(r)=o (T(r)),$ as $r\not\in E$ and $r\rightarrow\infty.$ Here $E\subset (0,+\infty)$ is a subset with finite linear measure, and $T(r)=T(r,f)+T(r,g)+T(r,h).$
}
\end{lemma}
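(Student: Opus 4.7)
My plan is to argue by contradiction and reduce each case to a two-term Fermat-type relation with a small right-hand side. Since the hypothesis $n_3\geq 3$, $n_2\geq 4$ is symmetric in $n,m,k$, it suffices to establish $T(r,f)\neq S(r)$; the other two conclusions then follow by relabeling.

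Suppose for contradiction that $T(r,f)=S(r)$. Setting $\varphi:=1-f^n$, one has $T(r,\varphi)\leq nT(r,f)+O(1)=S(r)$, and \eqref{eq2.1} becomes $g^m+h^k=\varphi$. From this relation I would immediately deduce $mT(r,g)\leq kT(r,h)+S(r)$ and $kT(r,h)\leq mT(r,g)+S(r)$. Combined with $T(r)=T(r,f)+T(r,g)+T(r,h)$ and $T(r,f)=S(r)$, this shows that $T(r,g)$ and $T(r,h)$ are each comparable to $T(r)$; in particular the error terms $S(r,g)$ and $S(r,h)$ can be absorbed into $S(r)$ throughout.

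Next I would introduce $G:=g^m/\varphi$, so that $G-1=(g^m-\varphi)/\varphi=-h^k/\varphi$. The $0$-, $1$-, and $\infty$-points of $G$ are, modulo an $S(r)$ contribution from the zeros and poles of $\varphi$, precisely the zeros of $g$, the zeros of $h$, and the poles of $g$, respectively. Applying Nevanlinna's second main theorem to $G$ at the targets $0,1,\infty$, using $T(r,G)=mT(r,g)+S(r)$, and the elementary bounds $\overline{N}(r,G)\leq \overline{N}(r,g)+S(r)$, $\overline{N}(r,1/G)\leq \overline{N}(r,1/g)+S(r)$, and $\overline{N}(r,1/(G-1))\leq \overline{N}(r,1/h)+S(r)$, I obtain
\[
(m-2)T(r,g)\leq T(r,h)+S(r).
\]
Running the same argument with $H:=-h^k/\varphi$ in place of $G$ gives the symmetric estimate $(k-2)T(r,h)\leq T(r,g)+S(r)$, and chaining the two yields $((m-2)(k-2)-1)T(r,g)\leq S(r)$.

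Finally, under $n_3\geq 3$ and $n_2\geq 4$, at most one of $n,m,k$ can equal $3$ (since the middle value $n_2$ is at least $4$), so in particular at most one of $m,k$ equals $3$. Hence $(m-2)(k-2)\geq(3-2)(4-2)=2>1$, which forces $T(r,g)=S(r)$, contradicting the comparability established in the second paragraph. I expect the main delicate point to be the careful bookkeeping in the second main theorem step---verifying that the multiplicity structure of $g^m/\varphi$ and the possible pole/zero cancellations with the small function $\varphi$ really do yield the truncated estimates above with only $S(r)$ error---and consistently identifying $S(r,g)$, $S(r,h)$, $S(r,G)$, $S(r,H)$ with $S(r)$ across the different applications of the second main theorem.
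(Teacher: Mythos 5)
Your argument is correct and follows essentially the same route as the paper: assume $T(r,f)=S(r)$, divide \eqref{eq2.1} by $\varphi=1-f^n$ to obtain the two-term relation in $G=g^m/\varphi$ (the paper's $ag^m$ with $a=1/(1-f^n)$), and apply the second main theorem at $0,1,\infty$ with the same counting-function bounds. The only cosmetic difference is the finish: the paper uses $T(r,h)=\frac{m}{k}T(r,g)+S(r)$ to reach $m\leq 2+\frac{m}{k}$ from a single application, whereas you chain the two symmetric estimates $(m-2)T(r,g)\leq T(r,h)+S(r)$ and $(k-2)T(r,h)\leq T(r,g)+S(r)$; both close under $n_2\geq 4,$ $n_3\geq 3.$
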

\vskip 2mm
\par
\begin{proof} Suppose that $T(r,f)=S(r).$ Then, by \eqref{eq2.1} we have
\begin{equation} \label{eq2.3}
T(r,h^k)=T(r,g^m)+S(r),\quad T(r,f)=S(r,g),\quad T(r,f)=S(r,h),
\end{equation}
which implies that
\begin{equation} \label{eq2.4}
T(r,h)=\frac{m}{k}T(r,g)+S(r,g).
\end{equation}
Noting that $f$ is a non-constant meromorphic function, by rewriting \eqref{eq2.1} we have
\begin{equation} \label{eq2.5}
ag^m+ah^k=1,
\end{equation}
where $a=1/(1-f^n).$ By \eqref{eq2.3}, \eqref{eq2.4}, \eqref{eq2.5} and the second fundamental theorem we have
\begin{equation} \nonumber
\begin{split}
mT(r,g)&=T(r,ag^m )+S(r,g)\\
&\leq \overline{N}(r,ag^m)+\overline{N}\left(r,\frac{1}{ag^m}\right)+\overline{N}\left(r,\frac{1}{ag^m-1}\right)+S(r,g)\\
&=\overline{N}(r,g)+\overline{N}\left(r,\frac{1}{g}\right)+\overline{N}\left(r,\frac{1}{ah^k}\right)+S(r,g)\\
&\leq 2T(r,g)+\overline{N}\left(r,\frac{1}{h}\right)+S(r,g)\leq 2T(r,g)+T(r,h)+S(r,g)\\
&\leq \left(2+\frac{m}{k}\right)T(r,g)+S(r,g),
\end{split}
\end{equation}
this together with $n_2\geq 4,$ $n_3\geq 3$ and \eqref{eq2.2} implies a contradiction. Therefore, $T(r,f)\neq S(r).$ Similarly we have
$T(r,g)\neq S(r)$ and $T(r,h)\neq S(r).$
\end{proof}
\begin{lemma}\label{lemma2.3}
\textit{Suppose that $f,$ $g$ and $h$ are non-constant meromorphic functions satisfying \eqref{eq2.1}. If $n_1\geq 4$ and $n_3\geq 3,$
where $n_1$ and $n_3$ are defined as in \eqref{eq2.2}, then
$f^n,$ $g^m$ and $h^k$ are linearly independent.
}
\end{lemma}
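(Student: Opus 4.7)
The plan is to argue by contradiction. Suppose $f^{n}$, $g^{m}$, $h^{k}$ are linearly dependent over $\Bbb{C}$, so that $c_{1}f^{n}+c_{2}g^{m}+c_{3}h^{k}\equiv 0$ for some constants not all zero. Substituting $h^{k}=1-f^{n}-g^{m}$ from \eqref{eq2.1} and using that none of $f,g,h$ is constant (which forbids any of $f^{n},g^{m},h^{k}$ from being identically constant) reduces, after a short elimination on which of the $c_{i}$ vanish or coincide, to one of two essentially different configurations: \emph{either} (i) $f^{n}=\alpha+\beta g^{m}$ and $h^{k}=\gamma+\delta g^{m}$ with $\alpha,\beta,\gamma,\delta$ nonzero and $\alpha\delta\neq\beta\gamma$, so that the four values $0,-\alpha/\beta,-\gamma/\delta,\infty$ are pairwise distinct; \emph{or} (ii) two of $f^{n},g^{m},h^{k}$ are constant multiples of one another, in which case the symmetry of the hypothesis in $(n,m,k)$ allows me to assume $g^{m}=\lambda h^{k}$ with $\lambda\neq 0,-1$, whence $f^{n}=1-(1+\lambda)h^{k}$.

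In case (i) I would apply Nevanlinna's second fundamental theorem to $G:=g^{m}$ at the four distinct values $0,-\alpha/\beta,-\gamma/\delta,\infty$. The crucial algebraic observation is that $G+\alpha/\beta=\beta^{-1}f^{n}$ and $G+\gamma/\delta=\delta^{-1}h^{k}$, so every zero of the first (respectively second) expression has multiplicity divisible by $n$ (respectively $k$); this yields the sharpened bounds $\overline{N}(r,1/(G+\alpha/\beta))\leq(m/n)T(r,g)$ and $\overline{N}(r,1/(G+\gamma/\delta))\leq(m/k)T(r,g)$. Inserted together with $\overline{N}(r,G),\overline{N}(r,1/G)\leq T(r,g)$ into the second fundamental theorem, these produce $(2-2/m-1/n-1/k)\,mT(r,g)\leq S(r,g)$, whose bracket is at least $2/3$ under $n_{3}\geq 3$, contradicting $T(r,g)\to\infty$. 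In case (ii) I would apply the second fundamental theorem to $H:=h^{k}$ at $0,1/(1+\lambda),\infty$. Now $H-1/(1+\lambda)=-(1+\lambda)^{-1}f^{n}$ forces zeros to have multiplicity $\geq n$, so $\overline{N}(r,1/(H-1/(1+\lambda)))\leq(k/n)T(r,h)$; and $g^{m}=\lambda h^{k}$ forces every zero and pole of $h$ to have multiplicity divisible by $m/\gcd(m,k)$, hence $\overline{N}(r,h),\overline{N}(r,1/h)\leq(\gcd(m,k)/m)\,T(r,h)$. These three refined bounds in the second fundamental theorem give $(1-2\gcd(m,k)/(mk)-1/n)\,kT(r,h)\leq S(r,h)$, whose bracket is strictly positive under $n_{3}\geq 3$, $n_{1}\geq 4$ by a finite numerical check: the tightest instance is $m=k=3$, which by $n_{1}\geq 4$ forces $n\geq 4$ and yields $1-2/3-1/4=1/12>0$. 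That is the desired contradiction.

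The principal obstacle I expect is the preliminary case analysis: systematically ruling out the various spurious sub-configurations of the linear dependence so that exactly the two clean cases (i) and (ii) above survive, and making sure that no hidden scenario forces one of $f,g,h$ to be constant in an unaccounted way. A secondary but essential subtlety is extracting the multiplicity savings from the proportionality $g^{m}=\lambda h^{k}$ in case (ii) sharply enough to close the numerical margin at the tight instance $m=k=3$, where the hypothesis $n_{1}\geq 4$ is indispensable; away from that triple both inequalities leave a comfortable margin and the argument is routine.
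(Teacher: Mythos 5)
Your proposal is correct, but it takes a more granular route than the paper. The paper's proof orders the exponents so that $n=n_1\geq 4$, $m=n_2$, $k=n_3\geq 3$, takes a dependence $af^{n}+bg^{m}+ch^{k}=0$ with (``say'') $c\neq 0$, eliminates $h^{k}$ via \eqref{eq2.1} to obtain $(1-a/c)f^{n}+(1-b/c)g^{m}=1$, and then applies the second fundamental theorem to $(1-b/c)g^{m}$ at the three values $0,1,\infty$, using that the $1$-points are zeros of $f$; this yields $m\leq 2+m/n+o(1)$, which contradicts $n\geq 4$, $m\geq 3$. You instead split the dependence into two exhaustive configurations: the generic affine case, where you run a four-value second fundamental theorem on $g^{m}$ at $0,-\alpha/\beta,-\gamma/\delta,\infty$ (your bracket $2-2/m-1/n-1/k\geq 2/3$ shows this branch needs only $n_3\geq 3$), and the proportional case $g^{m}=\lambda h^{k}$, where you add the $\gcd$-multiplicity saving $\overline{N}(r,h)+\overline{N}(r,1/h)\leq \tfrac{2\gcd(m,k)}{m}T(r,h)+O(1)$ and check positivity of $1-2\gcd(m,k)/(mk)-1/n$, with the tight instance $m=k=3$, $n\geq 4$. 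Notably, your case (ii) covers exactly the configuration that the paper's ``say $c\neq 0$'' normalization passes over (namely when the only dependence relations tie the two powers with the larger exponents, so that no relation with nonzero coefficient on $h^{n_3}$ is available), so your argument is in this respect more complete; the cost is the longer preliminary case elimination and the extra $\gcd$ bookkeeping, whereas the paper's elimination-to-a-two-term-equation argument is shorter and isolates where $n_1\geq 4$ is used. Both proofs rest on the same engine — the second fundamental theorem combined with the multiplicity savings coming from high powers — and your numerical checks (the $2/3$ bound in case (i), the $1/12$ margin at $m=k=3$, $n=4$ in case (ii)) are accurate, so I see no gap in substance, only the routine writing-out of the case elimination you yourself flag.
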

\begin{proof} Without loss of generality, we suppose that $n_1=n,$ $n_2=m$ and $n_3=k.$ Then, by the assumption of Lemma \ref{lemma2.3} we have
$n\geq 4$ and $n\geq m\geq k\geq 3.$ Suppose that $f^n,$ $g^m$ and $h^k$ are linearly dependent. Then, there exist three constants $a,$ $b$ and $c$ satisfying $|a|+|b|+|c|>0,$ say $c\neq 0,$ such that  $af^n+bg^m+ch^k=0.$ Combining this with \eqref{eq2.1} we have
\begin{equation} \label{eq2.6}
\begin{split}
\left(1-\frac{a}{c}\right)f^n+\left(1-\frac{b}{c}\right)g^m=1.
\end{split}
\end{equation}
Noting that $f$ and $g$ are non-constant meromorphic functions, we have by \eqref{eq2.6} that $a\neq c$ and $b\neq c.$ Therefore, by \eqref{eq2.6}
and the second fundamental theorem we have
\begin{equation} \nonumber
 T(r,f)=\frac{m}{n}T(r,g)+O(1)
\end{equation}
 and
 \begin{equation} \nonumber
\begin{split}
mT(r,g)&=T\left(r, \left(1-\frac{b}{c}\right)g^m\right)+S(r,g)\\
&\leq \overline{N}\left(r,\left(1-\frac{b}{c}\right)g^m\right)+\overline{N}\left(r,\frac{1}{ \left(1-\frac{b}{c}\right)g^m }\right)+\overline{N}\left(r,\frac{1}{ \left(1-\frac{b}{c}\right)g^m-1}\right)\\
&\quad +S(r,g)=\overline{N}(r,g)+\overline{N}\left(r,\frac{1}{g}\right)+\overline{N}\left(r,\frac{1}{\left(1-\frac{a}{c}\right)f^n }\right)+S(r,g)\\
&\leq 2T(r,g)+\overline{N}\left(r,\frac{1}{f}\right)+S(r,g)\leq 2T(r,g)+T(r,f)+S(r,g)\\
&\leq \left(2+\frac{m}{n}\right)T(r,g)+S(r,g),
\end{split}
\end{equation}
 which together with $n\geq 4$ and $n\geq m\geq k\geq 3$  gives a contradiction.
 \end{proof}
The following result is due to Nevanlinna \cite{Nevanlinna1929}:
\begin{lemma}\rm{(\cite{Nevanlinna1929})}\label{lemma2.4}
\textit{Suppose that $f_1,$ $f_2,$ $\cdots,$ $f_n$ are $n$ linearly independent meromorphic functions
such that $\sum\limits_{j=1}^nf_j=1.$ Then, for $1\leq j\leq n,$
 \begin{equation} \nonumber
 T(r,f_j)\leq \sum\limits_{k=1}^nN\left(r,\frac{1}{f_k}\right)+N(r,f_j)+N(r,D)-\sum\limits_{k=1}^nN\left(r,f_k\right)-N\left(r,\frac{1}{D}\right)+S(r),
 \end{equation}
where $D=W(f_1,f_2,\cdots, f_n)$ is the Wronskian determinant of $f_1,$ $f_2,$ $\cdots,$ $f_n$ and $S(r)$ is defined in a similar way to Lemma \ref{lemma2.2}.
}
\end{lemma}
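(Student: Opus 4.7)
The plan is to apply Cramer's rule to the linear system obtained by repeatedly differentiating the identity $\sum_{j=1}^n f_j=1$, and then to estimate the resulting ratio of determinants using the lemma on the logarithmic derivative together with the first main theorem. Since $f_1,\ldots,f_n$ are linearly independent, $D=W(f_1,\ldots,f_n)\not\equiv 0$. Differentiating the relation successively for $\nu=0,1,\ldots,n-1$ yields $\sum_{k=1}^n f_k^{(\nu)}=\delta_{\nu,0}$. Writing $f_k^{(\nu)}=f_k\tau_{\nu k}$ with $\tau_{\nu k}=f_k^{(\nu)}/f_k$ (so $\tau_{0k}\equiv 1$), this becomes a linear system $\sum_k \tau_{\nu k}f_k=\delta_{\nu 0}$ with coefficient matrix $T=(\tau_{\nu k})$, and a direct column-factorisation yields the key identity $D=\bigl(\prod_k f_k\bigr)\det(T)$.

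Cramer's rule then gives $f_j=\pm\det(T_j)\cdot\prod_k f_k/D$, where $T_j$ is the $(n-1)\times(n-1)$ minor of $T$ obtained by deleting the top row and the $j$-th column. Every entry of $T_j$ is a logarithmic derivative $f_k^{(\nu)}/f_k$ with $\nu\geq 1$, so the lemma on the logarithmic derivative gives $m(r,\det(T_j))=S(r)$. Expanding $\det(T)$ along its constant top row shows analogously that $m(r,\det(T))=S(r)$.

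From the Cramer identity one obtains $m(r,f_j)\leq m(r,\det(T_j))+m(r,1/\det(T))=S(r)+m(r,1/\det(T))$. The first main theorem then yields $m(r,1/\det(T))=N(r,\det(T))-N(r,1/\det(T))+S(r)$, and since the signed difference $N(r,F)-N(r,1/F)$ is additive under multiplication of meromorphic functions, the identity $\det(T)=D/\prod_k f_k$ converts this into
\[
N(r,\det(T))-N(r,1/\det(T))=N(r,D)-N(r,1/D)+\sum_k N(r,1/f_k)-\sum_k N(r,f_k).
\]
Adding $N(r,f_j)$ to both sides of the resulting bound on $m(r,f_j)$ gives exactly the claimed inequality for $T(r,f_j)$.

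The main obstacle is bookkeeping rather than depth: one must verify the signs in Cramer's expansion, expand the two determinants so that each summand is a product of logarithmic derivatives (justifying the uniform application of the logarithmic derivative lemma), and track carefully the additivity of $N(r,F)-N(r,1/F)$ under products. None of these steps is deep, but together they constitute the bulk of the argument.
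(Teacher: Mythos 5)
Your proof is correct: the factorisation $D=\bigl(\prod_k f_k\bigr)\det(T)$, Cramer's rule on the differentiated system, the logarithmic derivative lemma for $m(r,\det T_j)$ and $m(r,\det T)$, the first main theorem, and the exact additivity of $N(r,F)-N(r,1/F)$ (it is the integrated pole-minus-zero divisor) assemble into precisely the claimed inequality. The paper itself gives no proof of Lemma \ref{lemma2.4}, citing Nevanlinna's 1929 monograph, and your argument is exactly that classical Wronskian--Cramer proof, so there is nothing to add.
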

\vskip 2mm
\par
Let $f,$ $g,$ $h$ be three non-constant meromorphic functions satisfying \eqref{eq2.1}. Then, the Wronskian determinant of $f^n,$ $g^m$ and  $h^k$ is such that
\begin{equation} \label {eq2.7}
 D=W(f^n, g^m, h^k)=\left|\aligned (f^n)' &\quad  (g^m)'\\
                                   (f^n)'' &\quad  (g^m)''
                                   \endaligned
                                  \right|= \left|\aligned (g^m)' &\quad  (h^k)'\\
                                  (g^m)'' &\quad  (h^k)''\endaligned\right| = \left|\aligned (h^k)' &\quad  (f^n)'\\
                                  (h^k)'' &\quad  (f^n)''\endaligned\right|.
   \end{equation}
\vskip 2mm
\par
By Lemmas \ref{lemma2.3} and \ref{lemma2.4} we can get the following result:
\vskip 2mm
\par
\begin{lemma}\label{lemma2.5} \rm{} \textit{Let $n_1\geq 4$ and $n_3\geq 3.$ If $f,$ $g,$ $h$ are three non-constant meromorphic functions satisfying \eqref{eq2.1}, then
 \begin{equation} \nonumber
 \begin{split}
 &\quad nT(r,f)+mT(r,g)+kT(r,h)\\
 &\leq 3\left(N\left(r,\frac{1}{f^n}\right)+N\left(r,\frac{1}{g^m}\right)+N\left(r,\frac{1}{h^k}\right) \right)+3N(r,D)-3N\left(r,\frac{1}{D}\right)\\
 &\quad -2(N(r,f^n)+N(r,g^m)+N(r,h^k))+S(r),
  \end{split}
 \end{equation}
where $D= W(f^n, g^m, h^k)$ is in \eqref{eq2.7} and $S(r)$ is defined as in Lemma \ref{lemma2.2}.
}
\end{lemma}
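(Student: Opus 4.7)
The plan is to apply Nevanlinna's three-function inequality (Lemma \ref{lemma2.4}) to the triple $(f_1, f_2, f_3) = (f^n, g^m, h^k)$ and then add the three resulting inequalities. The defining relation \eqref{eq2.1} supplies $f_1 + f_2 + f_3 = 1$, and the hypothesis $n_1 \geq 4$, $n_3 \geq 3$ of the present lemma is precisely the hypothesis of Lemma \ref{lemma2.3}, which guarantees that $f^n$, $g^m$, $h^k$ are linearly independent. The Wronskian $W(f^n, g^m, h^k)$ required by Lemma \ref{lemma2.4} coincides with the determinant $D$ introduced in \eqref{eq2.7}, so no new notation is needed.

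First I would record, for each $j \in \{1, 2, 3\}$, the estimate from Lemma \ref{lemma2.4},
\[
T(r, f_j) \leq \sum_{k=1}^{3} N\!\left(r, \frac{1}{f_k}\right) + N(r, f_j) + N(r, D) - \sum_{k=1}^{3} N(r, f_k) - N\!\left(r, \frac{1}{D}\right) + S(r).
\]
Summing over $j = 1, 2, 3$ would then give
\[
\sum_{j=1}^{3} T(r, f_j) \leq 3 \sum_{k=1}^{3} N\!\left(r, \frac{1}{f_k}\right) - 2 \sum_{k=1}^{3} N(r, f_k) + 3 N(r, D) - 3 N\!\left(r, \frac{1}{D}\right) + S(r),
\]
because the three terms $N(r, f_j)$ combine with the three copies of $-\sum_{k} N(r, f_k)$ to leave the net coefficient $-2$ in front of $N(r, f_k)$. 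Using $T(r, f^n) = nT(r, f) + O(1)$ on the left (and the analogous identities for $g^m$ and $h^k$) and substituting $f^n$, $g^m$, $h^k$ back for $f_1$, $f_2$, $f_3$ on the right would yield exactly the claimed bound.

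The argument has no real obstacle: Lemma \ref{lemma2.4} does the analytic work, Lemma \ref{lemma2.3} supplies the required linear independence, and what remains is the bookkeeping of the counting functions, in particular tracking the coefficients $3$, $-2$, $3$, $-3$. The only subtlety worth flagging is that the error term $S(r)$ is meaningful only if at least one of $T(r, f)$, $T(r, g)$, $T(r, h)$ is not itself an $S(r)$ quantity; this non-degeneracy is provided by Lemma \ref{lemma2.2} under the slightly stronger assumption $n_2 \geq 4$, which will be satisfied in every later application of Lemma \ref{lemma2.5} in the paper.
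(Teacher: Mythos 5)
Your proposal is correct and matches the paper's intended argument: the paper derives Lemma \ref{lemma2.5} precisely by combining Lemma \ref{lemma2.3} (linear independence of $f^n,$ $g^m,$ $h^k$ under $n_1\geq 4,$ $n_3\geq 3$) with Lemma \ref{lemma2.4} applied to the triple $(f^n, g^m, h^k)$ and summing the three inequalities, exactly as you do. Your coefficient bookkeeping ($3,$ $-2,$ $3,$ $-3$) and the use of $T(r,f^n)=nT(r,f)+O(1)$ are both right, so there is nothing to add.
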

\vskip 2mm
\par
For proving the following lemma, we first introduce the following notations: Let $f,$ $g,$ $h$ be three non-constant meromorphic functions satisfying \eqref{eq2.1}, and let $z_0\in\Bbb{C}$ be a point such that $z_0$ is either a zero  or a pole of any one of the three meromorphic functions $f,$ $g$ and $h.$ Then, the Laurent developments of $f,$ $g$ and $h$ at the point $z_0$ are
\begin{equation} \label {eq2.8}
 f(z)=a_p(z-z_0)^p+a_{p+1}(z-z_0)^{p+1}+\cdots,
  \end{equation}
\begin{equation}\label{eq2.9}
 g(z)=b_q(z-z_0)^q+b_{q+1}(z-z_0)^{q+1}+\cdots,
  \end{equation}
and
\begin{equation}\label {eq2.10}
 h(z)=c_t(z-z_0)^t+c_{t+1}(z-z_0)^{t+1}+\cdots
  \end{equation}
respectively, where $a_p,$ $b_q$ and $c_t$ are nonzero constants, and $p,$ $q,$ $t$ are three integers such that $|p|+|q|+|t|>0.$ Next we also let
\begin{equation}\label {eq2.11}
\begin{split}
A(z_0)&=3\left(U\left(\frac{1}{f^n}, z_0\right)+U\left(\frac{1}{g^m},z_0\right)+U\left(\frac{1}{h^k},z_0\right)\right)+3U(D,z_0)\\
&\quad-3U\left(\frac{1}{D},z_0\right)-2(U(f^n,z_0)+U(g^m,z_0)+U(h^k,z_0)),
 \end{split}
  \end{equation}
where $D= W(f^n, g^m, h^k)$ is in \eqref{eq2.7}.
\vskip 2mm
\par
Let $F$ and $G$ be non-constant meromorphic functions, and let $z_0\in\Bbb{C}$ be a point. Next we define
$$\overline{U}\left(\frac{1}{F},\frac{1}{G}, z_0\right)=
\left\{ \aligned
1, & \quad \text{if $z_0$ is a common zero of $F$ and $G,$}\\
0, &  \quad \text{if $z_0$ is not a common zero of $F$ and $G.$}
 \endaligned
\right.
$$
\vskip 2mm
\par
By using Lemma \ref{lemma2.1}, we will prove the next several results:
\vskip 2mm
\par
\begin{lemma}\label{lemma2.6} \rm{} \textit{Suppose that $\max\{p,q,t\}>0$ and $\min\{p,q,t\}\geq 0.$ Then
\begin{equation}\label{eq2.12}
\begin{split}
A(z_0)&\leq 6\overline{U}\left(\frac{1}{f},z_0\right)+6\overline{U}\left(\frac{1}{g},z_0\right)+6\overline{U}\left(\frac{1}{h},z_0\right)
-3\overline{U}\left(\frac{1}{f},\frac{1}{g}, z_0\right)\\
&\quad-3\overline{U}\left(\frac{1}{g},\frac{1}{h},z_0\right)
-3\overline{U}\left(\frac{1}{h},\frac{1}{f},z_0\right).
 \end{split}
  \end{equation}
}
\end{lemma}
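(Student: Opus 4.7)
The plan is to exploit the hypothesis $\min\{p,q,t\}\geq 0$ to eliminate all pole contributions in $A(z_0)$, and then to split on the number of strictly positive entries among $p,q,t$ and apply parts (i) and (ii) of Lemma \ref{lemma2.1} to bound $U\!\left(\frac{1}{D},z_0\right)$ from below.

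Since none of $f,g,h$ has a pole at $z_0$, the three functions $f^n,g^m,h^k$ are holomorphic at $z_0$, and so is $D$, which by \eqref{eq2.7} is a polynomial in their first and second derivatives. Hence $U(f^n,z_0)=U(g^m,z_0)=U(h^k,z_0)=U(D,z_0)=0$, and \eqref{eq2.11} reduces to
\[
A(z_0)=3\bigl(np+mq+kt\bigr)-3\,U\!\left(\frac{1}{D},z_0\right),
\]
where only those summands with positive $p$, $q$, or $t$ actually contribute. Let $\nu$ denote the number of positive entries in $\{p,q,t\}$; by hypothesis $\nu\in\{1,2,3\}$, but $\nu=3$ cannot occur, since evaluating \eqref{eq2.1} at $z_0$ would then force $0=1$.

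For $\nu=1$, I may assume $p\geq 1$ and $q=t=0$, so $np\geq 2$ because $n\geq 2$. Reading $D$ via \eqref{eq2.7} as the two-variable Wronskian of $F_1=f^n$ and $F_2=g^m$, Lemma \ref{lemma2.1}(i) gives $U(1/D,z_0)\geq np-2$, hence $A(z_0)\leq 3np-3(np-2)=6$, matching the right-hand side $6\overline{U}(1/f,z_0)=6$ of \eqref{eq2.12}. For $\nu=2$, I may assume $p,q\geq 1$ and $t=0$, so $np,mq\geq 2$; the same $F_1,F_2$ and Lemma \ref{lemma2.1}(ii) yield $U(1/D,z_0)\geq np+mq-3$, hence $A(z_0)\leq 3(np+mq)-3(np+mq-3)=9$, matching the right-hand side $6+6-3=9$ of \eqref{eq2.12}. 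The remaining sub-configurations for $\nu\in\{1,2\}$ are handled by symmetry of \eqref{eq2.12} in $(f,g,h)$. No step is genuinely difficult; the one place needing care is recognizing that \eqref{eq2.7} reinterprets the three-variable Wronskian $D$ as a $2\times 2$ Wronskian of the form needed by Lemma \ref{lemma2.1}.
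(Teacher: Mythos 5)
Your proposal is correct and follows essentially the same route as the paper: rule out the case that all of $p,q,t$ are positive via \eqref{eq2.1}, then split into one or two positive exponents and apply Lemma \ref{lemma2.1}(i) and (ii) respectively to the $2\times 2$ form of $D$ from \eqref{eq2.7}, giving the bounds $6$ and $9$ that match the right-hand side of \eqref{eq2.12}. Your added remarks (vanishing of the pole terms, the check $np,mq\geq 2$ needed for Lemma \ref{lemma2.1}, and the impossibility of a common zero of all three functions) are exactly the implicit steps in the paper's argument.
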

\vskip 2mm
\par
\begin{proof} \, Since $f,$ $g,$ $h$ are non-constant meromorphic functions satisfying \eqref{eq2.1}, we deduce by \eqref{eq2.8}- \eqref{eq2.10} and the assumption of Lemma \ref{lemma2.6} that at least one of $p,q,t$ is equal to $0,$ hence
$\min\{p,q,t\}=0.$ We consider the following two cases:
\vskip 2mm
\par {\bf Case 1.} \, Suppose that there exists only one of of $p,q,t$ is a positive integer, say $p>0, q=t=0.$ Then, by \eqref{eq2.7} and Lemma \ref{lemma2.1}(i) we have \begin{equation}\label{eq2.13}
U\left(\frac{1}{D},z_0\right)\geq np-2.
  \end{equation}
By \eqref{eq2.11} and \eqref{eq2.13} we have
\begin{equation}\nonumber
A(z_0)\leq 3np-3(np-2)\leq 6\overline{U}\left(\frac{1}{f},z_0\right),
  \end{equation}
which implies that the conclusion of Lemma \ref{lemma2.6} is valid.
\vskip 2mm
\par {\bf Case 2.} \, Suppose that exactly two of $p,q,t$ are positive integers, say $p>0, q>0$ and $t=0.$ Then, by \eqref{eq2.7} and Lemma \ref{lemma2.1}(ii) we have
 \begin{equation}\label{eq2.14}
U\left(\frac{1}{D},z_0\right)\geq np+mq-3.
  \end{equation}
By \eqref{eq2.11} and \eqref{eq2.14} we have
\begin{equation}\nonumber
A(z_0)\leq 3(np+mq)-3(np+mq-3)
\leq 6\overline{U}\left(\frac{1}{f},z_0\right)+6\overline{U}\left(\frac{1}{g},z_0\right)-3\overline{U}\left(\frac{1}{f}, \frac{1}{g},z_0\right),
  \end{equation}
which reveals the conclusion of Lemma \ref{lemma2.6}.
\end{proof}
\vskip 2mm
\par
\begin{lemma}\label{lemma2.7} \rm{}\textit{Suppose that $\max\{p,q,t\}>0,$ $\min\{p,q,t\}<0$ and $\min\{m,n,k\}\geq 6.$  Then
\begin{equation}\label{eq2.15}
A(z_0)\leq 3\overline{U}\left(\frac{1}{f},z_0\right)+3\overline{U}\left(\frac{1}{g},z_0\right)+3\overline{U}\left(\frac{1}{h},z_0\right).
 \end{equation}
 }
 \end{lemma}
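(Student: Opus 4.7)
The plan is to analyze the local behaviour at $z_0$ using the functional equation $f^n+g^m+h^k=1$. I would first observe that the joint hypothesis $\max\{p,q,t\}>0$, $\min\{p,q,t\}<0$ combined with $f^n+g^m+h^k=1$ forces \emph{exactly two} of $p,q,t$ to be strictly negative and the remaining one to be strictly positive: if only one of $p,q,t$ were negative, the corresponding pole of $f^n$, $g^m$ or $h^k$ could not be cancelled by the other two (which would be holomorphic at $z_0$); and all three being negative would conflict with $\max\{p,q,t\}>0$. By the symmetry in \eqref{eq2.7} it then suffices to treat the case $p>0$, $q<0$, $t<0$.

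Setting $P:=-mq$ and $N:=-kt$, the cancellation of the principal parts of $g^m+h^k=1-f^n$ at $z_0$ forces $P=N$, since otherwise $g^m+h^k$ would retain a genuine pole. Because $q,t\leq -1$ and $\min\{m,n,k\}\geq 6$, this gives $P\geq\max\{m,k\}\geq 6$; this is the only place where the numerical hypothesis $\min\{m,n,k\}\geq 6$ is consumed.

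The heart of the argument is an estimate of $U(D,z_0)-U(1/D,z_0)$. Using the representation $D=(g^m)'(h^k)''-(h^k)'(g^m)''$ from \eqref{eq2.7}, I would exploit that differentiating $f^n+g^m+h^k=1$ gives $(g^m)^{(j)}+(h^k)^{(j)}=-(f^n)^{(j)}=O((z-z_0)^{np-j})$ for $j=1,2$. Substituting $(h^k)^{(j)}=-(g^m)^{(j)}+O((z-z_0)^{np-j})$ into $D$ produces a cancellation of the leading singular terms $\pm(g^m)'(g^m)''$, leaving $D=O((z-z_0)^{np-P-3})$. Consequently $U(D,z_0)-U(1/D,z_0)\leq P-np+3$.

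With this bound in hand the conclusion drops out by arithmetic: recalling $U(1/f^n,z_0)=np$, $U(1/g^m,z_0)=U(1/h^k,z_0)=0$, $U(f^n,z_0)=0$, and $U(g^m,z_0)=U(h^k,z_0)=P$, substitution into \eqref{eq2.11} gives $A(z_0)\leq 3np+3(P-np+3)-4P=9-P\leq 3$, which matches $3\overline{U}(1/f,z_0)+3\overline{U}(1/g,z_0)+3\overline{U}(1/h,z_0)=3$ (since $f$ vanishes while $g,h$ have poles at $z_0$). The main obstacle is the combinatorial bookkeeping inside $D$: one must carefully verify that the two-term cancellation stemming from $(f^n)^{(j)}+(g^m)^{(j)}+(h^k)^{(j)}=0$ actually kills the nominal pole order $2P+3$ of $D$ all the way down to $P-np+3$; once this is in hand, the numerical slack coming from $\min\{m,n,k\}\geq 6$ closes the inequality.
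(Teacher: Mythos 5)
Your proposal is correct and follows essentially the same route as the paper: reduce to the configuration $p>0$, $q<0$, $t<0$ with $m|q|=k|t|$, establish $U(D,z_0)-U\left(\frac{1}{D},z_0\right)\leq m|q|-np+3$, and plug into \eqref{eq2.11} to get $A(z_0)\leq 9-m|q|\leq 3$. The only difference is cosmetic: where the paper simply cites Lemma \ref{lemma2.1}(vi) for the key Wronskian estimate, you re-derive that estimate directly from \eqref{eq2.7} via the cancellation coming from differentiating the functional equation, which is a valid inline proof of the same inequality.
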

 \vskip 2mm
\par
\begin{proof} Since $f,$ $g,$ $h$ are non-constant meromorphic functions satisfying \eqref{eq2.1}, we deduce by \eqref{eq2.8}-\eqref{eq2.10} and the assumption of Lemma \ref{lemma2.7} that only one of $p,$ $q,$ $t$ is a positive integer, the other two of  $p,q,t$ are negative integers, say $p>0,$ $q<0,$ $t<0$ and $m|q|=k|t|.$ This together with \eqref{eq2.7} and Lemma \ref{lemma2.1}(vi) gives
\begin{equation}\label{eq2.16}
U(D,z_0)-U\left(\frac{1}{D},z_0\right)\leq m|q|-np+3.
  \end{equation}
  By \eqref{eq2.11}, \eqref{eq2.16} and the assumption $\min\{m,n,k\}\geq 6$ we have
  \begin{equation}\nonumber
A(z_0)\leq 3np+3(m|q|-np+3)-4m|q|=9-m|q|\leq 3\overline{U}\left(\frac{1}{f},z_0\right),
  \end{equation}
which reveals the conclusion of Lemma \ref{lemma2.7}.
\end{proof}
\vskip 2mm
\par
\begin{lemma}\label{lemma2.8}  \rm{}\textit{Suppose that $\max\{p,q,t\}=0,$ $\min\{p,q,t\}<0$ and $\min\{m,n,k\}\geq 6.$  Then
\begin{equation}\label{eq2.17}
A(z_0)\leq 0.
 \end{equation}
 }
 \end{lemma}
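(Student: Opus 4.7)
The plan is to exploit the hypothesis $\max\{p,q,t\}=0$---which forces $U(1/f^n,z_0)=U(1/g^m,z_0)=U(1/h^k,z_0)=0$ and hence reduces $A(z_0)$ to $3U(D,z_0)-3U(1/D,z_0)-2(U(f^n,z_0)+U(g^m,z_0)+U(h^k,z_0))$---together with the pole cancellation forced by $f^n+g^m+h^k=1$. Since $\min\{p,q,t\}<0$ and the sum is finite at $z_0$, a single uncancelled pole is impossible, so at least two of $p,q,t$ must be strictly negative. I split accordingly into the case where exactly two are negative and the case where all three are negative.

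In the first case, without loss of generality $p=0$ and $q,t<0$. Holomorphy of $g^m+h^k=1-f^n$ at $z_0$ forces $m|q|=k|t|=:d$, and both $(g^m)'+(h^k)'=-(f^n)'$ and $(g^m)''+(h^k)''=-(f^n)''$ are holomorphic at $z_0$. Substituting $(h^k)^{(j)}=-(g^m)^{(j)}-(f^n)^{(j)}$ for $j=1,2$ into the expression $D=(g^m)'(h^k)''-(g^m)''(h^k)'$ from \eqref{eq2.7} collapses it to $(g^m)''(f^n)'-(g^m)'(f^n)''$, which is exactly the quantity of Lemma~\ref{lemma2.1} applied to the pair $(g^m,f^n)$. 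Since $g^m$ has pole of order $d$ at $z_0$ while $f^n$ is holomorphic and nonzero there, Lemma~\ref{lemma2.1}(v) gives $U(D,z_0)\le d+2$, and hence $A(z_0)\le 3(d+2)-2(d+d)=6-d\le 6-\min\{m,n,k\}\le 0$, as required.

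In the second case, set $d_1=n|p|,\ d_2=m|q|,\ d_3=k|t|$. Pole cancellation in $f^n+g^m+h^k=1$ forces either all three orders equal ($d_1=d_2=d_3=:d$) or the two largest equal and the third strictly smaller, say $d_1=d_2=d>d_3$. In the second subcase, $f^n+g^m=1-h^k$ has pole of order $d_3$, and the substitution trick from the first case rewrites $D$ as $(f^n)''(h^k)'-(f^n)'(h^k)''$; Lemma~\ref{lemma2.1}(iv) applied to the pair $(f^n,h^k)$, combined with the Laurent cancellations $a_i+b_i=0$ for $0\le i\le d-d_3-1$ dictated by $f^n+g^m=1-h^k$, yields the required estimate. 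In the all-equal subcase, Lemma~\ref{lemma2.1}(iii) applied to $(f^n,g^m)$ gives only $U(D,z_0)\le 2d+2$, which must be improved to $U(D,z_0)\le 2d$: I would expand $D$ at $z_0$ explicitly and use the combined algebraic constraints---each of $f^n,g^m,h^k$ being a perfect power together with $a_i+b_i+c_i=0$ for $0\le i\le d-1$---to verify that the coefficients of $(z-z_0)^{-2d-2}$ and $(z-z_0)^{-2d-1}$ in $D$ both vanish, giving $A(z_0)\le 6d-6d=0$.

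The main obstacle is this two-order sharpening of Lemma~\ref{lemma2.1}(iii) in the all-equal subcase. A direct expansion shows that the coefficient of $(z-z_0)^{-2d-2}$ in $D$ equals $d(d-1)(a_0b_1-a_1b_0)$, so the task reduces to exhibiting enough algebraic relations among the Laurent coefficients of $f,g,h$ at $z_0$---coming jointly from the perfect-power form and from the $d$ linear relations imposed by the Fermat equation---to force both this coefficient and the next one to vanish. The hypothesis $\min\{m,n,k\}\ge 6$ enters precisely to provide the numerical slack needed for the resulting bound on $A(z_0)$ to be nonpositive.
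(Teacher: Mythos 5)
Your first case is exactly the paper's proof: under the hypotheses, pole cancellation in $f^n+g^m+h^k=1$ forces (after relabelling) $p=0$, $q,t<0$ with $m|q|=k|t|=d$; the identity \eqref{eq2.7} lets one view $D$ as $(g^m)''(f^n)'-(g^m)'(f^n)''$, Lemma \ref{lemma2.1}(v) applied to the pair $(g^m,f^n)$ gives $U(D,z_0)\le d+2$, and then $A(z_0)\le 3(d+2)-4d=6-d\le 0$ since $d\ge\min\{m,n,k\}\ge 6$. That is the whole lemma.

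The rest of your proposal, and in particular the ``main obstacle'' you identify, rests on a misreading of the hypothesis: $\max\{p,q,t\}=0$ means that at least one of $p,q,t$ is exactly $0$, so the configuration in which all three of $p,q,t$ are negative is impossible here (its maximum would be negative). Hence your second case is vacuous, and the two-order sharpening of Lemma \ref{lemma2.1}(iii) that you say you would need (improving $U(D,z_0)\le 2d+2$ to $2d$ when $n|p|=m|q|=k|t|=d$) is not required for Lemma \ref{lemma2.8} at all. It is also not something you could expect to prove in general: the all-negative configurations are precisely the content of Lemmas \ref{lemma2.9} and \ref{lemma2.10}, where the paper only obtains $A(z_0)\le 6$ (equal orders) and $A(z_0)\le 9-(m|q|-k|t|)$ (unequal orders), and those weaker bounds are what get fed into the proof of Lemma \ref{lemma2.11}. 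So your argument for the only case that actually occurs is correct and identical in substance to the paper's; you should simply delete the phantom case rather than try to close the gap you perceived there.
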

\vskip 2mm
\par
\begin{proof} Since $f,$ $g,$ $h$ are non-constant meromorphic functions satisfying \eqref{eq2.1}, we deduce by \eqref{eq2.8}- \eqref{eq2.10} and the assumption of Lemma \ref{lemma2.8} that there exists only one of $p,q,t$ that is equal to zero, the other two of  $p,q,t$ are negative integers, say $p=0,$ $q<0,$ $t<0$ and $m|q|=k|t|.$ This together with \eqref{eq2.7} and  Lemma \ref{lemma2.1}(v) gives
\begin{equation}\label{eq2.18}
U(D,z_0)\leq m|q|+2.
  \end{equation}
  By \eqref{eq2.11}, \eqref{eq2.18} and the assumption $\min\{m,n,k\}\geq 6$ we have
  \begin{equation}\nonumber
A(z_0)\leq 3(m|q|+2)-4m|q|=6-m|q|\leq 0,
  \end{equation}
which reveals the conclusion of Lemma \ref{lemma2.8}.
\end{proof}
\vskip 2mm
\par
\begin{lemma}\label{lemma2.9}  \rm{}
\textit{
Suppose that $\max\{p,q,t\}<0$ and
$$\max\{n|p|,m|q|,k|t|\}=\min\{n|p|,m|q|,k|t|\}.$$  Then
\begin{equation}\label{eq2.19}
A(z_0)\leq 6.
 \end{equation}
 }
 \end{lemma}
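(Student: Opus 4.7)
The hypothesis $\max\{p,q,t\}<0$ says that $z_0$ is a pole of each of $f$, $g$, $h$, so
\[
U\!\left(\tfrac{1}{f^n},z_0\right)=U\!\left(\tfrac{1}{g^m},z_0\right)=U\!\left(\tfrac{1}{h^k},z_0\right)=0.
\]
The second hypothesis $\max\{n|p|,m|q|,k|t|\}=\min\{n|p|,m|q|,k|t|\}$ forces all three of these numbers to coincide. Setting $s:=n|p|=m|q|=k|t|$, we then have $U(f^n,z_0)=U(g^m,z_0)=U(h^k,z_0)=s$. I would begin the proof by recording these observations.

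Next, since $f^n$ and $g^m$ both have a pole of order exactly $s$ at $z_0$, Lemma \ref{lemma2.1}(iii), applied to the Wronskian expression for $D$ in \eqref{eq2.7} with $F_1=f^n$ and $F_2=g^m$, yields
\[
U(D,z_0)\leq 2s+2.
\]
Using that $U(1/D,z_0)\geq 0$ and substituting the equalities above into the definition \eqref{eq2.11} of $A(z_0)$, the first group of three zero-counting terms drops out, and one obtains
\[
A(z_0) \leq 3(2s+2) \;-\; 2(s+s+s) \;=\; 6,
\]
as required.

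This is essentially a one-step calculation, so I do not anticipate a serious obstacle: the only thing that has to be pinned down carefully is why the two hypotheses combine to force $n|p|=m|q|=k|t|$ (rather than just a nontrivial inequality), and then the correct clause of Lemma \ref{lemma2.1} to invoke is the one handling two equal-order poles, namely item (iii). Note that one could alternatively apply Lemma \ref{lemma2.1}(iii) to the pair $(g^m,h^k)$ or to $(h^k,f^n)$ using the other two representations of $D$ in \eqref{eq2.7}; all three choices give the same bound $2s+2$, which is reassuring and confirms that no sharper estimate is available without extra structural information on the leading Laurent coefficients.
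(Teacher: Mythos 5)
Your proposal is correct and follows essentially the same route as the paper: the hypothesis $\max=\min$ gives $n|p|=m|q|=k|t|$, Lemma \ref{lemma2.1}(iii) applied to the pair in \eqref{eq2.7} yields $U(D,z_0)\leq 2n|p|+2$, and substituting into \eqref{eq2.11} (dropping the nonnegative term $U\left(\frac{1}{D},z_0\right)$ and the vanishing zero-counting terms) gives $A(z_0)\leq 3(2n|p|+2)-6n|p|=6$. This matches the paper's proof step for step.
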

\vskip 2mm
\par
\begin{proof} Since $f,$ $g,$ $h$ are non-constant meromorphic functions satisfying \eqref{eq2.1}, we deduce by \eqref{eq2.8}- \eqref{eq2.10} and
the assumption
$$\max\{n|p|,m|q|,k|t|\}=\min\{n|p|,m|q|,k|t|\}$$
that $n|p|=m|q|=k|t|.$  This together with \eqref{eq2.7} and Lemma \ref{lemma2.1}(iii) gives
\begin{equation}\label{eq2.20}
U(D,z_0)\leq 2n|p|+2.
  \end{equation}
 By \eqref{eq2.11} and \eqref{eq2.20} we have
  \begin{equation}\nonumber
A(z_0)\leq 3( 2n|p|+2 )-6n|p|=6,
  \end{equation}
which reveals the conclusion of Lemma \ref{lemma2.9}.
\end{proof}
\begin{lemma}\label{lemma2.10}  \rm{} \textit{Suppose that $\max\{p,q,t\}<0$ and
$$\max\{n|p|,m|q|,k|t|\}>\min\{n|p|,m|q|,k|t|\}.$$ If $k|t|=\min\{n|p|,m|q|,k|t|\},$ then
 \begin{equation}\label{eq2.21}
A(z_0)\leq 9-(m|q|-k|t|).
 \end{equation}
 }
 \end{lemma}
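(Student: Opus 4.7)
The plan is to identify the only configuration of Laurent orders compatible with the hypotheses, then apply the appropriate part of Lemma~\ref{lemma2.1} to the Wronskian in the form most favorable for us.

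First I would note that $\max\{p,q,t\}<0$ forces all three of $p,q,t$ to be negative, so $f,g,h$ each have a pole at $z_0$. Because $f^n+g^m+h^k\equiv 1$, the principal parts of the three summands must cancel at $z_0$. The pole orders are $n|p|,m|q|,k|t|$; in particular the \emph{largest} of these must be attained by at least two of the three terms, or else the top singular coefficient would have nothing to cancel it. Under our hypotheses $k|t|=\min\{n|p|,m|q|,k|t|\}$ and this minimum is strictly smaller than the maximum, so the maximum cannot be $k|t|$. The only remaining possibility is
\begin{equation*}
n|p|=m|q|>k|t|.
\end{equation*}

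Next I would evaluate the six terms defining $A(z_0)$ in \eqref{eq2.11}. Since each of $f,g,h$ has a pole (not a zero) at $z_0$, we have $U(1/f^n,z_0)=U(1/g^m,z_0)=U(1/h^k,z_0)=0$, and $U(f^n,z_0)=n|p|$, $U(g^m,z_0)=m|q|$, $U(h^k,z_0)=k|t|$. Also $-3U(1/D,z_0)\le 0$ may be dropped. Thus
\begin{equation*}
A(z_0)\le 3\,U(D,z_0)-2\bigl(n|p|+m|q|+k|t|\bigr).
\end{equation*}

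To bound $U(D,z_0)$ I would exploit the freedom in \eqref{eq2.7} and use the representation
\begin{equation*}
D=(g^m)'(h^k)''-(h^k)'(g^m)'',
\end{equation*}
so that the pair of functions appearing have pole orders $m|q|$ and $k|t|$, which are \emph{unequal}. Lemma~\ref{lemma2.1}(iv) then gives $U(D,z_0)\le m|q|+k|t|+3$. Substituting and using $n|p|=m|q|$ collapses the estimate:
\begin{equation*}
A(z_0)\le 3(m|q|+k|t|+3)-2(n|p|+m|q|+k|t|)=-2n|p|+m|q|+k|t|+9=9-(m|q|-k|t|),
\end{equation*}
which is exactly \eqref{eq2.21}.

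The only real subtlety is the choice of which pair of functions to put into the $2\times 2$ Wronskian: pairing $f^n$ with $g^m$ (equal pole orders) via Lemma~\ref{lemma2.1}(iii) would yield the weaker bound $2n|p|+2$ and miss the improvement by $m|q|-k|t|-1$; it is the asymmetry between $k|t|$ and the common value $n|p|=m|q|$ that must be brought into play, and this is made transparent by the identities in \eqref{eq2.7}.
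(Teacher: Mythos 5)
Your proof is correct and follows essentially the same route as the paper: deduce $n|p|=m|q|>k|t|$ from cancellation of the principal parts, drop the zero-counting and $-3U(1/D,z_0)$ terms, and apply Lemma~\ref{lemma2.1}(iv) to the representation of $D$ from \eqref{eq2.7} involving the pole orders $m|q|$ and $k|t|$, which is exactly how \eqref{eq2.22} is obtained there. Your closing remark about why the pair with unequal pole orders must be chosen is a correct observation, though it is implicit rather than discussed in the paper.
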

\vskip 2mm
\par
\begin{proof} Since $f,$ $g,$ $h$ are non-constant meromorphic functions satisfying \eqref{eq2.1}, we deduce by \eqref{eq2.8}-\eqref{eq2.10} and
the assumption $\max\{n|p|,m|q|,k|t|\}>\min\{n|p|,m|q|,k|t|\}$ and $k|t|=\min\{n|p|, m|q|, k|t|\}$
that $n|p|=m|q|>k|t|.$ This together with \eqref{eq2.7} and Lemma \ref{lemma2.1}(iv) gives
\begin{equation}\label{eq2.22}
U(D,z_0)\leq m|q|+|k|t|+3.
  \end{equation}
 By \eqref{eq2.11} and \eqref{eq2.22} we have
  \begin{equation}\nonumber
A(z_0)\leq 3(m|q|+k|t|+3)-2(2m|q|+k|t|)=9-(m|q|-k|t|),
  \end{equation}
which reveals the conclusion of Lemma \ref{lemma2.10}.
\end{proof}
\vskip 2mm
\par
Let $f_1,$ $f_2,$ $f_3$ be three non-constant meromorphic functions. Next we denote by $\overline{N}_0(r, f_1, f_2, f_3)$
the reduced counting function of common poles of $f_1,$ $f_2$ and $f_3$ in $|z|<r,$ and denote by  $\overline{N}_0(r, 1/f_1, 1/f_2)$ the reduced
counting function of common zeros of $f_1$ and $f_2$ in $|z|<r.$
\vskip 2mm
\par
The following result plays an important role in proving Theorem \ref{Theorem1.10} of this paper:
\begin{lemma}\label{lemma2.11}\textit{Suppose that there exist non-constant meromorphic functions
$f,$ $g$ and $h$ satisfying the functional equation \eqref{eq1.1} for $n = 8.$ Then
 \begin{equation}\label{eq2.23}
 \begin{split}
 \frac{1}{3}T(r)&=T(r,f)+S(r)=T(r,g)+S(r)=T(r,h)+S(r)\\
&=\overline{N}\left(r,\frac{1}{f}\right)+S(r)=\overline{N}\left(r,\frac{1}{g}\right)+S(r)=\overline{N}\left(r,\frac{1}{h}\right)+S(r)\\
&=\overline{N}(r,f)+S(r)=\overline{N}(r,g)+S(r)=\overline{N}(r,h)+S(r)\\
 &=\overline{N}_0(r,f, g,h)+S(r)\\
 \end{split}
 \end{equation}
 and
\begin{equation}\label{eq2.24}
 \overline{N}_0\left(r,\frac{1}{f}, \frac{1}{g}\right)+\overline{N}_0\left(r,\frac{1}{g}, \frac{1}{h}\right)+\overline{N}_0\left(r,\frac{1}{h}, \frac{1}{f}\right)=S(r),
\end{equation}
where $T(r)$ and $S(r)$ are defined as in Lemma \ref{lemma2.2}.
}
  \end{lemma}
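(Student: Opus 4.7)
The plan is to combine the lower bound $8T(r)\leq\sum A(z_0)+S(r)$ furnished by Lemma \ref{lemma2.5} (applied with $n=m=k=8$, so that the right-hand side there is the Nevanlinna-integrated version of the local functional $A(z_0)$ defined in \eqref{eq2.11}) with the pointwise upper bounds for $A(z_0)$ provided by Lemmas \ref{lemma2.6}--\ref{lemma2.10}, and then to squeeze the resulting chain of inequalities against the first-main-theorem bounds $\overline{N}(r,1/f)\leq T(r,f)+O(1)$ and $\overline{N}(r,f)\leq T(r,f)+O(1)$ (and likewise for $g$, $h$).

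To upper-bound $\sum A(z_0)$ I would partition every point at which some $U$-quantity is non-zero into four exhaustive types, dictated by the signs of the Laurent orders $p,q,t$ in \eqref{eq2.8}--\eqref{eq2.10} together with the constraint that $f^{8}+g^{8}+h^{8}=1$ forces the leading Laurent coefficients to balance: (a) some zero, no pole (Lemma \ref{lemma2.6}); (b) exactly one zero together with two poles of equal order (Lemma \ref{lemma2.7}); (c) no zero, exactly two poles of equal order and the third function finite non-zero (Lemma \ref{lemma2.8}); (d) a common pole of $f$, $g$, $h$. In (d), Lemma \ref{lemma2.9} gives $A(z_0)\leq 6$ when the three pole orders coincide, while Lemma \ref{lemma2.10} gives a strictly smaller bound when they do not, so $A(z_0)\leq 6$ uniformly throughout (d). Summing these local estimates yields
\begin{equation*}
\sum A(z_0)\leq 6\bigl[\overline{N}(r,1/f)+\overline{N}(r,1/g)+\overline{N}(r,1/h)\bigr]-3\bigl[\overline{N}_0(r,1/f,1/g)+\overline{N}_0(r,1/g,1/h)+\overline{N}_0(r,1/h,1/f)\bigr]+6\overline{N}_0(r,f,g,h)-\Lambda(r)+S(r),
\end{equation*}
where $\Lambda(r)\geq 0$ absorbs the slack between the sharp bound $3$ at (b)-points and the coarser bound $6$ folded into the first term.

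Inserting $\overline{N}(r,1/f)+\overline{N}(r,1/g)+\overline{N}(r,1/h)\leq T(r)+O(1)$ and $3\overline{N}_0(r,f,g,h)\leq\overline{N}(r,f)+\overline{N}(r,g)+\overline{N}(r,h)\leq T(r)+O(1)$ gives
\begin{equation*}
8T(r)\leq 8T(r)-3\bigl[\overline{N}_0(r,1/f,1/g)+\overline{N}_0(r,1/g,1/h)+\overline{N}_0(r,1/h,1/f)\bigr]-\Lambda(r)+S(r),
\end{equation*}
so each non-negative correction on the right is forced to equal $S(r)$. This gives \eqref{eq2.24} at once, and simultaneously saturates every bound used above, up to $S(r)$: $\overline{N}(r,1/f)=T(r,f)+S(r)$ (and the same for $g$, $h$), $\overline{N}(r,f)=T(r,f)+S(r)$ (and the same for $g$, $h$), and $\overline{N}_0(r,f,g,h)=\overline{N}(r,f)+S(r)=\overline{N}(r,g)+S(r)=\overline{N}(r,h)+S(r)$. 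The last two batches force $T(r,f)=T(r,g)=T(r,h)+S(r)$, so each equals $T(r)/3+S(r)$, and all the equalities in \eqref{eq2.23} follow.

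The only substantive step is the case partition: one must verify, using \eqref{eq2.8}--\eqref{eq2.10} and the three-term equation, that the four types above genuinely exhaust all singular points and match the hypotheses of Lemmas \ref{lemma2.6}--\ref{lemma2.10}; in particular, the balancing of leading Laurent coefficients is exactly what forces equal pole orders in (b), (c), and the primary sub-case of (d). After this bookkeeping, the conclusion drops out of the squeeze argument above.
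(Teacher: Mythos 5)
Your proposal is correct and follows essentially the same route as the paper: apply Lemma \ref{lemma2.5} with $n=m=k=8$, bound the local quantity $A(z_0)$ of \eqref{eq2.11} pointwise via Lemmas \ref{lemma2.6}--\ref{lemma2.10} (exactly the paper's inequality \eqref{eq2.27}, with the common-pole term $6\overline{U}(f,g,h,z_0)$ absorbing the cases of Lemmas \ref{lemma2.9} and \ref{lemma2.10}), and then squeeze against $\overline{N}(r,1/f)\leq T(r,f)+O(1)$, $3\overline{N}_0(r,f,g,h)\leq \overline{N}(r,f)+\overline{N}(r,g)+\overline{N}(r,h)\leq T(r)+O(1)$ to force every intermediate inequality to be an equality up to $S(r)$, which yields \eqref{eq2.23} and \eqref{eq2.24}. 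No gaps; your case partition of the singular points matches the hypotheses of the cited lemmas.
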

\vskip 2mm
\par
 \begin{proof}
 We first set
 $$\overline{U}\left(f,g, h, z_0\right)=
\left\{ \aligned
1, & \quad \text{if $z_0$ is a common pole of $f,$ $g$ and $h,$}\\
0, &  \quad \text{if $z_0$ is not a common pole of $f,$ $g$ and $h.$ }
 \endaligned
\right.
$$
Since $f,$ $g,$ $h$ are non-constant meromorphic functions satisfying \eqref{eq1.1} for $n=8,$  by Lemma \ref{lemma2.5} we have
   \begin{equation} \label{eq2.25}
 \begin{split}
 &\quad 8T(r,f)+8T(r,g)+8T(r,h)\\
 &\leq 3\left(N\left(r,\frac{1}{f^8}\right)+N\left(r,\frac{1}{g^8}\right)+N\left(r,\frac{1}{h^8}\right) \right)+3N(r,D)-3N\left(r,\frac{1}{D}\right)\\
 &\quad -2(N(r,f^8)+N(r,g^8)+N(r,h^8))+S(r),
  \end{split}
 \end{equation}
where $D= W(f^8, g^8, h^8)$ is the Wronskian determinant of $f^8,$ $g^8$ and $h^8,$ and $S(r)$ is defined as in Lemma \ref{lemma2.2}. Moreover, \eqref{eq2.11} can be rewritten as
  \begin{equation}\label {eq2.26}
\begin{split}
A(z_0)&=3\left(U\left(\frac{1}{f^8}, z_0\right)+U\left(\frac{1}{g^8},z_0\right)+U\left(\frac{1}{h^8},z_0\right)\right)+3U(D,z_0)\\
&\quad-3U\left(\frac{1}{D},z_0\right)-2(U(f^8,z_0)+U(g^8,z_0)+U(h^8,z_0)).
 \end{split}
  \end{equation}
 By \eqref{eq2.26} and Lemmas \ref{lemma2.6}-\ref{lemma2.10} we deduce
  \begin{equation}\label {eq2.27}
  \begin{split}
A(z_0)&\leq 6\left(\overline{U}\left(\frac{1}{f}, z_0\right)+\overline{U}\left(\frac{1}{g},z_0\right)+\overline{U}\left(\frac{1}{h},z_0\right)\right)\\
    &\quad -3\left(\overline{U}\left(\frac{1}{f}, \frac{1}{g}, z_0\right)+\overline{U}\left(\frac{1}{g},\frac{1}{h}, z_0\right)+\overline{U}\left(\frac{1}{h},\frac{1}{f}, z_0\right)\right)+6\overline{U}\left(f,g, h, z_0\right).\\
    \end{split}
  \end{equation}
 From \eqref{eq2.25}-\eqref{eq2.27} we have
 \begin{equation}\nonumber
     \begin{split}
     &\quad 8T(r,f)+8T(r,g)+8T(r,h)\\
   &\leq 3\left(N\left(r,\frac{1}{f^8}\right)+N\left(r,\frac{1}{g^8}\right)+N\left(r,\frac{1}{h^8}\right) \right)+3N(r,D)-3N\left(r,\frac{1}{D}\right)\\
 &\quad -2(N(r,f^8)+N(r,g^8)+N(r,h^8))+S(r)\\
 &\leq 6\left(\overline{N}\left(r,\frac{1}{f}\right)+\overline{N}\left(r,\frac{1}{g}\right)+\overline{N}\left(r,\frac{1}{h}\right)\right)\\
    &\quad -3\left(\overline{N}_0\left(r, \frac{1}{f}, \frac{1}{g}\right)+\overline{N}_0\left(r, \frac{1}{g},\frac{1}{h}\right)+\overline{N}_0\left(r, \frac{1}{h},\frac{1}{f}\right)\right)+ 6\overline{N}_0\left(r, f, g, h\right)+S(r)\\
    &\leq 6\left(N\left(r,\frac{1}{f}\right)+N\left(r,\frac{1}{g}\right)+N\left(r,\frac{1}{h}\right)\right)\\
    &\quad -3\left(\overline{N}_0\left(r, \frac{1}{f}, \frac{1}{g}\right)
    +\overline{N}_0\left(r, \frac{1}{g},\frac{1}{h}\right)+\overline{N}_0\left(r, \frac{1}{h},\frac{1}{f}\right)\right)\\
 &\quad+2(\overline{N}\left(r,f\right)+\overline{N}\left(r, g\right)+\overline{N}\left(r, h\right))+S(r)\\
 &\leq 6\left(T\left(r,f\right)+T\left(r,g\right)+T\left(r,h\right)\right)\\
 &\quad -3\left(\overline{N}_0\left(r, \frac{1}{f}, \frac{1}{g}\right)+\overline{N}_0\left(r, \frac{1}{g},\frac{1}{h}\right)+\overline{N}_0\left(r, \frac{1}{h},\frac{1}{f}\right)\right)\\
&\quad+2(N\left(r,f\right)+N\left(r, g\right)+N\left(r, h\right))+S(r)\\
&\leq 8\left(T\left(r,f\right)+T\left(r,g\right)+T\left(r,h\right)\right)\\
&\quad -3\left(\overline{N}_0\left(r, \frac{1}{f}, \frac{1}{g}\right)+\overline{N}_0\left(r, \frac{1}{g},\frac{1}{h}\right)+\overline{N}_0\left(r, \frac{1}{h},\frac{1}{f}\right)\right)+S(r),
   \end{split}
  \end{equation}
   which implies that
  \begin{equation}\label{eq2.28}
   \overline{N}\left(r,f\right)= N\left(r,f\right)+S(r)=\overline{N}_0(r,f,g,h)+S(r)=T(r,f)+S(r),
    \end{equation}
   \begin{equation}\label{eq2.29}
   \overline{N}\left(r,g\right)= N\left(r,g\right)+S(r)=\overline{N}_0(r,f,g,h)+S(r)=T(r,g)+S(r),
    \end{equation}
  \begin{equation}\label{eq2.30}
   \overline{N}\left(r,h\right)= N\left(r,h\right)+S(r)=\overline{N}_0(r,f,g,h)+S(r)=T(r,h)+S(r),
    \end{equation}
 \begin{equation}\label{eq2.31}
   \overline{N}\left(r,\frac{1}{f}\right)= N\left(r,\frac{1}{f}\right)+S(r)=T(r,f)+S(r),
    \end{equation}
   \begin{equation}\label{eq2.32}
   \overline{N}\left(r,\frac{1}{g}\right)= N\left(r,\frac{1}{g}\right)+S(r)=T(r,g)+S(r),
    \end{equation}
 \begin{equation}\label{eq2.33}
   \overline{N}\left(r,\frac{1}{h}\right)= N\left(r,\frac{1}{h}\right)+S(r)=T(r,h)+S(r)
    \end{equation}
and
  \begin{equation}\label{eq2.34}
   \overline{N}_0\left(r, \frac{1}{f}, \frac{1}{g}\right)+\overline{N}_0\left(r, \frac{1}{g},\frac{1}{h}\right)+\overline{N}_0\left(r, \frac{1}{h},\frac{1}{f}\right)=S(r).
    \end{equation}
   By \eqref{eq2.28}-\eqref{eq2.34} we get the conclusion of Lemma \ref{lemma2.11}.
  \end{proof}
\vskip 2mm
\par
In the same manner as in the proof of Lemma \ref{lemma2.11} we can get the following two results that is used to prove Theorem \ref{Theorem1.11}:
 \begin{lemma} \label{lemma2.12}
 \textit{Suppose that there exist non-constant entire functions
$f,$ $g$ and $h$ satisfying the functional equation \eqref{eq1.1} for $n = 6.$ Then
 \begin{equation}\label{eq2.35}
  \begin{split}
\frac{1}{3}T(r)&=T(r,f)+S(r)=T(r,g)+S(r)=T(r,h)+S(r)=\overline{N}\left(r,\frac{1}{f}\right)+S(r)\\
&=\overline{N}\left(r,\frac{1}{g}\right)+S(r)=\overline{N}\left(r,\frac{1}{h}\right)+S(r)
 \end{split}
 \end{equation}
and
 \begin{equation}\label{eq2.36}
\overline{N}_0\left(r,\frac{1}{f}, \frac{1}{g}\right)+\overline{N}_0\left(r,\frac{1}{g}, \frac{1}{h}\right)+\overline{N}_0\left(r,\frac{1}{h}, \frac{1}{f}\right)=S(r),
\end{equation}
where $T(r)$ and $S(r)$ are defined as in Lemma \ref{lemma2.2}.
}
\end{lemma}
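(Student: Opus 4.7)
The plan is to mirror the proof of Lemma~\ref{lemma2.11}, taking advantage of the fact that $f, g, h$ (and hence the Wronskian $D = W(f^6, g^6, h^6)$) have no poles. First, since $n_1 = n_2 = n_3 = 6$ satisfies the hypothesis of Lemma~\ref{lemma2.3}, the functions $f^6, g^6, h^6$ are linearly independent. Applying Lemma~\ref{lemma2.5} and observing that $N(r, f^6) = N(r, g^6) = N(r, h^6) = N(r, D) = 0$, we obtain
\begin{equation*}
6\bigl(T(r,f) + T(r,g) + T(r,h)\bigr) \leq 3\bigl(N(r,1/f^6) + N(r,1/g^6) + N(r,1/h^6)\bigr) - 3N(r,1/D) + S(r).
\end{equation*}

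Next I would analyse pointwise contributions to $A(z_0)$ defined in \eqref{eq2.11} (specialised to $n = m = k = 6$ with all pole terms dropped). At a zero $z_0$ of at least one of $f, g, h$, the orders $p, q, t$ in \eqref{eq2.8}--\eqref{eq2.10} are nonnegative with $\max\{p, q, t\} > 0$, and the equation $f^6 + g^6 + h^6 = 1$ forbids a common zero of all three, forcing $\min\{p, q, t\} = 0$. Hence Lemma~\ref{lemma2.6} applies at every such point; summing its pointwise estimate over all zeros gives
\begin{equation*}
3\bigl(N(r,1/f^6) + N(r,1/g^6) + N(r,1/h^6)\bigr) - 3N(r,1/D) \leq 6\bigl(\overline{N}(r,1/f) + \overline{N}(r,1/g) + \overline{N}(r,1/h)\bigr) - 3\bigl(\overline{N}_0(r,1/f,1/g) + \overline{N}_0(r,1/g,1/h) + \overline{N}_0(r,1/h,1/f)\bigr).
\end{equation*}
Combining with the previous inequality and using $\overline{N}(r, 1/f) \leq T(r,f) + O(1)$ (and likewise for $g, h$) forces all three estimates to hold with equality up to $S(r)$, yielding $\overline{N}(r, 1/f) = T(r, f) + S(r)$, the corresponding identities for $g$ and $h$, and \eqref{eq2.36}.

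To establish $T(r,f) = T(r,g) + S(r) = T(r,h) + S(r)$, which is the step I expect to be the main technical obstacle, I would apply Lemma~\ref{lemma2.4} separately to each of $f_1 = f^6$, $f_2 = g^6$, $f_3 = h^6$. Because $N(r, f_j) = 0$ for all $j$, the right-hand side of the Nevanlinna bound is independent of $j$, and repeating the same pointwise analysis as above leads to the three symmetric inequalities
\begin{equation*}
6T(r, f), \; 6T(r, g), \; 6T(r, h) \leq 2\bigl(T(r,f) + T(r,g) + T(r,h)\bigr) + S(r).
\end{equation*}
Since their sum coincides with the overall bound already proved, each of them must be an equality up to $S(r)$; subtracting pairs then yields $T(r,f) = T(r,g) + S(r) = T(r,h) + S(r)$, from which the full chain \eqref{eq2.35} follows. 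In Lemma~\ref{lemma2.11} the three characteristic functions were coupled through the common pole count $\overline{N}_0(r, f, g, h)$; since no such quantity is available when $f, g, h$ are entire, the coupling here must instead be supplied by the $j$-symmetry of the pole-free Nevanlinna inequality, as sketched above.
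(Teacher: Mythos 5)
Your proposal is correct, and most of it runs on exactly the machinery the paper relies on (Lemma~\ref{lemma2.3} for independence, Lemma~\ref{lemma2.4}/Lemma~\ref{lemma2.5}, and the pointwise estimate of Lemma~\ref{lemma2.6}, which is indeed the only case that occurs, since entire solutions have $p,q,t\geq 0$ and cannot have a common zero). The genuine difference is in how you obtain $T(r,f)=T(r,g)+S(r)=T(r,h)+S(r)$. The paper gives no separate argument for Lemma~\ref{lemma2.12}, saying only that it follows ``in the same manner'' as Lemma~\ref{lemma2.11}; but in Lemma~\ref{lemma2.11} the mutual equality of the three characteristics is extracted from the common-pole term, via the forced identity relating $6\overline{N}_0(r,f,g,h)$ to $2\left(\overline{N}(r,f)+\overline{N}(r,g)+\overline{N}(r,h)\right)$, and that coupling has no analogue for entire solutions. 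A verbatim transplant of the summed chain therefore yields only $\overline{N}\left(r,\frac{1}{f}\right)=T(r,f)+S(r)$ (and its analogues for $g,h$) together with \eqref{eq2.36}, not the equality of $T(r,f)$, $T(r,g)$, $T(r,h)$. Your fix --- applying Lemma~\ref{lemma2.4} to each of $f^6,g^6,h^6$ separately, noting that with all pole terms zero the right-hand side is independent of $j$, bounding it by $2\left(T(r,f)+T(r,g)+T(r,h)\right)+S(r)$ through the same zero analysis, and then observing that the three nonnegative deficits sum to $S(r)$, so each inequality is saturated and $T(r,f)=\frac{1}{3}T(r)+S(r)$, etc. --- is sound, and it supplies precisely the detail that the paper's ``same manner'' remark glosses over. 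What your route buys is a self-contained proof of the entire case with no recourse to pole counting; what the paper's route buys in the meromorphic setting is the additional conclusion $\overline{N}(r,f)=\overline{N}_0(r,f,g,h)+S(r)$, which simply has no counterpart here.
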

\vskip 2mm
\par \begin{remark}\rm{}\label{remar1.1}Ishizaki \cite[p.82]{Ishizaki2002} proved \eqref{eq2.23} in Lemma \ref{lemma2.11}. Ishizaki \cite[p. 84]{Ishizaki2002} also  proved a part of Lemma \ref{lemma2.12}.
\end{remark}
\vskip 2mm
\par Next we introduce some notions of normal families of meromorphic functions in a domain of the complex plane. Following Montel \cite{Montel1927}, a class $\mathcal{F}$ of meromorphic functions in a domain $D\subseteq\Bbb{C}$ is called normal in $D,$ if given any sequence $\{f_n(z)\}$ of meromorphic functions in  $\mathcal{F},$ we can find a subsequence $\{f_{n_p}(z)\}\subseteq\{f_n(z)\}$ which converges everywhere in $D$ and uniformly on compact subsets of $D$ with respect to the chordal metric on Riemann sphere. We then say that $\{f_{n_p}(z)\}$  converges locally uniformly in $D.$
\vskip 2mm
\par An equivalent statement is that for every $z_0$ in $D$ there exists a neighbourhood $|z-z_0|<\delta$ in which $\{f_{n_p}(z)\}$ or  $\{f^{-1}_{n_p}(z)\}$ converges uniformly as $p\rightarrow\infty.$ Suppose in fact that the second condition is satisfied. If $w_1,$ $w_2$ are two points in the $w$-plane, their distance in the chordal metric of Riemann sphere is
 \begin{equation}\nonumber
k(w_1, w_2)=\frac{|w_1-w_2|}{\sqrt{(1+|w_1|^2)(1+|w_2|^2)}}\leq |w_1-w_2|
\end{equation}
Also
 \begin{equation}\nonumber
k(w_1, w_2)=\frac{|\frac{1}{w_1}-\frac{1}{w_2}|}{\sqrt{(1+|\frac{1}{w_1}|^2)(1+|\frac{1}{w_2}|^2)}}\leq \left|\frac{1}{w_1}-\frac{1}{w_2}\right|.
\end{equation}
Thus, if either $f_n(z)\rightarrow f(z)$ or $f_n(z)^{-1}\rightarrow f(z)^{-1}$ uniformly in a set $E\subseteq\Bbb{C}\cup\{\infty\},$ then
$k(f_n(z), f(z))\rightarrow 0,$ uniformly in $E.$ Thus, if one of these two conditions holds uniformly in some neighbourhood of every point of $D,$
then $k(f_n(z), f(z))\rightarrow 0$ uniformly in some neighbourhood of every point of $D,$  and hence by the Heine-Borel theorem uniformly on every compact subset of $D;$ see, for example, \cite[pp.157-160]{Hayman1964}. In addition, we need the following notion of the spherical derivative of a meromorphic function in the complex plane (cf.\cite{Hayman1964,Yang1993}: let $f$ be a non-constant meromorphic function. The spherical derivative of
$f$ at $z\in \Bbb{C}$ is given as $f^{\#}(z)=\frac{|f'(z)|}{1+|f(z)|^2}.$ We recall the following classical normality criterion of a class of meromorphic functions due to Marty\cite{Marty1931}:
\vskip 2pt
\par
\begin{lemma}\rm{}(\cite{Marty1931})\label{lemma2.13} \textit{A class $\mathcal{F}$ of functions $f$ meromorphic in a domain $D$ of the complex plane is normal in $D$ if and only if $|f'(z)|/(1+|f(z)|^2)$ is uniformly bounded on any compact subset of $D$ for $f\in\mathcal{F}.$}
\end{lemma}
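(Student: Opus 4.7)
The plan is to prove both implications by exploiting the compactness of the Riemann sphere under the chordal metric $k(\cdot,\cdot)$ already introduced in the excerpt, together with an arc-length identity relating $k$ to $f^{\#}$. The crucial bridge is that for any rectifiable path $\gamma$ from $z_1$ to $z_2$ in $D$ one has
\begin{equation*}
k(f(z_1),f(z_2))\leq \int_{\gamma} f^{\#}(z)\,|dz|,
\end{equation*}
which follows because $f^{\#}$ is (up to a normalization factor) the speed of $f\circ\gamma$ in the chordal metric, and the chordal distance is bounded by chordal arc-length.

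For the sufficiency ($\Leftarrow$), assume $f^{\#}$ is uniformly bounded by some $M_K$ on each compact $K\subset D$ as $f$ ranges over $\mathcal{F}$. Covering $K$ by finitely many closed disks compactly contained in $D$ and applying the displayed inequality to straight line segments yields
\begin{equation*}
k(f(z_1),f(z_2))\leq M_{K^{*}}\,|z_1-z_2|
\end{equation*}
for $z_1,z_2$ in each disk, where $K^{*}$ is a slightly enlarged compact set. Hence $\mathcal{F}$ is equicontinuous on $K$ with respect to the chordal metric. Because $(\Bbb{C}\cup\{\infty\},k)$ is a compact metric space and $D$ is $\sigma$-compact, the Arzel\`a--Ascoli theorem provides, for every sequence $\{f_n\}\subseteq\mathcal{F}$, a subsequence converging locally uniformly in $k$ to some continuous $f:D\to\Bbb{C}\cup\{\infty\}$. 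The discussion preceding the lemma (local uniform convergence of either $f_n$ or $1/f_n$ in the Euclidean sense near each point) then forces $f$ to be meromorphic or identically $\infty$, establishing normality.

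For the necessity ($\Rightarrow$), I would argue by contrapositive. Suppose $f^{\#}$ is not uniformly bounded on some compact $K\subset D$; choose $f_n\in\mathcal{F}$ and $z_n\in K$ with $f_n^{\#}(z_n)\to\infty$, and extract $z_n\to z_0\in K$. If $\mathcal{F}$ were normal, pass to a further subsequence with $f_{n_p}\to f$ locally uniformly in $k$, where $f$ is meromorphic on $D$ or $\equiv\infty$. If $f(z_0)\neq\infty$, then $f_{n_p}\to f$ uniformly in the Euclidean sense on some closed disk about $z_0$, hence by Weierstrass $f_{n_p}'\to f'$ uniformly there, giving $f_{n_p}^{\#}(z_{n_p})\to f^{\#}(z_0)<\infty$, contradicting our choice. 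If $f(z_0)=\infty$, apply the same argument to $1/f_{n_p}\to 1/f$ and invoke the invariance $(1/f)^{\#}=f^{\#}$ (immediate from $(1/w)'=-1/w^{2}$ and $1+|1/w|^{2}=(1+|w|^{2})/|w|^{2}$) to reach the same contradiction.

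The main obstacle I expect is the pole case $f(z_0)=\infty$ in the necessity direction: before differentiating one must transfer to $1/f_{n_p}$, verify that Euclidean-uniform convergence of $1/f_{n_p}$ near $z_0$ follows from chordal-uniform convergence, and then use the invariance of $f^{\#}$ under $w\mapsto 1/w$. The arc-length estimate for $k$ and the appeal to Arzel\`a--Ascoli for maps into the compact metric space $(\Bbb{C}\cup\{\infty\},k)$ are otherwise routine.
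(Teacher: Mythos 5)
The paper gives no proof of Lemma \ref{lemma2.13}: it is quoted as Marty's classical criterion with a citation to \cite{Marty1931}, so there is no in-paper argument to compare against. Your proof is the standard one and is correct: the estimate $k(f(z_1),f(z_2))\leq\int_{\gamma}f^{\#}(z)\,|dz|$ (chord bounded by spherical arc length, with $f^{\#}$ continuous across poles) plus the Arzel\`a--Ascoli theorem for maps into the compact space $\left(\Bbb{C}\cup\{\infty\},k\right)$ yields sufficiency, and the contrapositive argument via Weierstrass' theorem, the identity $(1/f)^{\#}=f^{\#},$ and the transfer from chordal-uniform to Euclidean-uniform convergence of $1/f_{n_p}$ near a point where the limit is $\infty$ (the one delicate step, which you correctly flag and which is routine since a small chordal neighbourhood of a bounded set stays away from $\infty$) yields necessity.
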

\vskip 2mm
\par The following result is due to Pang \cite{Pang1989}:
\vskip 2mm
\par
\begin{lemma}\rm{}(\cite[Lemma 1]{Pang1989})\label{lemma2.14}
Let $f$ be a meromorphic function in the domain $D=\{z\in\Bbb{C}: |z|<1\},$ and let $\alpha$ be a real number satisfying $0\leq \alpha<1.$ If there exists a point $z^{*}\in\{z\in\Bbb{C}: |z|<r\},$ where $r$ is a fixed positive number satisfying $0<r<1,$ such that
\begin{equation} \nonumber
\frac{\left(1-\left|\frac{z^{*}}{r}\right|^2\right)^{\alpha+1}|f'(z^{*})|}{\left(1-\left|\frac{z^{*}}{r}\right|^2\right)^{2\alpha}+|f(z^{*})|^2}>1,
\end{equation}
then there exist a point $z_0\in\{z\in\Bbb{C}: |z|<r\}$ and a real number $t$ satisfying $0<t<1,$ such that
\begin{equation} \nonumber
\sup\limits_{|z|<r}\frac{\left(1-\left|\frac{z}{r}\right|^2\right)^{\alpha+1}t^{k+1}|f'(z)|}{\left(1-\left|\frac{z}{r}\right|^2\right)^{2\alpha}t^{2\alpha}
+|f(z)|^2}
=\frac{\left(1-\left|\frac{z_0}{r}\right|^2\right)^{\alpha+1}t^{\alpha+1}|f'(z_0)|}{\left(1-\left|\frac{z_0}{r}\right|^2\right)^{2\alpha}t^{2\alpha}
+|f(z_0)|^2}=1.
\end{equation}
\end{lemma}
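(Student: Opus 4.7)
The plan is to adapt the classical Zalcman--Pang rescaling argument. For each $t\in(0,1]$ I would introduce the auxiliary family
$$
\phi_t(z)=\frac{\left(1-\left|\frac{z}{r}\right|^{2}\right)^{\alpha+1}t^{\alpha+1}|f'(z)|}{\left(1-\left|\frac{z}{r}\right|^{2}\right)^{2\alpha}t^{2\alpha}+|f(z)|^{2}},
$$
and set $M(t)=\sup_{|z|<r}\phi_t(z)$; I read the exponent $t^{k+1}$ appearing in the statement as a typographical slip for $t^{\alpha+1}$, since no $k$ has been introduced. My first step is to check that $\phi_t$ extends continuously by zero to the closed disc $\{|z|\le r\}$: the weight $(1-|z/r|^{2})^{\alpha+1}$ forces the value $0$ on $|z|=r$, while at a pole of $f$ of order $p\ge 1$ the quotient $|f'|/|f|^{2}$ is of order $|z-z_{1}|^{p-1}$ and therefore vanishes there. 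Consequently $M(t)$ is in fact a maximum, attained at some interior point of the disc.

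The second step is to verify that $t\mapsto M(t)$ is continuous on $(0,1]$. On any compact subset of $\{|z|\le r\}$ avoiding the zeros and poles of $f$ this is immediate from joint continuity of $\phi_t(z)$ in $(t,z)$, and near the exceptional points the uniform bounds from the first step let one transfer continuity from $\phi_t$ to the supremum.

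The substance of the argument is the behaviour of $M(t)$ at the endpoints. The hypothesis already gives $M(1)\ge\phi_{1}(z^{*})>1$. I would then prove $M(t)\to 0$ as $t\to 0^{+}$. Away from the zero set of $f$ one has $|f|^{2}\ge c_{0}>0$ on the relevant compact region, so $\phi_{t}\le Ct^{\alpha+1}\to 0$. Near a zero $z_{1}$ of $f$ of multiplicity $m\ge 1$, inserting the expansion $f(z)=a(z-z_{1})^{m}+\cdots$ yields
$$
\phi_{t}(z)\le C\,\frac{t^{\alpha+1}|z-z_{1}|^{m-1}}{t^{2\alpha}+|z-z_{1}|^{2m}},
$$
whose maximum in $|z-z_{1}|$ is of order $t^{1-\alpha/m}$; since $0\le\alpha<1\le m$ this tends to zero. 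A finite cover of $\{|z|\le r\}$ by such neighbourhoods, together with the complement bound, gives $M(t)\to 0$.

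Once these pieces are assembled, the intermediate value theorem applied to the continuous function $M$ produces some $t\in(0,1)$ with $M(t)=1$, and the first step produces a corresponding interior maximiser $z_{0}\in\{|z|<r\}$. The main obstacle I anticipate is the local analysis at multiple zeros of $f$: controlling $\phi_{t}$ uniformly there and propagating continuity of $M(t)$ through such points requires the scaling computation sketched above, whereas the rest of the argument is essentially bookkeeping.
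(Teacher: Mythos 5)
The paper offers no proof of this lemma at all — it is quoted from Pang's 1989 paper — so there is no internal argument to compare against; your continuity-plus-intermediate-value scheme for $M(t)=\sup_{|z|<r}\phi_t(z)$ is precisely the standard Pang/Zalcman route (including the correct reading of $t^{k+1}$ as $t^{\alpha+1}$), and in outline it is sound: $M$ is a genuine maximum attained in the open disc, $M(1)>1$ by hypothesis, $M(t)\to 0$ as $t\to 0^{+}$, and the intermediate value theorem yields $t\in(0,1)$ with $M(t)=1$ together with an interior maximiser $z_0$. Two details need tightening. First, at a \emph{simple} pole of $f$ the quotient $|f'|/|f|^{2}$ tends to a finite nonzero limit, not to $0$; this is harmless, since all you need is that $\phi_t$ extends continuously (e.g.\ write it in terms of $g=1/f$), but the claim as stated is wrong for $p=1$. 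Second, zeros of $f$ lying on the circle $|z|=r$ are not covered by your reasoning: there the weight in the denominator degenerates, so neither "the weight forces the value $0$ on $|z|=r$" nor the local bound $Ct^{\alpha+1}|z-z_1|^{m-1}/(t^{2\alpha}+|z-z_1|^{2m})$ applies as written. The fix is to discard $|f|^{2}$ from the denominator near the boundary and use $\phi_t(z)\leq \left(1-\left|\tfrac{z}{r}\right|^{2}\right)^{1-\alpha}t^{1-\alpha}|f'(z)|$, which tends to $0$ as $|z|\to r$ uniformly in $t\in(0,1]$ because $\alpha<1$ and $f'$ is bounded near an analytic boundary point; this both justifies the continuous extension by zero to $|z|=r$ and supplies the missing boundary case in the proof that $M(t)\to 0$. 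With these repairs your argument is complete and coincides with the classical one.
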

\vskip 2mm
\par The following result is the well-known Pang-Zalcman's Lemma:
\vskip 2mm
\par
\begin{lemma}\rm{}(Pang-Zalcman's Lemma, \cite{Gu1991,Pang1988,Zalcman1975})\label{lemma2.15}.
 \textit{Let $\mathcal{F}$ be a family of meromorphic functions in the unit disc $D_1(0)=:\{z\in\Bbb{C}:|z|<1\}$ and $\alpha$ be a real number
satisfying $-1<\alpha<1.$ Then, if $\mathcal{F}$ is not normal at a point
$z_0\in D_1(0),$ there exist, for each $-1 <\alpha < 1:$}
\vskip 2mm
\par (i)  \textit{points $z_n\in D_1(0),$ $z_n\rightarrow z_0,$}
\vskip 2mm
\par (ii) \textit{ positive numbers $\rho_n,$ $\rho_n\rightarrow 0^{+}$}
and
\vskip 2mm
\par (iii)  \textit{ functions $f_n\in \mathcal{F}$ such that $\frac{f_n(z_n+\rho_n\zeta)}{\rho_n^{\alpha}}\rightarrow
g(\zeta)$ spherically uniformly on compact subset of $\Bbb{C},$
where $g$ is a non-constant meromorphic function. The function $g$
may be taken to satisfy the normalization $g^{\#}(\zeta)\leq
g^{\#}(0)=1.$}
\end{lemma}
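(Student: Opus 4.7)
The plan is the standard Pang--Zalcman rescaling: exploit non-normality via Marty's criterion (Lemma \ref{lemma2.13}), use Pang's maximum principle (Lemma \ref{lemma2.14}) to select rescaling centers, and extract a limit by normality of the rescaled family. Since the spherical metric is invariant under $w\mapsto 1/w$ and $(1/f)^{\#}=f^{\#}$, the case $-1<\alpha<0$ reduces to $0<-\alpha<1$ by replacing $\mathcal{F}$ with $\{1/f:f\in\mathcal{F}\}$ (still non-normal at $z_0$) and inverting the limit at the end. I therefore assume $0\leq\alpha<1$.

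\emph{Localization.} Since $\mathcal{F}$ is non-normal at $z_0$, for each $k\geq 1$ it is non-normal on the disk $\Delta_k:=\{|z-z_0|<1/k\}$, and by Marty there exist $f_n^{(k)}\in\mathcal{F}$ and $w_n^{(k)}\in\Delta_k$ with $(f_n^{(k)})^{\#}(w_n^{(k)})\to\infty$ as $n\to\infty$. Conformally mapping $\Delta_k$ to the unit disc, I would apply Lemma \ref{lemma2.14} with the pulled-back $w_n^{(k)}$ playing the role of $z^{*}$ and any fixed $r\in(0,1)$: the hypothesis of Lemma \ref{lemma2.14} holds for large $n$ because the displayed quotient dominates a positive multiple of the spherical derivative at $w_n^{(k)}$, which blows up. This yields centers $z_n^{(k)}\in\Delta_k$ and scales $t_n^{(k)}\in(0,1)$; a diagonal argument selects a single sequence $f_n\in\mathcal{F}$, $z_n\to z_0$, $t_n\in(0,1)$.

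\emph{Rescaled family.} Set $\rho_n:=(1-|z_n/r|^2)t_n$ (in the appropriate pulled-back disc) and
\[
g_n(\zeta):=\rho_n^{-\alpha}\,f_n(z_n+\rho_n\zeta).
\]
A direct computation of the spherical derivative gives
\[
g_n^{\#}(\zeta)=\frac{(1-|z_n/r|^2)^{\alpha+1}\,t_n^{\alpha+1}\,|f_n'(z_n+\rho_n\zeta)|}{(1-|z_n/r|^2)^{2\alpha}\,t_n^{2\alpha}+|f_n(z_n+\rho_n\zeta)|^2},
\]
so the defining equality from Lemma \ref{lemma2.14} yields $g_n^{\#}(0)=1$. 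To see that $\rho_n\to 0^{+}$, I would evaluate the supremum bound of Lemma \ref{lemma2.14} at the auxiliary point $w_n$: since $f_n^{\#}(w_n)\to\infty$ and $1-|w_n/r|^2$ stays bounded away from $0$, the bound forces $t_n\to 0$, hence $\rho_n\to 0$.

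\emph{Passage to the limit.} For a fixed compact $K\subset\mathbb{C}$ and $\zeta\in K$, the point $z_n+\rho_n\zeta$ eventually lies in $\{|z|<r\}$; comparing the displayed formula for $g_n^{\#}(\zeta)$ with the supremum bound of Lemma \ref{lemma2.14} evaluated at $z=z_n+\rho_n\zeta$, the discrepancy is the ratio $\bigl((1-|z_n/r|^2)/(1-|(z_n+\rho_n\zeta)/r|^2)\bigr)^{\alpha+1}$ in the numerator together with an analogous factor in the denominator, both of which tend to $1$ uniformly on $K$ since $\rho_n\to 0$. Thus $g_n^{\#}(\zeta)\leq 1+o(1)$ uniformly on $K$, so by Marty's criterion the family $\{g_n\}$ is normal on $\mathbb{C}$, and a subsequence converges spherically locally uniformly to a meromorphic function $g$ on $\mathbb{C}$. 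Continuity of the spherical derivative under this mode of convergence gives $g^{\#}(0)=1$, so $g$ is non-constant, and $g^{\#}\leq 1$ throughout $\mathbb{C}$. The main technical obstacles are the diagonal localization ensuring $z_n\to z_0$, and the derivation $\rho_n\to 0$, which requires coupling the supremum equality at $z_n$ with the explicit blow-up of $f_n^{\#}$ at $w_n$; the rest is bookkeeping of the scaling factors $\rho_n^{\alpha}$ and $\rho_n^{1-\alpha}$.
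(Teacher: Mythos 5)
Your overall strategy is the same as the paper's (Marty's criterion to produce points of exploding spherical derivative, Pang's Lemma \ref{lemma2.14} to select centers and scales, then the spherical-derivative identity for $g_n(\zeta)=\rho_n^{-\alpha}f_n(z_n+\rho_n\zeta)$, a ratio-tends-to-one comparison with the supremum, Marty again, and extraction of a normalized non-constant limit). Your deduction of $t_n\to 0$ from the supremum equality evaluated at the auxiliary point, the identity $g_n^{\#}(0)=1$, and the uniform bound $g_n^{\#}\leq 1+o(1)$ on compacta all match the paper's computations \eqref{eq2.44}--\eqref{eq2.59}. Your reduction of $-1<\alpha<0$ to $0<-\alpha<1$ via $f\mapsto 1/f$ (using $(1/f)^{\#}=f^{\#}$ and the invariance of chordal convergence under inversion) is correct and is a genuine addition: the paper only proves the case $0\leq\alpha<1$ and cites Pang for the negative range.

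There is, however, a real gap in your localization step, i.e.\ in how you secure $z_n\to z_0$. You propose to map each disc $\Delta_k=\{|z-z_0|<1/k\}$ conformally onto the unit disc, apply Lemma \ref{lemma2.14} there with a fixed $r$, and then diagonalize. But the normalization $\rho^{-\alpha}$ is not invariant under the dilation $w=k(z-z_0)$: if $F(w)=f(z_0+w/k)$ and Lemma \ref{lemma2.14} produces $\tilde z$, $t$, $\tilde\rho=(1-|\tilde z/r|^2)t$ with $G(\zeta)=\tilde\rho^{-\alpha}F(\tilde z+\tilde\rho\zeta)$ satisfying $G^{\#}(0)=1$, then in the original coordinates the center is $z_*=z_0+\tilde z/k$, the scale is $\rho_*=\tilde\rho/k$, and
\begin{equation}\nonumber
\rho_*^{-\alpha}f(z_*+\rho_*\zeta)=k^{\alpha}\,G(\zeta),
\end{equation}
so along a diagonal with $k\to\infty$ the factor $k^{\alpha}$ destroys the normalization $g_n^{\#}(0)=1$ for every $\alpha\in(0,1)$, and the diagonal sequence need not converge spherically to a non-constant meromorphic limit at all (only the case $\alpha=0$ survives this device). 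Moreover your purely qualitative input $f_n^{\#}(w_n)\to\infty$ is not strong enough once the discs must shrink: one then needs $t_n$ small compared with the shrinking radius to get both $z_n\to z_0$ and domains exhausting $\Bbb{C}$. The paper's proof handles exactly this point by staying in the original coordinates and letting the radius in Lemma \ref{lemma2.14} shrink, taking $r_n=1/n$, auxiliary points $z_n^{*}$ with $|z_n^{*}|\leq 1/(2n)$, and the quantitative blow-up $f_n^{\#}(z_n^{*})>2^{1+\alpha}n^{2+\alpha}$ of \eqref{eq2.37}; this forces $nt_n\to 0$ in \eqref{eq2.44}--\eqref{eq2.46}, which simultaneously gives $z_n\to z_0$ (since $|z_n|<r_n\to 0$) and $R_n=(r_n-|z_n|)/\rho_n\to\infty$ in \eqref{eq2.50}--\eqref{eq2.52}. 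To repair your argument you should drop the conformal pullback and instead incorporate such a rate condition (or an equivalent device) directly in the original coordinates.
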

\vskip 2mm
\par
\begin{remark}\rm{} \label{remark2.2} {\bf(I)}  Zalcman \cite[the main lemma]{Zalcman1975} proved Lemma \ref{lemma2.15} for $\alpha=0,$  while Pang \cite[Lemma 3]{Pang1988} and Pang \cite[Lemma 2]{Pang1989} proved Lemma \ref{lemma2.15} for $-1<\alpha<0$ and $0<\alpha<1$ respectively. {\bf (II)} For the convenience of the readers, we provide the proof of Lemma \ref{lemma2.15} from Pang \cite[Lemma 2]{Pang1989} and Gu\cite[Theorem 3.26]{Gu1991} for the case of $0\leq\alpha<1$ as follows: without loss of generality, we suppose that $z_0=0.$ Then, by Lemma \ref{lemma2.13} we see that there exists an infinite sequence of the points $\{z^{*}_n\}$ such that
 $z^{*}_n\in \{z:|z|\leq \frac{1}{2n}\},$ and there exists an infinite sequence of meromorphic functions $\{f_n\}\subset \mathcal{F}$ with $n\in\Bbb{Z}^{+}$ such that
 \begin{equation}\label{eq2.37}
 \frac{|f'_n(z^{*}_n)|}{1+f_n(z^{*}_n)|^2}>2^{1+\alpha}n^{2+\alpha}\quad\text{and}\quad \lim\limits_{n\rightarrow\infty}\frac{|f'_n(z^{*}_n)|}{1+f_n(z^{*}_n)|^2}=\infty.
 \end{equation}
 Next we set
  \begin{equation}\label{eq2.38}
 r^{*}_n=\frac{1}{2n} \quad \text{and}\quad r_n=\frac{1}{n} \quad \text{ with} \quad n\in\Bbb{Z}^{+} \quad \text {and} \quad n\geq 2.
 \end{equation}
 Then, it follows by \eqref{eq2.38} that
 \begin{equation}\label{eq2.39}
  |z^{*}_n|\leq r^{*}_n<r_n<1 \quad  \text{for}\quad n\in\Bbb{Z}^{+}\setminus\{1\}
   \end{equation}
  and
  \begin{equation} \label{eq2.40}
\frac{\left(1-\left|\frac{z^{*}_n}{r_n}\right|^2\right)^{\alpha+1}|f'_n(z^{*}_n)|}{\left(1-\left|\frac{z^{*}_n}{r_n}\right|^2\right)^{2\alpha}+|f_n(z^{*}_n)|^2}
\geq \left(1-\left|\frac{z^{*}_n}{r_n}\right|^2\right)^{\alpha+1}\cdot\frac{|f'_n(z^{*}_n)|}{1+|f_n(z^{*}_n)|^2} ,
\end{equation}
From \eqref{eq2.37}-\eqref{eq2.40} we derive
\begin{equation} \label{eq2.41}
\lim\limits_{n\rightarrow\infty}\frac{\left(1-\left|\frac{z^{*}_n}{r_n}\right|^2\right)^{\alpha+1}|f'_n(z^{*}_n)|}{\left(1-\left|\frac{z^{*}_n}{r_n}\right|^2\right)^{2\alpha}+|f_n(z^{*}_n)|^2}
=\infty.
\end{equation}
 From \eqref{eq2.41} we might as well as assume that
 \begin{equation} \label{eq2.42}
\frac{\left(1-\left|\frac{z^{*}_n}{r_n}\right|^2\right)^{\alpha+1}|f'_n(z^{*}_n)|}{\left(1-\left|\frac{z^{*}_n}{r_n}\right|^2\right)^{2\alpha}+|f_n(z^{*}_n)|^2}
>1.
\end{equation}
 According to \eqref{eq2.42} and Lemma \ref{lemma2.14}, there exist point $z_n\in\{z\in\Bbb{C}: |z|<r_n\}$ and a positive number $t_n$ satisfying $0<t_n<1,$ such that
\begin{equation}\label{eq2.43}
\sup\limits_{|z|<r_n}
\frac{\left(1-\left|\frac{z}{r_n}\right|^2\right)^{\alpha+1}t_n^{k+1}|f'_n(z)|}{\left(1-\left|\frac{z}{r_n}\right|^2\right)^{2\alpha}t_n^{2\alpha}
+|f_n(z)|^2}
=\frac{\left(1-\left|\frac{z_n}{r_n}\right|^2\right)^{\alpha+1}t_n^{\alpha+1}|f'_n(z_n)|}{\left(1-\left|\frac{z_n}{r_n}\right|^2\right)^{2\alpha}t_n^{2\alpha}
+|f_n(z_n)|^2}=1.
\end{equation}
By \eqref{eq2.38}, \eqref{eq2.39}, \eqref{eq2.43} and the first inequality of \eqref{eq2.37} we derive
\begin{equation}\label{eq2.44}
\begin{split}
1&=\frac{\left(1-\left|\frac{z_n}{r_n}\right|^2\right)^{\alpha+1}t_n^{\alpha+1}|f'_n(z_n)|}{\left(1-\left|\frac{z_n}{r_n}\right|^2\right)^{2\alpha}t_n^{2\alpha}
+|f_n(z_n)|^2}\geq \frac{\left(1-\left|\frac{z^{*}_n}{r_n}\right|^2\right)^{\alpha+1}t_n^{\alpha+1}|f'_n(z^{*}_n)|}{\left(1-\left|\frac{z^{*}_n}{r_n}\right|^2\right)^{2\alpha}
t_n^{2\alpha}+|f_n(z^{*}_n)|^2} \\
&\geq t_n^{\alpha+1}\left(1-\left|\frac{z^{*}_n}{r_n}\right|^2\right)^{\alpha+1} \frac{|f'_n(z^{*}_n)|}{1+|f_n(z^{*}_n)|^2}
> 2^{\alpha+1}\left(1-\left|\frac{z^{*}_n}{r_n}\right|^2\right)^{\alpha+1}t_n^{\alpha+1}n^{\alpha+2}\\
&=2^{\alpha+1}n\left(1-\left|\frac{z^{*}_n}{r_n}\right|^2\right)^{\alpha+1}(nt_n)^{\alpha+1}= 2^{\alpha+1}n^{\alpha+2}\left(\left(1-\left|\frac{z^{*}_n}{r_n}\right|^2\right)t_n\right)^{\alpha+1}
\end{split}
\end{equation}
and
\begin{equation}\label{eq2.45}
0\leq \left|\frac{z^{*}_n}{r_n}\right|\leq \frac{1}{2} \quad \text{for each}\quad n\in\Bbb{Z}^{+}.
\end{equation}
By \eqref{eq2.44}, \eqref{eq2.45} and the assumption $0\leq\alpha<1$ we derive
\begin{equation}\label{eq2.46}
\lim\limits_{n\rightarrow\infty}\left(1-\left|\frac{z^{*}_n}{r_n}\right|^2\right)t_n=0\quad\text{and}\quad \lim\limits_{n\rightarrow\infty}nt_n=0.
\end{equation}
By the equality in \eqref{eq2.44} we derive
\begin{equation}\label{eq2.47}
1=\frac{\left(1-\left|\frac{z_n}{r_n}\right|^2\right)^{\alpha+1}t_n^{\alpha+1}|f'_n(z_n)|}{\left(1-\left|\frac{z_n}{r_n}\right|^2\right)^{2\alpha}t_n^{2\alpha}
+|f_n(z_n)|^2}\leq t_n^{1-\alpha}\left(1-\left|\frac{z_n}{r_n}\right|^2\right)^{1-\alpha} \frac{|f'_n(z_n)|}{1+|f_n(z_n)|^2}
\end{equation}
By the right equality of \eqref{eq2.46} we have
\begin{equation}\label{eq2.48}
\lim\limits_{n\rightarrow\infty} t_n=0^{+}.
 \end{equation}
  By \eqref{eq2.47}, \eqref{eq2.48}, the assumption $0\leq \alpha<1$ and the known results $0<t_n<1$ and $|z_n|<r_n$ we derive
\begin{equation}\nonumber 
\lim\limits_{n\rightarrow\infty}\frac{|f'_n(z_n)|}{1+|f_n(z_n)|^2}=\infty.
\end{equation}
Next we set
 \begin{equation}\label{eq2.49}
\rho_n=:\left(1-\left|\frac{z_n}{r_n}\right|^2\right)t_n.
\end{equation}
Then, it follows by \eqref{eq2.48}, \eqref{eq2.49} and the known result $|z_n|<r_n$ that
\begin{equation}\label{eq2.50}
\lim\limits_{n\rightarrow\infty}\frac{\rho_n}{r_n-|z_n|}=0.
\end{equation}
By \eqref{eq2.50} and the fact $|z_n|<r_n$ for $n\in \Bbb{Z}^{+}\setminus\{1\},$ we derive
\begin{equation}\label{eq2.51}
|z_n+\rho_n\zeta|<r_n\quad \text{for each} \quad \zeta\in D_{R_n}(0),
\end{equation}
where and in what follows, $D_{R_n}(0)=\{\zeta\in\Bbb{C}:|\zeta|<R_n\}$ and $R_n=:\frac{r_n-|z_n|}{\rho_n}$ for $n\in \Bbb{Z}^{+}\setminus\{1\},$ such that
\begin{equation}\label{eq2.52}
\lim\limits_{n\rightarrow\infty}R_n=\lim\limits_{n\rightarrow\infty}\frac{r_n-|z_n|}{\rho_n}=\infty.
\end{equation}
By \eqref{eq2.38} and \eqref{eq2.51} we see that the functions
\begin{equation}\label{eq2.53}
g_n(\zeta)=\rho^{-\alpha}_nf_n(z_n+\rho_n\zeta)\quad \text{with}\quad n\in \Bbb{Z}^{+}\setminus \{1\}
\end{equation}
are well defined in $|\zeta|<R_n.$ For any given positive constant $R,$ there exists some large positive integer $N_{R}$ that depends only upon $R$ such that $R<R_n$ when $n>N_{R}.$ Then, for each $\zeta\in \overline{D}_R(0),$ we have $|\zeta|<R_n$ when $n>N_{R},$ where and in what follows, $\overline{D}_R(0)=:\{\zeta\in\Bbb{C}: |\zeta|\leq R\}.$ Combining this with $R_n=\frac{r_n-|z_n|}{\rho_n},$ we derive $|z_n+\rho_n\zeta|<r_n$ for each $\zeta\in \overline{D}_R(0)$ when $n>N_{R}.$ This together with \eqref{eq2.49} and \eqref{eq2.53} gives
\begin{equation}\label{eq2.54}
\frac{|g'_n(\zeta)|}{1+|g_n(\zeta)|^2}=\frac{\left(1-\left|\frac{z_n}{r_n}\right|^2\right)^{\alpha+1}t_n^{\alpha+1}|f'_n(z_n+\rho_n\zeta)|}
{\left(1-\left|\frac{z_n}{r_n}\right|^2\right)^{2\alpha}t_n^{2\alpha}
+|f_n(z_n+\rho_n\zeta)|^2}
\end{equation}
for each $\zeta\in \overline{D}_R(0)$ when $n>N_{R}.$ By \eqref{eq2.43} and \eqref{eq2.54} we deduce
\begin{equation}\label{eq2.55}
\frac{|g'_n(0)|}{1+|g_n(0)|^2}=\frac{\left(1-\left|\frac{z_n}{r_n}\right|^2\right)^{\alpha+1}t_n^{\alpha+1}|f'_n(z_n)|}{\left(1-\left|\frac{z_n}{r_n}\right|^2\right)^{2\alpha}t_n^{2\alpha}
+|f_n(z_n)|^2}=1 \quad\text{with}\quad n\in\Bbb{Z}^{+}\setminus\{1\}.
\end{equation}
On the other hand, for each $\zeta\in \overline{D}_R(0)$ we have
\begin{equation}\label{eq2.56}
|z_n|^2-2R\rho_n-R^2\rho^2_n\leq |z_n+\rho_n\zeta|^2\leq |z_n|^2+2R\rho_n+R^2\rho^2_n
\end{equation}
By \eqref{eq2.50} and \eqref{eq2.56} we derive
\begin{equation}\label{eq2.57}
\lim\limits_{n\rightarrow\infty}\frac{r_n-|z_n|^2}{r_n-|z_n+\rho_n\zeta|^2}=1,
\end{equation}
uniformly on the compact set $\overline{D}_R(0)\subset \Bbb{C}.$  Therefore, it follows by \eqref{eq2.57} that for any large positive integer $n,$ say $n>N_R,$ there exists a positive quantity $\varepsilon_n$ that depends only upon $n,$ such that
\begin{equation}\label{eq2.58}
1-\varepsilon_n<\frac{r_n-|z_n|^2}{r_n-|z_n+\rho_n\zeta|^2}<1+\varepsilon_n\quad\text{and}\quad \lim\limits_{n\rightarrow\infty}\varepsilon_n=0.
\end{equation}
By \eqref{eq2.43}, \eqref{eq2.51}, \eqref{eq2.54} and \eqref{eq2.58} we derive
\begin{equation}\label{eq2.59}
\begin{split}
&\quad \frac{|g'_n(\zeta)|}{1+|g_n(\zeta)|^2}=\frac{\left(1-\left|\frac{z_n}{r_n}\right|^2\right)^{\alpha+1}t_n^{\alpha+1}|f'_n(z_n+\rho_n\zeta)|}
{\left(1-\left|\frac{z_n}{r_n}\right|^2\right)^{2\alpha}t_n^{2\alpha}
+|f_n(z_n+\rho_n\zeta)|^2}\\
&\leq \frac{(1+\varepsilon_n)^{1+\alpha}}{(1-\varepsilon_n)^{2\alpha}}\cdot \frac{\left(1-\left|\frac{z_n+\rho_n\zeta}{r_n}\right|^2\right)^{\alpha+1}t_n^{\alpha+1}|f'_n(z_n+\rho_n\zeta)|}
{\left(1-\left|\frac{z_n+\rho_n\zeta}{r_n}\right|^2\right)^{2\alpha}t_n^{2\alpha}
+|f_n(z_n+\rho_n\zeta)|^2}\\
&\leq \frac{(1+\varepsilon_n)^{1+\alpha}}{(1-\varepsilon_n)^{2\alpha}}\sup\limits_{|z|<r_n}\frac{\left(1-\left|\frac{z}{r_n}\right|^2\right)^{\alpha+1}t_n^{k+1}|f'_n(z)|}{\left(1-\left|\frac{z}{r_n}\right|^2\right)^{2\alpha}t_n^{2\alpha}
+|f_n(z)|^2}\\
\end{split}
\end{equation}
\begin{equation}\nonumber
\begin{split}
&=\frac{(1+\varepsilon_n)^{1+\alpha}}{(1-\varepsilon_n)^{2\alpha}}\cdot\frac{\left(1-\left|\frac{z_n}{r_n}\right|^2\right)^{\alpha+1}t_n^{\alpha+1}|f'_n(z_n)|}{\left(1-\left|\frac{z_n}{r_n}\right|^2\right)^{2\alpha}t_n^{2\alpha}
+|f_n(z_n)|^2}=\frac{(1+\varepsilon_n)^{1+\alpha}}{(1-\varepsilon_n)^{2\alpha}}<2\\
\end{split}
\end{equation}
for each $\zeta\in \overline{D}_R(0)$ and any large positive integer $n$ satisfying $n>N_R.$ Since $R$ is any given positive number, we have by \eqref{eq2.59} and Lemma \ref{lemma2.8} that $\{g_n\}$ is normal in $\Bbb{C}.$ Without loss of generality, we suppose that  $\{g_n\}$ itself converges to a meromorphic function $g$ spherically uniformly on any compact subset of $\Bbb{C}.$ By \eqref{eq2.55} we have
\begin{equation}\label{eq2.60}
\frac{|g'(0)|}{1+|g(0)|^2}=\lim\limits_{n\rightarrow\infty}\frac{|g'_n(0)|}{1+|g_n(0)|^2}=1.
\end{equation}
By \eqref{eq2.60} we see that $g'(0)\neq 0$ and so $g'\not\equiv 0.$ Therefore, $g$ is a non-constant meromorphic function in the complex plane. Moreover, for each
$\zeta\in\Bbb{C},$ there exists a positive number $R,$ such that $\zeta\in \overline{D}_R(0).$ Then, it follows by \eqref{eq2.52} that there exists some positive integer, say $N_R$ such that for $n>N_R$ we have $\zeta\in \overline{D}_R(0) \subset D_{R_n}(0).$
Combining this with \eqref{eq2.59} and the right equality of \eqref{eq2.58}, we derive
\begin{equation}\nonumber
g^{\#}(\zeta)=\frac{|g'(\zeta)|}{1+|g(\zeta)|^2}=\lim\limits_{n\rightarrow\infty}\frac{|g'_n(\zeta)|}{1+|g_n(\zeta)|^2}\leq \lim\limits_{n\rightarrow\infty}\frac{(1+\varepsilon_n)^{1+\alpha}}{(1-\varepsilon_n)^{2\alpha}}=g^{\#}(0)=1
\end{equation}
for each given $\zeta\in\Bbb{C}.$ This proves Lemma \ref{lemma2.15} for $0\leq \alpha<1.$
\vskip 2mm
\par {\bf (III)} From the proof of Lemma \ref{lemma2.15} for $0\leq\alpha<1$ in Remark \ref{remark2.2} (II) above, we see that $r_n$ and $r^{*}_n$ can be chosen as
$r_n=\frac{1}{n^{m}}$ and $r^{*}_n=\frac{1}{pn^{m}}$ for $n\in\Bbb{Z}^{+}\setminus\{1\}$ respectively,
where $m$ and $p$ are any given positive integers with $p\geq 2.$ Next, we use the lines of the proof of Lemma \ref{lemma2.15} for $0\leq\alpha<1$ in Remark \ref{remark2.2} (II), we also get the conclusion of Lemma \ref{lemma2.15} for $0\leq\alpha<1.$
\end{remark}
\vskip 2mm
\par From Remark \ref{remark2.2} (III) and the proof of Lemma \ref{lemma2.15} for $0\leq\alpha<1$ in Remark \ref{remark2.2} (II), we get the following result:
\vskip 2mm
\par
\begin{lemma}\rm{}\label{lemma2.16}
\textit{Let $\left\{f_m(z)\right\}_{m=1}^{\infty}$ be a family of holomorphic functions in a disc $D_{\delta_m}(z_0)=\{z\in\Bbb{C}:|z-z_0|<\delta_m\}\subset D_0,$ where $z_0\in D_0$ is a point, $D_0\subseteq\Bbb{C}$ is a domain in the complex plane, and $\{\delta_m\}_{m=1}^{\infty}$ is an infinite sequence of positive numbers such that $\delta_{m+1}\leq \delta_{m}$ for each positive integer $m\in\Bbb{Z}^{+},$ and let $\alpha$ be a real number
satisfying $-1<\alpha<1.$ Then, if $\{f_m(z)\}_{m=1}^{\infty}$ is not normal at the point
$z_0,$ there exist, for each $-1 <\alpha < 1$:
}
\vskip 2mm
\par (i) \,  \textit{ points $z_m\in D_{\delta_m}(z_0) ,$ $z_m\rightarrow z_0,$}
\vskip 2mm
\par (ii) \, \textit{positive numbers $\rho_m,$ $\rho_m\rightarrow 0^{+}$}
and
\vskip 2mm
\par (iii) \, \textit{an infinite subsequence of the family $\left\{f_m(z)\right\}_{m=1}^{\infty},$ say $\left\{f_m(z)\right\}_{m=1}^{\infty}$ itself, such that $\frac{f_m(z_n+\rho_m\zeta)}{\rho_m^{\alpha}}\rightarrow
h(\zeta)$ spherically uniformly on compact subset of $\Bbb{C},$
where $h$ is a non-constant entire function. The function $h$
may be taken to satisfy the normalization $h^{\#}(\zeta)\leq
h^{\#}(0)=1.$
}
\end{lemma}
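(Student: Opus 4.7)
The plan is to mirror the Pang--Zalcman rescaling argument laid out in Remark \ref{remark2.2}(II) essentially verbatim, replacing the fixed unit disc and the radii $r_n = 1/n$, $r_n^{*} = 1/(2n)$ by radii that sit inside the shrinking discs $D_{\delta_m}(z_0)$; the flexibility to do this is exactly what is granted by Remark \ref{remark2.2}(III). Throughout I assume $z_0 = 0$ and first treat $0 \leq \alpha < 1$ (the case $-1 < \alpha < 0$ is analogous via the version of Pang cited in Remark \ref{remark2.2}(I)). Set $r_m = \delta_m/2$ and $r^{*}_m = \delta_m/4$, so that $D_{r^{*}_m}(0) \subset D_{r_m}(0) \subset D_{\delta_m}(0)$ for every $m$.

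Because $\{f_m\}$ is not normal at $0$, Marty's criterion (Lemma \ref{lemma2.13}) fails on every neighborhood of $0$. Passing to a subsequence, I can find points $z^{*}_m \in \overline{D_{r^{*}_m}(0)}$ with $f_m^{\#}(z^{*}_m) > K_m$, where $K_m$ is chosen growing fast enough (specifically faster than $r_m^{-(\alpha+1)}$) that $K_m(1 - |z^{*}_m/r_m|^2)^{\alpha+1} > 1$ and $K_m r_m^{\alpha+1} \to \infty$; in particular $z^{*}_m \to 0$. Applying Lemma \ref{lemma2.14} to $f_m$ on $D_{r_m}(0)$ with the given $\alpha$ produces points $z_m \in D_{r_m}(0)$ and numbers $t_m \in (0,1)$ at which the supremum in Lemma \ref{lemma2.14} is attained and equals $1$. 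Repeating the chain of computations in \eqref{eq2.44}--\eqref{eq2.48}, with the rate $n$ there replaced by a suitably scaled $r_m^{-1}$, yields $t_m/r_m \to 0$ and hence $|z_m| \to 0$.

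Defining $\rho_m = (1 - |z_m/r_m|^2) t_m$, $R_m = (r_m - |z_m|)/\rho_m$, and $g_m(\zeta) = \rho_m^{-\alpha} f_m(z_m + \rho_m \zeta)$, the previous step yields $\rho_m \to 0^{+}$ and $R_m \to \infty$, and each $g_m$ is holomorphic on $D_{R_m}(0)$. The chain of inequalities in \eqref{eq2.59} gives $g_m^{\#}(\zeta) < 2$ on every compact subset of $\mathbb{C}$ for $m$ large, so by Marty's criterion $\{g_m\}$ is normal on $\mathbb{C}$; after extracting a subsequence, $g_m$ converges spherically uniformly on compacta to a meromorphic function $h$. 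Passing to the limit in \eqref{eq2.55} gives $h^{\#}(0) = 1$, so $h$ is non-constant, and the limit of \eqref{eq2.59} gives the normalization $h^{\#}(\zeta) \leq 1 = h^{\#}(0)$.

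Finally, $h$ is entire because each $g_m$ is holomorphic: if $h$ had a pole at some $\zeta_1 \in \mathbb{C}$, then on a small closed disc $\overline{D}$ about $\zeta_1$ containing no other poles of $h$, the restrictions $g_m|_{\partial D}$ would converge uniformly (in Euclidean sense) to the bounded holomorphic function $h|_{\partial D}$, giving a uniform bound on $|g_m|$ over $\partial D$, and hence over $D$ by the maximum principle; Montel's theorem would then force a subsequential limit of $g_m$ on $D$ to be holomorphic and equal to $h$, contradicting the pole. The main technical obstacle is ensuring $R_m \to \infty$ under the shrinking-disc hypothesis, which I resolve by calibrating the blow-up rate $K_m$ in the second paragraph large enough relative to $r_m^{-(\alpha+1)}$---exactly the freedom granted by Remark \ref{remark2.2}(III).
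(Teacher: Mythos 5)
Your overall strategy is the one the paper itself intends (its ``proof'' is only the one-line reduction to Remark \ref{remark2.2}(II)--(III)), and everything from the application of Lemma \ref{lemma2.14} onward --- the definitions of $\rho_m$, $R_m$, $g_m$, the Marty/normality step, $h^{\#}(0)=1$, and the maximum-principle argument showing $h$ is entire --- is fine once the initial selection is granted. The genuine gap is precisely that initial selection: you assert that non-normality at $z_0$ lets you ``find points $z^{*}_m\in\overline{D}_{r^{*}_m}(0)$ with $f_m^{\#}(z^{*}_m)>K_m$, where $K_m$ is chosen growing fast enough (specifically faster than $r_m^{-(\alpha+1)}$)''. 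Non-normality (in Marty's sense) only says that the spherical derivatives of the family are unbounded on every fixed neighbourhood of $z_0$; it gives you no control whatsoever on the \emph{rate} of that blow-up relative to the prescribed sequence $\delta_m$. In the classical setting of Remark \ref{remark2.2}(II) the prescribed rate $2^{1+\alpha}n^{2+\alpha}$ is attainable because all members live on one fixed disc and, for each $n$, you are free to pick \emph{whichever} member of the family exceeds that bound on $|z|\le r_n^{*}$; Remark \ref{remark2.2}(III) only says the radii $r_n,r_n^{*}$ there may be replaced by other concrete sequences --- it does not give you the freedom to dictate how fast the spherical derivatives of the \emph{given} $f_m$ grow. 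In Lemma \ref{lemma2.16} the pairing of the $m$-th function with the $m$-th (shrinking) disc is fixed in advance, and since any disc on which you apply Lemma \ref{lemma2.14} to $f_m$ must satisfy $r_m\le\delta_m$, your argument genuinely needs $\sup_{|z-z_0|\le\delta_m/4}f_m^{\#}(z)\cdot\delta_m^{\alpha+1}\to\infty$ along a subsequence; this is a quantitative hypothesis that ``not normal at $z_0$'' simply does not supply, and without it the key conclusions $t_m/r_m\to 0$ and $R_m\to\infty$ (the analogues of \eqref{eq2.46} and \eqref{eq2.52}) do not follow.

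Moreover the step cannot be repaired from the stated hypotheses when $\delta_m\to 0$: take $z_0=0$, $\delta_m=e^{-m}$ and $f_m(z)=mz$ on $D_{\delta_m}(0)$. Then $f_m^{\#}(0)=m\to\infty$, so the family is non-normal at $0$ in Marty's sense, yet for any $z_m\in D_{\delta_m}(0)$ and $\rho_m\to 0^{+}$ the rescaled functions $\rho_m^{-\alpha}f_m(z_m+\rho_m\zeta)$ are defined on $|\zeta|<(\delta_m-|z_m|)/\rho_m$, and exhausting $\Bbb{C}$ forces $\rho_m/\delta_m\to 0$, whence their derivatives $m\rho_m^{1-\alpha}\le m\delta_m^{1-\alpha}\to 0$ and every locally uniform limit is constant; so the conclusion of the lemma fails for this family. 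Thus the calibration you invoke is not a technicality but the whole content of the lemma: it holds trivially (it is just Lemma \ref{lemma2.15} on a fixed disc) when $\inf_m\delta_m>0$, while for $\delta_m\to 0$ one needs an additional quantitative link between the non-normality and $\delta_m$ (as would have to be verified in the paper's own application) that neither your proposal nor the paper establishes. As written, the proof has a genuine gap at this point.
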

\vskip 2mm
\par Next we follow Bergweiler \cite[p.153]{Bergweiler1993} to introduce some definitions and results in the theory of iterations of the meromorphic functions in the complex plane that play an important role in proving the main results in this paper: let  $f : \Bbb{C}\rightarrow\Bbb{C}\cup\{\infty\}$ be a meromorphic
function, where $\Bbb{C}$ is the complex plane, and we assume that $f$ is neither a constant nor a linear
transformation. We denote by  $f^{\circ n}$ the $n$-th iterate of $f,$ that is, $f^{\circ 0}(z) = z$ with $z\in\Bbb{C}$  and
$f^{\circ n}(z)= f(f^{\circ n-1}(z))$ with $z\in\Bbb{C}$ for $n\in\Bbb{Z}^{+}$. Then $f^{\circ n}(z)$ is defined for all $z \in\Bbb{C}$ except
for a countable set which consists of the poles of $f^{\circ 1},$ $f^{\circ 2},$ $\ldots,$ $f^{\circ n-1}.$ If $f$ is
rational, then $f$ has a meromorphic extension to $\Bbb{C}\cup\{\infty\},$ and we denote the extension
again by $f,$ we see that $f^{\circ n}$ is defined and meromorphic in $\Bbb{C}\cup\{\infty\}.$ But if $f$
is transcendental in the complex plane, it is not reasonable for us to define $f(\infty).$
\vskip 2mm
\par The basic objects studied in iteration theory are the Fatou set $F = F(f)$ and
the Julia set $J=J(f)$ of a meromorphic function $f.$ Roughly speaking, the
Fatou set is the set where the iterative behavior is relatively tame in the sense
that points close to each other behave similarly, while the Julia set is the set where chaotic phenomena take place. The formal definitions are
\begin{equation}\nonumber
 F=\left \{z \in \hat{\Bbb{C}}: \, \left\{f^{\circ n}\right\}_{n=1}^{+\infty} \, \, \text{is defined and normal in some neighborhood of} \, \, z\right\}
\end{equation}
and $J=J(f)=\hat{\Bbb{C}}\setminus F,$ where and in what follows, $\hat{\Bbb{C}}=\Bbb{C}\cup\{\infty\}.$  We also need the following  notion of the periodic point in iteration theory for proving the main results in this paper:
\begin{definition}\rm{}
 By definition, $z_0\in\Bbb{C}$ is called a periodic point of a non-constant meromorphic function $f$ if $f^{\circ n}(z_0) = z_0$ for
some positive integer $n$ satisfying $n \geq 1.$ In this case, the positive integer $n$ is called a period of the point $z_0,$ and the smallest positive integer $n$ with this property is called the minimal period of the point $z_0.$ For a periodic point $z_ 0\in\Bbb{C}$ 0f
minimal period $n,$  $(f^{\circ n})'(z_0)$ is called the multiplier of the point $z_0.$ We mention that $z_0 =\infty,$ which
can happen only for the rational function $f,$ of course, this has to be modified. In this case, the multiplier is defined to be $(g^{\circ n})'(0)$
where $g(z) = 1/f(1/z).$ A periodic point is called attracting, indifferent, or repelling accordingly as the
modulus of its multiplier is less than, equal to, or greater than $1.$ Periodic
points of multiplier $0$ are called super attracting. Some writers reserve the
term attracting for the case $0< \left|(f^{\circ n})'(z_0)\right| < 1,$ but we consider super attracting
as a special case of attracting. The multiplier of an indifferent periodic point is of the form $e^{2\pi\alpha}$ where $0 < \alpha < 1.$ We say that $z_0$ is rationally indifferent if $\alpha$ is rational and irrationally indifferent otherwise. Also, a point $z_0$ is called
pre-periodic if $f^{\circ n}(z_0)$ is periodic for some $n \geq 1.$ Finally, a periodic point of
period $1$ is called a fixed point. It is easy to see that attracting periodic points are in $F,$ while repelling
and rationally indifferent periodic points are in $J.$ For irrationally indifferent
periodic points the question whether they are in $F$ or $J$ is difficult to decide.
Both possibilities do occur. We refer the reader to the classical papers of Cremer
 \cite{Cremer1928,Cremer1938} and Siegel \cite{Siegel1942}, as well as more recent work of Yoccoz \cite{Yoccoz1988,Yoccoz1995}. An
exposition of these and other results, together with further references, can be
found in \cite{Marco1992}. The behavior of the iterates in the neighborhood of a fixed point (or, more
generally, a periodic point) is intimately connected with the solution of certain
functional equations. For most results in this direction, it is required only that
the function under consideration is defined in a neighborhood of the fixed point,
and it is usually irrelevant whether it extends to a rational or transcendental
meromorphic function. Therefore, we omit this topic here but refer to the
p`apers and books on iteration of rational functions cited in the introduction.
\end{definition}
\vskip 2mm
\par We recall the following results from Bergweiler \cite[p.153]{Bergweiler1993}:
\vskip 2mm
\par
\begin{lemma}\rm{}(\cite[p.161, Theorem 5]{Bergweiler1993})\label{lemma2.17} \textit{If $f$ is a transcendental meromorphic function in the complex plane and $n$ is a positive integer such that $n\geq 2,$ then $f$ has infinitely many repelling periodic points of minimal period $n.$}
\end{lemma}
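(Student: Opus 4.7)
The plan is to treat the statement as a three-part assertion: (a) $J(f)\ne\varnothing$ and is perfect; (b) $f$ has infinitely many periodic points of minimal period exactly $n$; and (c) all but finitely many of those periodic points of period $n$ are repelling. I would reduce the whole argument to iteration-theoretic facts available in Bergweiler's survey combined with a Nevanlinna-type counting.

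First I would record the foundational facts about the Julia set $J(f)$ of a transcendental meromorphic function. Near any essential singularity of any iterate $f^{\circ k}$, Montel's theorem (applied to the family $\{f^{\circ n}\}$) fails, so $J(f)\ne\varnothing$. Using the standard blow-up property of $J(f)$, together with Zalcman-type rescalings (\textit{cf.}~Lemma~\ref{lemma2.15}), one shows that $J(f)$ is uncountable and perfect, and that repelling periodic points of $f$ are dense in $J(f)$. These pieces are exactly the ingredients proved in \cite{Bergweiler1993}.

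The second step is the heart of the matter: exhibit infinitely many periodic points whose minimal period is exactly $n$. I would study $\Phi_n(z):=f^{\circ n}(z)-z$. Because $f^{\circ n}$ is transcendental meromorphic (outside the countable set of pre-poles), Picard's theorem and Nevanlinna's second main theorem applied to $\Phi_n$ give $N(r,1/\Phi_n)\sim T(r,\Phi_n)\sim T(r,f^{\circ n})$ up to a small term. The fixed points of $f^{\circ n}$ decompose as
\begin{equation*}
\{\Phi_n=0\}\;=\;\bigsqcup_{d\mid n}P_d,
\end{equation*}
where $P_d$ is the set of periodic points of minimal period $d$. For proper divisors $d\mid n$ with $d<n$, the counting function $N(r,1/\Phi_d)$ is dominated by $T(r,f^{\circ d})$, and the key growth inequality for iterates—$T(r,f^{\circ d})=o\bigl(T(r,f^{\circ n})\bigr)$ as $r\to\infty$ outside an exceptional set—then forces $|P_n|=\infty$. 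This is the step I expect to be the main obstacle, since for transcendental meromorphic $f$ one has to verify carefully that $\Phi_n$ inherits the Nevanlinna growth of $f^{\circ n}$ and that the growth-comparison estimate between $f^{\circ n}$ and $f^{\circ d}$ holds; these are the contents of the lemmas that Bergweiler imports from Baker and from his own paper with Terglane.

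Finally, to upgrade from ``infinitely many periodic points of period $n$'' to ``infinitely many \emph{repelling} ones'', I would invoke the standard dichotomy from complex dynamics. Each non-repelling cycle either lies in a Fatou component (attracting basin, parabolic basin, Siegel disk, Herman ring) or is irrationally indifferent in $J(f)$ with a critical or asymptotic orbit accumulating on it; in either case the cycle ``absorbs'' a singular value (or a full Fatou component) of $f$. A Shishikura-type count, adapted to the transcendental meromorphic setting as in \cite{Bergweiler1993}, bounds the number of non-repelling cycles of period $n$ by a quantity depending only on the singular set of $f^{\circ n}$, and in particular shows it is finite for each fixed $n$. Combining this bound with the infinitude established in the previous paragraph yields infinitely many repelling periodic points of minimal period exactly $n$, completing the proof.
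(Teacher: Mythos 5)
The paper contains no proof of Lemma \ref{lemma2.17}: it is quoted verbatim from Bergweiler's survey, so your sketch has to stand on its own, and its decisive third step fails. You want a Fatou--Shishikura-type bound saying that the non-repelling cycles of period $n$ are finite in number because they are controlled by the singular set of $f^{\circ n}$. For a transcendental meromorphic function the set of singular (critical and asymptotic) values can be infinite, and the finiteness conclusion itself is simply false: the entire function $f(z)=z-\sin z$ has a superattracting fixed point at every $z=2k\pi$, so a transcendental function can have infinitely many non-repelling cycles of a given period, and analogous examples exist for higher periods. Hence the strategy ``infinitely many periodic points of period $n$, only finitely many of them non-repelling'' cannot work; the repelling cycles must be produced directly. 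That is what the cited proof does: via Ahlfors' five-islands theorem one obtains, for infinitely many pairwise disjoint islands over a domain meeting $J(f)$, a branch of $f^{-n}$ mapping a Jordan domain compactly into itself, whose fixed point is a repelling periodic point of $f^{\circ n}$, with an additional argument forcing the minimal period to be exactly $n$.

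Your second step is also not available in the meromorphic setting. If $f$ has a pole that is not an omitted value, then already $f^{\circ 2}$ has essential singularities at the prepoles and is not meromorphic in $\mathbb{C}$; consequently $T(r,f^{\circ n})$, the second main theorem applied to $\Phi_n=f^{\circ n}-z$, and the comparison $T(r,f^{\circ d})=o\bigl(T(r,f^{\circ n})\bigr)$ for proper divisors $d$ of $n$ are undefined or unproved in this generality. That value-distribution route is how the \emph{entire} case was handled (Baker, Bergweiler); it does not transfer to meromorphic $f$, which is precisely why the survey's argument goes through Ahlfors' theory (and later proofs of the density of repelling cycles use Zalcman-type rescaling) instead. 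Finally, invoking the density of repelling periodic points in $J(f)$ (Lemma \ref{lemma2.18}) as an ingredient does not help with the minimal-period claim and is in any case a result of essentially the same depth as the statement you are trying to prove.
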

\vskip 2mm
\par
\begin{lemma}\rm{}(\cite[p.160, Theorem 4]{Bergweiler1993})\label{lemma2.18}\textit{ Let f be a non-constant  meromorphic function in the complex plane. Then, the Julia set $J=J(f)$ of the  meromorphic function $f$ is the closure of the set of repelling periodic points of f. }
\end{lemma}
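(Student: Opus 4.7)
The plan is to establish the two inclusions $\overline{R(f)} \subseteq J(f)$ and $J(f) \subseteq \overline{R(f)}$ separately, where $R(f)$ denotes the set of repelling periodic points of $f$ in $\Bbb{C}$.

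For the first inclusion, my approach will be local and will rest on Marty's criterion. Starting from a repelling periodic point $z_0$ of minimal period $n$ with multiplier $\lambda = (f^{\circ n})'(z_0)$, $|\lambda| > 1$, I would observe via the chain rule that $(f^{\circ kn})'(z_0) = \lambda^k$, so that the spherical derivatives $(f^{\circ kn})^{\#}(z_0)$ blow up as $k \to \infty$. Lemma \ref{lemma2.13} then precludes normality of $\{f^{\circ kn}\}_{k \geq 1}$ at $z_0$, forcing $z_0 \in J(f)$; closedness of $J(f)$ in $\hat{\Bbb{C}}$ completes this inclusion.

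For the second inclusion, I would fix $z_0 \in J(f)$ and an arbitrary neighborhood $U$ of $z_0$, and aim to exhibit a repelling periodic point of $f$ in $U$. The first step is to apply Pang-Zalcman's Lemma (Lemma \ref{lemma2.15}, with $\alpha = 0$) to the non-normal family $\{f^{\circ n}\}_{n \geq 1}$ at $z_0$, obtaining $z_k \to z_0$ in $U$, $\rho_k \to 0^+$, and $n_k \nearrow \infty$ such that
\[
G_k(\zeta) := f^{\circ n_k}(z_k + \rho_k \zeta) \longrightarrow G(\zeta)
\]
spherically uniformly on compact subsets of $\Bbb{C}$, with $G$ a non-constant meromorphic function on $\Bbb{C}$. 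The second step is to invoke Lemma \ref{lemma2.17} to select a repelling periodic point $w_\ast$ of $f$ that is not among the (at most two) Picard exceptional values of $G$; Hurwitz's theorem applied to $G_k - w_\ast$ will then yield points $u_k := z_k + \rho_k \zeta_k \in U$ tending to $z_0$, with $f^{\circ n_k}(u_k) = w_\ast$, i.e., a sequence of eventually periodic points accumulating at $z_0$.

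The main obstacle will be promoting these eventually periodic points into genuine repelling periodic points of $f$ inside $U$. My plan to handle this is to follow Baker's approach as recorded in \cite[p.~160, Theorem~4]{Bergweiler1993}: combine the non-normality of $\{f^{\circ n}|_U\}$ with Montel's three-value theorem, which shows that $\hat{\Bbb{C}} \setminus \bigcup_{n \geq 1} f^{\circ n}(U)$ contains at most two points; pick three distinct repelling periodic points $w_1, w_2, w_3$ of $f$ (supplied by Lemma \ref{lemma2.17}) lying outside this exceptional set; and then solve the fixed-point equation $f^{\circ N}(z) = z$ on suitably chosen single-valued holomorphic branches of $f^{\circ N}$ over subdomains of $U$, with branches selected by tracking the dynamics backward from the $w_i$. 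For all sufficiently large $N$ the resulting fixed points of $f^{\circ N}$ inherit expansion from the $w_i$ and are therefore repelling, producing a repelling periodic point of $f$ in $U$. Since $U$ was arbitrary, $z_0 \in \overline{R(f)}$, as required.
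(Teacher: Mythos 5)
First, a point of comparison: the paper does not prove Lemma \ref{lemma2.18} at all; it is quoted from Bergweiler's survey \cite[p.160, Theorem 4]{Bergweiler1993} and used as a black box, so there is no in-paper argument for your attempt to match. Judged on its own terms, your proposal has a genuine gap exactly at the point you yourself label ``the main obstacle.'' The first inclusion (repelling cycles lie in $J(f)$, via the chain rule and Marty's criterion, Lemma \ref{lemma2.13}) is fine, and the Pang--Zalcman/Hurwitz step is correct as far as it goes, but it only produces points $u_k$ with $f^{\circ n_k}(u_k)=w_\ast$, i.e.\ \emph{pre-periodic} points accumulating at $z_0$; these need not be periodic, so they do not place $z_0$ in the closure of the repelling periodic points. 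The step that would close this gap is precisely the content of the theorem (density of repelling cycles in $J$), and your plan for it is to ``follow Baker's approach as recorded in \cite[p.160, Theorem 4]{Bergweiler1993}'' --- that is, to appeal to the source of the very statement being proved. Moreover, the sketch you give of that approach is not faithful: Montel's three-value theorem plus backward branch-tracking does not suffice; Baker's proof for entire functions rests on Ahlfors' five islands theorem, and the general meromorphic case cited by Bergweiler is due to Baker, Kotus and L\"u and requires genuine control of inverse branches (one must also cope with the fact that iterates of a transcendental meromorphic function are undefined on the backward orbit of $\infty$).

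There is, however, a clean way to finish inside the rescaling framework you already set up (this is Schwick's argument, extended to the transcendental cases by Bargmann and Bergweiler): instead of solving $G_k(\zeta)=w_\ast$, solve the fixed-point equation $G_k(\zeta)=z_k+\rho_k\zeta$ directly. Choose $\zeta_0$ with $G(\zeta_0)=z_0$, $G'(\zeta_0)\neq 0$ and $G(\zeta_0)\neq\infty$ (possible after replacing $z_0$ by a nearby Julia point, since $J(f)$ is perfect and a non-constant meromorphic $G$ omits at most two values). Since $G_k(\zeta)-(z_k+\rho_k\zeta)\to G(\zeta)-z_0$ locally uniformly, Hurwitz's theorem yields $\zeta_k\to\zeta_0$ with $f^{\circ n_k}(p_k)=p_k$ for $p_k=z_k+\rho_k\zeta_k\to z_0$, and the multiplier satisfies $(f^{\circ n_k})'(p_k)=G_k'(\zeta_k)/\rho_k\to\infty$, so these periodic points are repelling for all large $k$. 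With that replacement your outline becomes a complete proof; as written, the key step is missing.
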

\vskip 2mm
\par The following result is due to Clunie \cite{Clunie1970}:
\vskip 2mm
\par
\begin{lemma}\rm{}(\cite{Clunie1970})\label{lemma2.19}\textit{ Let  $g$ be a transcendental meromorphic function in the complex plane, and let $h$ be a non-constant entire function. Then 
\begin{equation}\nonumber
\lim\limits_{r\rightarrow\infty}\frac{T(r,g(h))}{T(r,h)}=\infty.
\end{equation}
}
\end{lemma}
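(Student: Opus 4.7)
The statement is the classical theorem of J.~Clunie (1970) on the growth of compositions of a meromorphic and a non-constant entire function. My plan is to combine the growth dichotomy for transcendental meromorphic functions with a Polya-type lower bound for $T(r, g\circ h)$.

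First I would record two preparatory facts. Since $g$ is transcendental meromorphic, $T(R,g)/\log R \to \infty$ as $R \to \infty$; equivalently, for every constant $K>0$ there exists $R_0$ with $T(R,g) \geq K \log R$ for all $R \geq R_0$. Since $h$ is non-constant entire, Liouville's theorem yields $M(r,h) := \max_{|z|=r}|h(z)| \to \infty$, and the standard Nevanlinna estimate for entire functions gives $T(r,h) \leq \log^{+} M(r,h)$.

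The heart of the argument is to establish a Polya--Clunie lower bound of the form
\[
T(r, g\circ h) \;\geq\; T\bigl(M(r,h),\, g\bigr) + O(1)
\]
valid for all sufficiently large $r$ (outside at most a set of finite linear measure). To prove this, I would pick a value $a \in \Bbb{C}\cup\{\infty\}$ with Nevanlinna deficiency $\delta(a,g)=0$; existence is guaranteed by the defect relation $\sum_{a}\delta(a,g)\leq 2$, so that $N(R, 1/(g-a)) = (1+o(1))\,T(R,g)$. Each $a$-point $z_k$ of $g$ with $|z_k|\leq M(r,h)$ that also lies in the image $h(\{|z|\leq r\})$ contributes at least one $a$-point of $g\circ h$ in $\{|z|\leq r\}$. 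Since $h$ is non-constant entire, Picard's theorem asserts that $h$ omits at most one finite value, and a covering-surface argument (together with Nevanlinna's first fundamental theorem applied to $h-z_k$) shows that the $a$-points of $g$ missed by the image of $h$ form a negligible set, yielding the displayed inequality.

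Once the lower bound is in hand, the conclusion is immediate: fix any $K>0$ and take $r$ large enough that $M(r,h) \geq R_0$. Then
\[
T(r, g\circ h) \;\geq\; T\bigl(M(r,h),g\bigr) + O(1) \;\geq\; K\log M(r,h) + O(1) \;\geq\; K\,T(r,h) + O(1),
\]
so $\liminf_{r\to\infty} T(r,g\circ h)/T(r,h) \geq K$, and since $K$ is arbitrary the limit is $+\infty$. The main obstacle is the rigorous proof of the Polya--Clunie lower bound, because the image $h(\{|z|\leq r\})$ is not literally the disc $\{|w|\leq M(r,h)\}$; I would refine the argument by working with a slightly smaller disc $\{|w|\leq (1-\varepsilon)M(r,h)\}$, using a Schwarz-type estimate to guarantee that the relevant $a$-points of $g$ are attained by $h$, and controlling the contribution of any Picard exceptional value of $h$ via the deficiency relation $\sum_{v}\delta(v,h)\leq 2$.
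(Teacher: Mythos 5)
Your argument hinges on the ``Polya--Clunie lower bound'' $T(r,g\circ h)\ge T(M(r,h),g)+O(1)$, and that inequality is false; this is the genuine gap. Already for $g(w)=e^{w}$, $h(z)=e^{z}$ one has $T(r,g\circ h)=T(r,e^{e^{z}})\sim e^{r}/(2\pi^{3}r)^{1/2}$, whereas $T(M(r,h),g)=T(e^{r},e^{w})=e^{r}/\pi$, so the claimed bound fails by an unbounded factor (replacing $M(r,h)$ by $(1-\varepsilon)M(r,h)$ does not help). The sketch you give cannot repair it: the set $h(\{|z|\le r\})$ covers points $w$ with $|w|$ comparable to $M(r,h)$ only thinly (for $h=e^{z}$ the covering multiplicity drops from about $r/\pi$ at moderate $|w|$ to $O(1)$ near $|w|=e^{r}$), so the bulk of the $a$-points of $g$ in $|w|\le M(r,h)$ need contribute nothing to the $a$-points of $g\circ h$ in $|z|\le r$; no Picard, deficiency, or covering-surface argument makes them ``negligible,'' and for a meromorphic outer function $g$ no inequality of this shape is available at all. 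What the preimage count genuinely yields is only
$$N\Bigl(r,\tfrac{1}{g\circ h-a}\Bigr)\;\ge\;\sum_{k=1}^{q}N\Bigl(r,\tfrac{1}{h-w_{k}}\Bigr)+O(\log r),$$
for any finite collection $w_{1},\dots,w_{q}$ of distinct $a$-points of $g$, since distinct $w_{k}$ have disjoint $h$-preimages.

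That weaker inequality is exactly what Clunie's proof (the paper gives no proof of its own, it only cites \cite{Clunie1970}) runs on, and it is the missing idea here: choose $a$ so that $g-a$ has infinitely many zeros (possible because $g$ is transcendental, avoiding at most two exceptional values), fix $q$ of them, and apply the second fundamental theorem to the entire function $h$ at the values $w_{1},\dots,w_{q}$ to get $\sum_{k=1}^{q}N(r,1/(h-w_{k}))\ge (q-2)T(r,h)-S(r,h)$; combined with the first fundamental theorem for $g\circ h$ this gives $T(r,g\circ h)\ge (q-2-o(1))T(r,h)$ outside a set of finite measure, and letting $q\to\infty$ (with a routine monotonicity argument to remove the exceptional sets) yields the limit. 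Note that this route never compares $T(r,g\circ h)$ with $T(\cdot,g)$ at radius $M(r,h)$; even in the entire-$g$ case the correct P\'olya-type bounds involve $M(r/2,h)$ and multiplicative constants, and $\log M(r/2,h)$ need not dominate $T(r,h)$, so your final chain of inequalities would require reworking even where such bounds exist. Your preparatory facts ($T(R,g)/\log R\to\infty$ and $T(r,h)\le\log^{+}M(r,h)$) are fine but end up not being the right tools for this lemma.
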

\vskip 2mm
\par
\maketitle
\section{Notations and definitions for the proofs}
Before we prove the main results of this paper,  we first introduce the following notations and definitions: Let $f$ and $g$ be two non-constant meromorphic functions in the complex plane, and let $a$ be a value in the extended plane. Let
$\overline{N}_{E}(r,a)$ ``count'' those points in $\overline{N}(r,
1/(f-a)),$ where $a$ is taken by $f$ and $g$ with the same
multiplicity, and each point is counted only once, and let
$\overline{N}_0(r, a)$ be the reduced counting function of the
common $a$-points of $f$ and $g$ in $\overline{N}(r, 1/(f-a)),$
where $\overline{N}(r,1/(f-\infty))$ means $\overline{N}(r,f).$ We
say that $f$ and $g$ share the value $a$ CM$^*$, if
$$\overline{N}\left(r,\frac{1}{f-a}\right)-\overline{N}_{E}(r,a)=S(r,f) \quad \text{and} \quad
\overline{N}\left(r,\frac{1}{g-a}\right)-\overline{N}_{E}(r,a)=S(r,g).$$
These definitions can be found in \cite{YangYi2003}, where CM$^*$ here is denoted by ``CM'' in \cite{YangYi2003}. We also need the following definition:
\vskip 2mm
\par
\begin{definition}\rm{}(\cite[Definition 1]{Lahiri2001}) Let $p$ be a positive integer and $a\in \Bbb{C}\bigcup\{\infty\}.$ Next we denote by $N_{p)}\left(r,\frac{1}{f-a}\right)$ the counting function of those $a$-points of $f$(counted with proper multiplicities) whose multiplicities are not greater than $p,$ and denote by $N_{(p}\left(r,\frac{1}{f-a}\right)$ the counting function of those $a$-points of $f$ (counted with proper multiplicities) whose multiplicities are not less than $p.$ We denote by  $\overline{N}_{p)}\left(r,\frac{1}{f-a}\right)$ and  $\overline{N}_{(p}\left(r,\frac{1}{f-a}\right)$ the reduced forms of $N_{p)}\left(r,\frac{1}{f-a}\right)$ and $N_{(p}\left(r,\frac{1}{f-a}\right)$ respectively. Here  $N_{p)}\left(r,\frac{1}{f-\infty}\right),$ $\overline{N}_{p)}\left(r,\frac{1}{f-\infty}\right),$ $N_{(p}\left(r,\frac{1}{f-\infty}\right)$ and
$\overline{N}_{(p}\left(r,\frac{1}{f-\infty}\right)$ mean $N_{p)}\left(r, f\right),$ $\overline{N}_{p)}\left(r, f\right),$ $N_{(p}\left(r, f\right)$ and $\overline{N}_{(p}\left(r, f\right)$ respectively.
\end{definition}
\vskip 2mm
\par
\begin{definition}\rm{(\cite[Definition 2.1]{GundersenHayman2004})}
For a meromorphic function $f$ satisfying $f\not\equiv 0$ and a positive
integer $j,$ let $n_j\left(r, \frac{1}{f}\right)$ denote the number of zeros of $f$ in $\{z : |z|< r\},$ counted in
the following manner: a zero of $f$ of multiplicity $m$ is counted exactly $k$ times where
$k = \min\{ m, j\}.$ Then let $N_j\left(r, \frac{1} {f}\right)$ denote the corresponding integrated counting
function; that is
\begin{equation}\nonumber
N_j\left(r, \frac{1} {f}\right)=n_j\left(0, \frac{1} {f}\right)\log r+\int_0^r\frac{n_j\left(t,\frac{1}{f}\right)-n_j\left(0, \frac{1} {f}\right)}{t}dt.
\end{equation}
Regarding the well-known integrated counting functions $\overline{N}\left(r, \frac{1}{f}\right)$ and $N\left(r, \frac{1}{f}\right),$
we see that $\overline{N}\left(r, \frac{1}{f}\right)\leq N_j\left(r, \frac{1}{f}\right) \leq N\left(r, \frac{1}{f}\right),$ $N_j\left(r, \frac{1}{ f}\right) \leq  j\overline{N}\left(r, \frac{1}{ f}\right)$ and $N_1\left(r, \frac{1}{f}\right) = \overline{N}\left(r, \frac{1}{f}\right).$
\end{definition}
\vskip 2mm
\par Secondly, we need generalizations of previous definitions. Next we Let $f_1,$ $f_2,$ $\cdots,$ $f_k$ be $k$ non-constant meromorphic functions. We denote by $\overline{N}_0(r, f_1, f_2, \cdots, f_k)$ the reduced counting function of common poles of $f_1,$ $f_2,$ $\cdots,$ $f_k$ in $|z|<r$ and denote by
$\overline{N}_0\left(r, \frac{1}{f_1}, \frac{1}{f_2}, \cdots, \frac{1}{f_k}\right)$ the reduced counting function of common zeros of  $f_1,$ $f_2,$ $\cdots,$ $f_k$ in $|z|<r.$ Restrictions can be given in these definitions. For instance, the notation $\overline{N}_0\left(r,\frac{1}{f},\frac{1}{g'}, \frac{1}{h} |g\neq 0\right)$ will mean the reduced counting function of common zeros of $f,$ $g',$ and $h$ in $|z|<r$ that are not zeros of $g.$ The notation $\overline{N}_0\left(r, \frac{1}{h'}, \frac{1}{g}\left.\right| h\neq 0, f\neq 0\right)$ will mean the reduced counting function of common zeros of $h'$ and $g$ in $|z|<r$ that are not zeros of $h$ and $f.$ The notation $\overline{N}_0\left(r, \frac{1}{f_1}, \frac{1}{f_2}, \frac{1}{f_3}, \frac{1}{f_3}\left.\right| h= 0 \, \, \text{or} \, \, f= 0\right)$ will mean the reduced counting function of common zeros of $f_1,$ $f_2,$ $f_3,$ $f_4$ in $|z|<r$ that are either zeros of $h$ or zeros of $f.$
\vskip 2mm
\par Thirdly, we denote by $\overline{N}_{(1,1,1)}(r,f_1, f_2, f_3)$ the reduced counting function of common simple poles of $f_1,$ $f_2$ and $f_3$ in $|z|<r,$ and denote by $\overline{N}_{(1,1)}\left(r,\frac{1}{f_1},\frac{1}{f_2}\right)$ the reduced counting function of common simple zeros of $f_1$ and $f_2$ in $|z|<r.$
\vskip 2mm
\par
\section{\bf Proof of Theorem \ref{Theorem1.10}}
 Suppose that there exist three transcendental meromorphic functions $f,$ $g$ and $h$
satisfying \eqref{eq1.1} for $n=8.$
\vskip 2mm
\par We consider the following two cases:
\vskip 2mm
\par{\bf Case 1.} \, Suppose that  $f^8,$ $g^8$ and $h^8$ are linearly dependent. Then there exist three constants $c_1,$ $c_2,$  $c_3$ satisfying $|c_1|+|c_2|+|c_3|>0$ such that
\begin{equation}\label{eq4.1}
c_1f^8+c_2g^8+c_3h^8=0.
\end{equation}
Without loss of generality, we suppose that $c_3\neq 0.$ Then, by substituting \eqref{eq4.1} into $f^8+g^8+h^8=1,$ we have
\begin{equation}\label{eq4.2}
\left(1-\frac{c_1}{c_3}\right)f^8+\left(1-\frac{c_2}{c_3}\right)g^8=1.
\end{equation}
By \eqref{eq4.2} and the well-known result from Gross \cite{Gross1966} that the equation $u^8+v^8=1$ cannot have non-constant meromorphic solutions $u$ and $v,$ we get a contradiction.
\vskip 2mm
\par{\bf Case 2.} \, Suppose that $f^8,$ $g^8$ and $h^8$ are linearly independent. First of all, by Lemma \ref{lemma2.11} we can deduce that $f,$ $g$ and $h$ share $0$ and $\infty$ CM* such that
\begin{equation}\label{eq4.3}
\begin{split}
\overline{N}_{1)}(r,f)&=\overline{N}_{1)}(r,g)+S(r)=\overline{N}_{1)}(r,h)+S(r)=\frac{1}{3}T(r)+S(r)\\
                      &=T(r,f)+S(r)=T(r,g)+S(r)=T(r,h)+S(r)\\
                      &=\overline{N}_0(r,f,g,h)+S(r)
\end{split}
\end{equation}
and
\begin{equation}\label{eq4.4}
\begin{split}
\overline{N}_{1)}\left(r,\frac{1}{f}\right)&=\overline{N}_{1)}\left(r,\frac{1}{g}\right)+S(r)
=\overline{N}_{1)}\left(r,\frac{1}{h}\right)+S(r)=\frac{1}{3}T(r)+S(r)\\
 &=T(r,f)+S(r)=T(r,g)+S(r)=T(r,h)+S(r),
\end{split}
\end{equation}
where and in what follows, $T(r)$ and $S(r)$ are defined as in Lemma \ref{lemma2.2}.
Also by Lemma \ref{lemma2.11} we have
\begin{equation}\label{eq4.5}
\overline{N}_0\left(r,\frac{1}{f}, \frac{1}{g}\right)+\overline{N}_0\left(r,\frac{1}{g}, \frac{1}{h}\right)+\overline{N}_0\left(r,\frac{1}{h}, \frac{1}{f}\right)=S(r),
\end{equation}
which means that any two of the functions $f,$ $g$ and $h$ almost have no common zeros. Moreover, by \eqref{eq4.3}-\eqref{eq4.5} we deduce
\begin{equation}\label{eq4.6}
 \begin{split}
 \frac{1}{3}T(r)&=T(r,f)+S(r)=T(r,g)+S(r)=T(r,h)+S(r)\\
&=\overline{N}_{1)}\left(r,\frac{1}{f}\right)+S(r)=\overline{N}_{1)}\left(r,\frac{1}{g}\right)+S(r)\\
&=\overline{N}_{1)}\left(r,\frac{1}{h}\right)+S(r)=\overline{N}_{1)}(r,f)+S(r)\\
&=\overline{N}_{1)}(r,g)+S_1(r)=\overline{N}_{1)}(r,h)+S(r)\\
 &=\overline{N}_{(1,1,1)}(r, f, g, h)+S(r)\\
 \end{split}
 \end{equation}
 and
\begin{equation}\label{eq4.7}
 \overline{N}_{(1,1)}\left(r,\frac{1}{f}, \frac{1}{g}\right)+\left(r,\frac{1}{g}, \frac{1}{h}\right)
  +\overline{N}_{(1,1)}\left(r,\frac{1}{h}, \frac{1}{f}\right)=S(r).
\end{equation}
\vskip 2mm
\par On the other hand, by \eqref{eq2.1}, \eqref{eq2.7} and Theorem \ref{TheoremA} we have for $n=m=k=8$ that
\begin{equation} \label {eq4.8}
 W(f^8, g^8, h^8)=\left|\aligned (f^8)' &\quad  (g^8)'\\
                                   (f^8)'' &\quad  (g^8)''
                                   \endaligned
                                  \right|= \left|\aligned (g^8)' &\quad  (h^8)'\\
                                  (g^8)'' &\quad  (h^8)''\endaligned\right| = \left|\aligned (h^8)' &\quad  (f^8)'\\
                                  (h^8)'' &\quad  (f^8)''\endaligned\right|= \alpha f^6g^6h^6,
\end{equation}
where $\alpha$ is a small function of $f,$ $g$ and $h$ such that $\alpha\not\equiv 0, \infty.$
\vskip 2mm
\par On the other hand, by taking the first derivatives of two sides of
\eqref{eq1.1} for $n=8$ we have
\begin{equation}\label {eq4.9}
f^7f'+g^7g'+h^7h'=0,
\end{equation}
and so we have
\begin{equation}\label{eq4.10}
g^7g'\left(\frac{f^7f'}{g^7g'}+1\right)=-h^7h'
\end{equation}
and
\begin{equation}\label {eq4.11}
\frac{f^7f'}{g^7g'}+\frac{h^7h'}{g^7g'}+1=0.
\end{equation}
Now we prove the following claim:
\vskip 2mm
\par {\bf  Claim I.} The following items hold: (i) $\frac{f^7f'}{g^7g'}+1$ and $-h^7h'$ share $0$ CM*; (ii) $\frac{g^7g'}{h^7h'}+1$ and $-f^7f'$ share $0$ CM*; (iii) $\frac{h^7h'}{f^7f'}+1$ and $-g^7g'$ share $0$ CM*.
\vskip 2mm
\par Without loss of generality, next we only prove Claim I (i): First of all, by \eqref{eq4.3} we have
\begin{equation}\label {eq4.12}
\overline{N}\left(r, -h^7h'\right)=\overline{N}_0\left(r, -h^7h', g^7g'\right)+S(r)=\overline{N}_0\left(r, h, g\right)+S(r)
=T(r)+S(r).
\end{equation}
 By \eqref{eq4.12} we have
\begin{equation}\label {eq4.13}
\overline{N}_0\left(r, \frac{1}{-h^7h'}, g^7g'\right)=S(r).
\end{equation}
Next we prove
\begin{equation}\label {eq4.14}
\overline{N}_0\left(r, \frac{1}{-h^7h'}, \frac{1}{g^7g'}\right)=S(r).
\end{equation}
For this purpose, now we let $z_0$ be a common zero of $-h^7h'$ and $g^7g'$ in the complex plane. Then at least one of the following four cases must hold: $h'(z_0)=g(z_0)=0,$ $h'(z_0)=g'(z_0)=0,$  $h(z_0)=g(z_0)=0$ or $h(z_0)=g'(z_0)=0.$ We discuss these four cases as follows:
\vskip 2mm
\par {\bf  (1).}  Suppose that $h'(z_0)=g(z_0)=0.$ Then, by \eqref{eq4.8} we can deduce that the multiplicity of $z_0$ as a zero of
\begin{equation}\label {eq4.15}
 \left|\aligned (g^8)' &\quad  (h^8)'\\
                                  (g^8)'' &\quad  (h^8)''\endaligned\right|=(g^8)' (h^8)''-(h^8)' (g^8)''= \alpha f^6g^6h^6
   \end{equation}
is not less than $7.$ We discuss this as follows:
\vskip 2mm
\par Suppose that $h(z_0)\neq 0$ and  $f(z_0)\neq 0.$ Then, by comparing the multiplicities of $z_0$ as the zeros of two sides of  \eqref{eq4.15} we deduce that $z_0$ is a zero of $\alpha.$
Therefore,
\begin{equation}\label {eq4.16}
\overline{N}_0\left(r, \frac{1}{h'}, \frac{1}{g}\left.\right| h\neq 0, f\neq 0\right)\leq N\left(r,\frac{1}{\alpha}\right)\leq T(r,\alpha)+O(1)=S(r),
   \end{equation}
and so
\begin{equation}\label {eq4.17}
\overline{N}_0\left(r, \frac{1}{-h^7h'}, \frac{1}{g^7g'}, \frac{1}{h'}, \frac{1}{g}\left.\right| h\neq 0, f\neq 0\right)=S(r).
   \end{equation}
 \vskip 2mm
\par Suppose that $h(z_0)=0$ or $f(z_0)=0.$ Then, by the supposition $h'(z_0)=g(z_0)=0$ we can see that $z_0$ is a common zero of $h$ and $g,$ or a common zero of $f$ and $g.$ Moreover,  by \eqref{eq4.4} and \eqref{eq4.5} we have
\begin{equation}\label {eq4.18}
\overline{N}_0\left(r, \frac{1}{h'}, \frac{1}{g},\frac{1} {h} \left.\right|h= 0 \, \, \text{or} \, \, f= 0\right)=S(r)
   \end{equation}
and
 \begin{equation}\label {eq4.19}
\overline{N}_0\left(r, \frac{1}{h'}, \frac{1}{g}, \frac{1}{f}\left.\right| h= 0 \, \, \text{or} \, \, f= 0 \right)=S(r),
   \end{equation}
and so
\begin{equation}\label {eq4.20}
\begin{split}
&\quad \overline{N}_0\left(r, \frac{1}{-h^7h'}, \frac{1}{g^7g'}, \frac{1}{h'}, \frac{1}{g}\left.\right| h= 0 \, \, \text{or} \, \, f= 0\right)\\
&\leq\overline{N}_0\left(r, \frac{1}{h^7h'}, \frac{1}{g^7g'}, \frac{1}{h'}, \frac{1}{g},\frac{1}{h}\left.\right|f= 0\right)
+\overline{N}_0\left(r, \frac{1}{h^7h'}, \frac{1}{g^7g'}, \frac{1}{h'}, \frac{1}{g},\frac{1}{f}\left.\right| h= 0\right)\\
\end{split}
\end{equation}
\begin{equation}\nonumber
\begin{split}
&\leq \overline{N}_0\left(r,\frac{1}{g}, \frac{1}{h}\right)+\overline{N}_0\left(r,\frac{1}{f}, \frac{1}{g}\right)+\overline{N}_0\left(r,\frac{1}{h}, \frac{1}{f}\right)=S(r).
  \end{split}
   \end{equation}
\vskip 2mm
\par By \eqref{eq4.17} and \eqref{eq4.20} we have
\begin{equation}\label {eq4.21}
\begin{split}
 \overline{N}_0\left(r, \frac{1}{-h^7h'}, \frac{1}{g^7g'}, \frac{1}{h'}, \frac{1}{g}\right)
&\leq \overline{N}_0\left(r, \frac{1}{-h^7h'}, \frac{1}{g^7g'}, \frac{1}{h'}, \frac{1}{g}\left.\right| h\neq 0, f\neq 0\right)\\
&\quad+\overline{N}_0\left(r, \frac{1}{-h^7h'}, \frac{1}{g^7g'}, \frac{1}{h'}, \frac{1}{g}\left.\right| h= 0 \, \, \text{or} \, \, f= 0\right)\\
&=S(r).
  \end{split}
   \end{equation}
\vskip 2mm
\par {\bf  (2).}  Suppose that  $h'(z_0)=g'(z_0)=0.$ Then, by \eqref{eq4.9} we have $f^7(z_0)f'(z_0)=0,$ and so either $f(z_0)=0$ or $f'(z_0)=0.$  We discuss this as follows:
\vskip 2mm
\par Suppose that $f(z_0)=0.$ Then, by \eqref{eq4.8} and the supposition  $h'(z_0)=g'(z_0)=0,$ we deduce that the multiplicity of $z_0$ as a zero of
\begin{equation}\label {eq4.22}
\left|\aligned (f^8)' &\quad  (g^8)'\\
 (f^8)'' &\quad  (g^8)''
                                   \endaligned\right|=(f^8)'(g^8)''-(f^8)''(g^8)'= \alpha f^6g^6h^6
    \end{equation}
 and
     \begin{equation}\label {eq4.23}
     \left|\aligned (h^8)' &\quad  (f^8)'\\
                                  (h^8)'' &\quad  (f^8)''\endaligned\right|=(h^8)' (f^8)''-(h^8)'' (f^8)'= \alpha f^6g^6h^6
   \end{equation}
 is not less than $7.$ This implies that, possibly $z_0$ is a multiple zero of $f,$ or a zero of one of $g$ and $h,$ or a zero of $\alpha.$  Therefore, by \eqref{eq4.4} and \eqref{eq4.5}, we deduce
\begin{equation}\nonumber
\begin{split}
\overline{N}_0\left(r,\frac{1}{h'}, \frac{1}{g'},\frac{1}{f}\right)&\leq \overline {N}_{(2}\left(r,\frac{1}{f}\right)+ \overline{N}_0\left(r,\frac{1}{f},\frac{1}{g}\right)+\overline{N}_0\left(r,\frac{1}{h},\frac{1}{f}\right)+N\left(r,\frac{1}{\alpha}\right)\\
&\leq T(r,\alpha)+S(r)=S(r),
 \end{split}
  \end{equation}
and so
\begin{equation}\label {eq4.24}
\overline{N}_0\left(r,\frac{1}{-h^7h'}, \frac{1}{g^7g'},\frac{1}{h'}, \frac{1}{g'},\frac{1}{f}\right)\leq \overline{N}_0\left(r,\frac{1}{h'}, \frac{1}{g'},\frac{1}{f}\right)= S(r).
 \end{equation}
 \vskip 2mm
\par Suppose that $f'(z_0)=0.$ Then, we deduce by \eqref{eq4.8} and the supposition $g'(z_0)=h'(z_0)=0$ that possibly $z_0$ is a multiple zero of one of $f,$ $g$ and $h,$ or a zero of $\alpha.$ Therefore, by \eqref{eq4.4} we deduce
\begin{equation}\nonumber
\begin{split}
\overline{N}_0\left(r,\frac{1}{h'}, \frac{1}{g'},\frac{1}{f'}\right)
&\leq \overline{N}_0\left(r,\frac{1}{f},\frac{1}{f'}\right)+\overline{N}_0\left(r,\frac{1}{g},\frac{1}{g'}\right)
+\overline{N}_0\left(r,\frac{1}{h},\frac{1}{h'}\right)+N\left(r,\frac{1}{\alpha}\right)\\
&\leq\overline{N}_{(2}\left(r,\frac{1}{f}\right)+\overline{N}_{(2}\left(r,\frac{1}{g}\right)+\overline{N}_{(2}\left(r,\frac{1}{h}\right)+T(r,\alpha)+O(1)\\
&=S(r),
 \end{split}
 \end{equation}
 and so
 \begin{equation}\label {eq4.25}
\begin{split}
\overline{N}_0\left(r,\frac{1}{-h^7h'}, \frac{1}{g^7g'}, \frac{1}{h'}, \frac{1}{g'},\frac{1}{f'}\right)&\leq \overline{N}_0\left(r,\frac{1}{h'}, \frac{1}{g'},\frac{1}{f'}\right)=S(r).
 \end{split}
 \end{equation}
\vskip 2mm
\par
By \eqref{eq4.24}, \eqref{eq4.25} and the above analysis we have
\begin{equation}\label{eq4.26}
\begin{split}
\overline{N}_0\left(r,\frac{1}{-h^7h'}, \frac{1}{g^7g'},\frac{1}{h'}, \frac{1}{g'}\right)
&\leq \overline{N}_0\left(r,\frac{1}{-h^7h'}, \frac{1}{g^7g'},\frac{1}{h'}, \frac{1}{g'},\frac{1}{f}\right)\\
&\quad+\overline{N}_0\left(r,\frac{1}{-h^7h'}, \frac{1}{g^7g'}, \frac{1}{h'}, \frac{1}{g'},\frac{1}{f'}\right)=S(r).
  \end{split}
 \end{equation}
\vskip 2mm
\par {\bf  (3).} Suppose that  $h(z_0)=g(z_0)=0.$ Then from \eqref{eq4.5} we have
 \begin{equation}\label{eq4.27}
\overline{N}_0\left(r, \frac{1}{-h^7h'}, \frac{1}{g^7g'}, \frac{1}{h}, \frac{1}{g} \right)=S(r).
\end{equation}
 \vskip 2mm
\par {\bf  (4).} Suppose that $h(z_0)=g'(z_0)=0$ holds. Then, in the same manner as in the above case (1) we deduce by \eqref{eq4.4}, \eqref{eq4.5} and \eqref{eq4.8} that
\begin{equation}\label {eq4.28}
\overline{N}_0\left(r, \frac{1}{-h^7h'}, \frac{1}{g^7g'}, \frac{1}{h}, \frac{1}{g'}\left.\right| g\neq 0, f\neq 0\right)\leq \overline{N}_0\left(r, \frac{1}{g'}, \frac{1}{h}\left.\right| g\neq 0, f\neq 0\right)=S(r)
   \end{equation}
and
\begin{equation}\label{eq4.29}
\begin{split}
&\quad \overline{N}_0\left(r, \frac{1}{-h^7h'}, \frac{1}{g^7g'}, \frac{1}{h}, \frac{1}{g'}\left.\right| g= 0 \, \, \text{or} \, \, f= 0\right)\\
\end{split}
\end{equation}
\begin{equation}\nonumber
\begin{split}
&\leq\overline{N}_0\left(r, \frac{1}{h^7h'}, \frac{1}{g^7g'},\frac{1}{h}, \frac{1}{g'}, \frac{1}{g}\left.\right|f\neq 0\right)
+\overline{N}_0\left(r, \frac{1}{h^7h'}, \frac{1}{g^7g'}, \frac{1}{h}, \frac{1}{g'},\frac{1}{f}\left.\right| g\neq 0\right)\\
 &\quad+\overline{N}_0\left(r,\frac{1}{f}, \frac{1}{g}\right)
\leq \overline{N}_0\left(r,\frac{1}{g}, \frac{1}{h}\right)+\overline{N}_0\left(r,\frac{1}{h}, \frac{1}{f}\right)+\overline{N}_0\left(r,\frac{1}{f}, \frac{1}{g}\right)=S(r).
  \end{split}
   \end{equation}
By \eqref{eq4.28} and \eqref{eq4.29} we have
\begin{equation}\label{eq4.30}
\begin{split}
\overline{N}_0\left(r, \frac{1}{-h^7h'}, \frac{1}{g^7g'}, \frac{1}{h}, \frac{1}{g'}\right)&\leq
\overline{N}_0\left(r, \frac{1}{-h^7h'} , \frac{1}{g^7g'}, \frac{1}{h}, \frac{1}{g'}\left.\right| g\neq 0, f\neq 0\right)\\
&\quad +\overline{N}_0\left(r, \frac{1}{-h^7h'}, \frac{1}{g^7g'}, \frac{1}{h}, \frac{1}{g'}\left.\right| g= 0 \, \, \text{or} \, \, f= 0\right)\\
&=S(r).
\end{split}
\end{equation}
Finally, by \eqref{eq4.21}, \eqref{eq4.26}, \eqref{eq4.27}, \eqref{eq4.30} and the above analysis in cases (1)-(4) we have
\begin{equation}\label{eq4.31}
\begin{split}
\overline{N}_0\left(r, \frac{1}{-h^7h'}, \frac{1}{g^7g'}\right)
&\leq \overline{N}_0\left(r, \frac{1}{-h^7h'}, \frac{1}{g^7g'}, \frac{1}{h'}, \frac{1}{g}\right)+\overline{N}_0\left(r,\frac{1}{-h^7h'}, \frac{1}{g^7g'},\frac{1}{h'}, \frac{1}{g'}\right)\\
&\quad +\overline{N}_0\left(r, \frac{1}{-h^7h'}, \frac{1}{g^7g'}, \frac{1}{h}, \frac{1}{g} \right)+\overline{N}_0\left(r, \frac{1}{-h^7h'}, \frac{1}{g^7g'}, \frac{1}{h}, \frac{1}{g'}\right)\\
&=S(r),
\end{split}
\end{equation}
which is \eqref{eq4.14}. By \eqref{eq4.10}, \eqref{eq4.13} and \eqref{eq4.14} we have Claim I(i). Next we prove the following claim:
\vskip 2mm
\par {\bf  Claim II.} \, The following equalities hold:
\begin{equation}\nonumber
\begin{split}
\rm{(i)}\quad &T\left(r,\frac{f^7f'}{g^7g'}\right)=\overline{N}\left(r,\frac{f^7f'}{g^7g'}\right)+\overline{N}\left(r,\frac{g^7g'}{f^7f'}\right)+\overline{N}\left(r, \frac{1}{\frac{f^7f'}{g^7g'}+1}\right)+S(r),\\
\rm{(ii)}\quad &T\left(r,\frac{g^7g'}{h^7h'}\right)
=\overline{N}\left(r,\frac{g^7g'}{h^7h'}\right)+\overline{N}\left(r,\frac{h^7h'}{g^7g'}\right)+\overline{N}\left(r,\frac{1}{\frac{g^7g'}{h^7h'}+1}\right)+S(r),\\
\rm{(iii)} \quad &T\left(r,\frac{h^7h'}{f^7f'}\right)=\overline{N}\left(r,\frac{h^7h'}{f^7f'}\right)+\overline{N}\left(r,\frac{f^7f'}{h^7h'}\right)+\overline{N}\left(r,\frac{1}{\frac{h^7h'}{f^7f'}+1}\right)+S(r),\\
\rm{(iv)} \quad  &3T(r) =T\left(r,\frac{f^7f'}{g^7g'}\right)+S(r)=T\left(r,\frac{g^7g'}{h^7h'}\right)+S(r)=T\left(r,\frac{h^7h'}{f^7f'}\right)+S(r),\\
 \rm{(v)}\quad &2T(r,f)=N\left(r,\frac{1}{f'}\right)+S(r)=N_0\left(r,\frac{1}{f'}\right)+S(r)= \overline{N}_0\left(r,\frac{1}{f'}\right)+S(r),\\
\rm{(vi)}\quad  &2T(r,g)=N \left(r,\frac{1}{g'}\right)+S(r)=N_0\left(r,\frac{1}{g'}\right)+S(r)= \overline{N}_0\left(r,\frac{1}{g'}\right)+S(r),\\
\rm{(vii)}\quad &2T(r,h)=N\left(r,\frac{1}{h'}\right)=N_0\left(r,\frac{1}{h'}\right)+S(r)= \overline{N}_0\left(r,\frac{1}{h'}\right)+S(r),
 \end{split}
  \end{equation}
where and in what follows, $N_0\left(r,\frac{1}{f'}\right)$ denotes the counting function of those zeros in $N\left(r,\frac{1}{f'}\right)$ that are not zeros of $f.$  $N_0\left(r,\frac{1}{g'}\right)$ and $N_0\left(r,\frac{1}{h'}\right)$ have similar definitions.
\vskip 2mm
\par Indeed, by Claim I(i) we know that $\frac{f^7f}{g^7g'}+1$ and $-h^7h'$ share $0$ CM*, and by \eqref{eq4.4} we know that almost all the zeros of $f,$ $g$ and $h$ in the complex plane are simple zeros such that
 \begin{equation}\label{eq4.32}
 N_{(2}\left(r,\frac{1}{f}\right)+ N_{(2}\left(r,\frac{1}{g}\right)+ N_{(2}\left(r,\frac{1}{h}\right)=S(r).
 \end{equation}
By \eqref{eq4.32} we have
\begin{equation}\label{eq4.33}
 N\left(r,\frac{1}{f'}\right)=N_0\left(r,\frac{1}{f'}\right)+S(r),
 \end{equation}
\begin{equation}\label{eq4.34}
 N\left(r,\frac{1}{g'}\right)=N_0\left(r,\frac{1}{g'}\right)+S(r),
 \end{equation}
and
\begin{equation}\label{eq4.35}
 N\left(r,\frac{1}{h'}\right)=N_0\left(r,\frac{1}{h'}\right)+S(r).
 \end{equation}
\vskip 2mm
\par Let $z_0\in\Bbb{C}$ be a common zero of $f'$ and $g'$ that is neither a zero of $f$ nor a zero of $g,$ and that $z_0$ is a simple zero of $h$ at least. By \eqref{eq4.8} we have \eqref{eq4.22} and \eqref{eq4.23}. This is discussed as follows:
\vskip 2mm
\par Suppose that $h(z_0)\neq 0.$ Then, by \eqref{eq4.22} we deduce
\begin{equation}\label{eq4.36}
\min\left\{U\left(\frac{1}{f'},z_0\right),  U\left(\frac{1}{g'},z_0\right)\right\}\leq U\left(\frac{1}{\alpha},z_0\right).
 \end{equation}
\vskip 2mm
\par Suppose that $z_0$ is a simple zero of $h.$ Then, by the supposition we deduce that the multiplicity of $z_0$ as a zero of $(h^8)' (f^8)''-(h^8)'' (f^8)'$ is equal to $7$ at least. By \eqref{eq4.22} and \eqref{eq4.23} we have
$$(f^8)'(g^8)''-(f^8)''(g^8)'=(h^8)' (f^8)''-(h^8)'' (f^8)'.$$
Therefore, the multiplicity of $z_0$ as a zero of $(f^8)'(g^8)''-(f^8)''(g^8)'$ is also equal to $7$ at least. This together with \eqref{eq4.22} and the supposition that $z_0$ is a simple zero of $h$ implies that $z_0$ is a zero of $\alpha.$ Therefore, by \eqref{eq4.22} we deduce
\begin{equation}\label{eq4.37}
\min\left\{U\left(\frac{1}{f'},z_0\right),  U\left(\frac{1}{g'},z_0\right)\right\}\leq 7U\left(\frac{1}{\alpha},z_0\right).
 \end{equation}
By \eqref{eq4.36} and \eqref{eq4.37} we have
\begin{equation}\label{eq4.38}
\min\left\{U\left(\frac{1}{f'},z_0\right),  U\left(\frac{1}{g'},z_0\right)\right\}\leq U\left(\frac{1}{\alpha},z_0\right)+U\left(\frac{1}{h^6},z_0\right)\leq 7U\left(\frac{1}{\alpha},z_0\right).
 \end{equation}
\vskip 2mm
\par By Claim I(i) we know that $\frac{f^7f'}{g^7g'}+1$ and $-h^7h'$ share $0$ CM*. Combining this with \eqref{eq4.3}-\eqref{eq4.5}, \eqref{eq4.22}, \eqref{eq4.32}-\eqref{eq4.38} and the second fundamental theorem,  we have
\begin{equation}\label{eq4.39}
\begin{split}
    &\quad 7T(r,g)+N\left(r,\frac{1}{g'}\right)+S(r)\\
    &=7\overline{N}_{1)}\left(r,\frac{1}{g}\right)+N\left(r,\frac{1}{g'}\right)
     -N_{(2}\left(r,\frac{1}{h}\right)-7N\left(r,\frac{1}{\alpha}\right)+S(r)\\
    &\leq N\left(r,\frac{f^7f'}{g^7g'}\right)+S(r)\leq T\left(r,\frac{f^7f'}{g^7g'}\right)+S(r)\\     
    &\leq \overline{N}\left(r,\frac{f^7f'}{g^7g'}\right)+\overline{N}\left(r,\frac{g^7g'}{f^7f'}\right)+\overline{N}\left(r, \frac{1}{\frac{f^7f'}{g^7g'}+1}\right)+S(r)\\
 &=\overline{N}\left(r,\frac{f^7f'}{g^7g'}\right)+\overline{N}\left(r,\frac{g^7g'}{f^7f'}\right)+\overline{N}\left(r, \frac{1}{-h^7h'}\right)+S(r)\\
&\leq \overline{N}\left(r,\frac{1}{g}\right)+\overline{N}_0\left(r,\frac{1}{g'}\right)+\overline{N}\left(r,\frac{1}{f}\right)
+\overline{N}_0\left(r,\frac{1}{f'}\right)+\overline{N}\left(r,\frac{1}{h}\right)+\overline{N}_0\left(r,\frac{1}{h'}\right)\\
&\quad+S(r)\\
&\leq T(r,g)+T(r,f)+T(r,h)+N_0\left(r,\frac{1}{g'}\right)+N_0\left(r,\frac{1}{f'}\right)+N_0\left(r,\frac{1}{h'}\right)+S(r)\\
&\leq T(r,g)+T(r,f)+T(r,h)+N_0\left(r,\frac{1}{g'}\right)+N\left(r,\frac{f}{f'}\right)+N\left(r,\frac{h}{h'}\right)+S(r)\\
&\leq 3T(r,g)+N_0\left(r,\frac{1}{g'}\right)+T\left(r,\frac{f'}{f}\right)+T\left(r,\frac{h'}{h}\right)+S(r)\\
&\leq 3T(r,g)+N_0\left(r,\frac{1}{g'}\right)+m\left(r,\frac{f'}{f}\right)+N\left(r,\frac{f'}{f}\right)+m\left(r,\frac{h'}{h}\right)+N\left(r,\frac{h'}{h}\right)\\
&\quad+S(r)\\
&= 3T(r,g)+N_0\left(r,\frac{1}{g'}\right)+N\left(r,\frac{f'}{f}\right)+N\left(r,\frac{h'}{h}\right)+S(r)\\
&=3T(r,g)+N_0\left(r,\frac{1}{g'}\right)+\overline{N}\left(r,f\right)+\overline{N}\left(r,\frac{1}{f}\right)+\overline{N}\left(r,h\right)+\overline{N}\left(r,\frac{1}{h}\right)+S(r)\\
&\leq 3T(r,g)+N_0\left(r,\frac{1}{g'}\right)+2T(r,f)+2T(r,h)+S(r)\\
 &\leq 7T(r,g)+N_0\left(r,\frac{1}{g'}\right)+S(r)\leq 7T(r,g)+N\left(r,\frac{1}{g'}\right)+S(r).
 \end{split}
  \end{equation}
By \eqref{eq4.39} we obtain that all the inequalities in \eqref{eq4.39} are equalities that are equal to $7T(r,g)+N\left(r,\frac{1}{g'}\right)+S(r).$ This gives
\begin{equation}\label{eq4.40}
N\left(r,\frac{1}{g'}\right)=N_0\left(r,\frac{1}{g'}\right)+S(r)= \overline{N}_0\left(r,\frac{1}{g'}\right)+S(r),
\end{equation}
\begin{equation}\label{eq4.41}
2T(r,f)=N_0\left(r,\frac{1}{f'}\right)+S(r)= \overline{N}_0\left(r,\frac{1}{f'}\right)+S(r)
\end{equation}
and
\begin{equation}\label{eq4.42}
2T(r,h)=N_0\left(r,\frac{1}{h'}\right)+S(r)= \overline{N}_0\left(r,\frac{1}{h'}\right)+S(r).
\end{equation}
By Claim I(ii) we know that $\frac{g^7g'}{h^7h'}+1$ and $-f^7f'$ share $0$ CM*.  Next, in the same manner as the above, we can deduce by \eqref{eq4.3}-\eqref{eq4.5}, \eqref{eq4.15}, \eqref{eq4.32}-\eqref{eq4.35} and the second fundamental theorem that
\begin{equation}\label{eq4.43}
\begin{split}
    &\quad 7T(r,h)+N\left(r,\frac{1}{h'}\right)+S(r)\\
   &=7\overline{N}_{1)}\left(r,\frac{1}{h}\right)+N\left(r,\frac{1}{h'}\right)-N_{(2}\left(r,\frac{1}{f}\right)-7N\left(r,\frac{1}{\alpha}\right)+S(r)\\
&\leq N\left(r,\frac{g^7g'}{h^7h'}\right)+S(r)\leq T\left(r,\frac{g^7g'}{h^7h'}\right)+S(r)\\
&\leq \overline{N}\left(r,\frac{g^7g'}{h^7h'}\right)+\overline{N}\left(r,\frac{h^7h'}{g^7g'}\right)+\overline{N}\left(r, \frac{1}{\frac{g^7g'}{h^7h'}+1}\right)+S(r)\\
&=\overline{N}\left(r,\frac{g^7g'}{h^7h'}\right)+\overline{N}\left(r,\frac{h^7h'}{g^7g'}\right)+\overline{N}\left(r, \frac{1}{-f^7f'}\right)+S(r)\\
&\leq \overline{N}\left(r,\frac{1}{h}\right)+\overline{N}_0\left(r,\frac{1}{h'}\right)+\overline{N}\left(r,\frac{1}{g}\right)
+\overline{N}_0\left(r,\frac{1}{g'}\right)+\overline{N}\left(r,\frac{1}{f}\right)+\overline{N}_0\left(r,\frac{1}{f'}\right)\\
&\quad+S(r)\\
&\leq T(r,h)+T(r,g)+T(r,f)+N_0\left(r,\frac{1}{h'}\right)+N_0\left(r,\frac{1}{g'}\right)+N_0\left(r,\frac{1}{f'}\right)+S(r)\\
&\leq T(r,h)+T(r,g)+T(r,f)+N_0\left(r,\frac{1}{h'}\right)+N\left(r,\frac{g}{g'}\right)+N\left(r,\frac{f}{f'}\right)+S(r)\\
&\leq 3T(r,h)+N_0\left(r,\frac{1}{h'}\right)+T\left(r,\frac{g'}{g}\right)+T\left(r,\frac{f'}{f}\right)+S(r)\\
&\leq 3T(r,h)+N_0\left(r,\frac{1}{h'}\right)+m\left(r,\frac{g'}{g}\right)+N\left(r,\frac{g'}{g}\right)+m\left(r,\frac{f'}{f}\right)+N\left(r,\frac{f'}{f}\right)\\
&\quad+S(r)\\
&= 3T(r,h)+N_0\left(r,\frac{1}{h'}\right)+N\left(r,\frac{g'}{g}\right)+N\left(r,\frac{f'}{f}\right)+S(r)\\
\end{split}
\end{equation}
\begin{equation}\nonumber
\begin{split}
&=3T(r,h)+N_0\left(r,\frac{1}{h'}\right)+\overline{N}\left(r,g\right)+\overline{N}\left(r,\frac{1}{g}\right)+\overline{N}\left(r,f\right)+\overline{N}\left(r,\frac{1}{f}\right)+S(r)\\
&\leq 3T(r,h)+N_0\left(r,\frac{1}{h'}\right)+2T(r,g)+2T(r,f)+S(r)\\
&\leq 7T(r,h)+N_0\left(r,\frac{1}{h'}\right)+S(r)\leq 7T(r,h)+N\left(r,\frac{1}{h'}\right)+S(r).
 \end{split}
  \end{equation}
By \eqref{eq4.43} we obtain that all the inequalities in \eqref{eq4.43} are equalities that are equal to $7T(r,h)+N\left(r,\frac{1}{h'}\right)+S(r).$ This gives
\begin{equation}\label{eq4.44}
N\left(r,\frac{1}{h'}\right)=N_0\left(r,\frac{1}{h'}\right)+S(r)= \overline{N}_0\left(r,\frac{1}{h'}\right)+S(r)
\end{equation}
and
\begin{equation}\label{eq4.45}
2T(r,g)=N_0\left(r,\frac{1}{g'}\right)+S(r)= \overline{N}_0\left(r,\frac{1}{g'}\right)+S(r).
\end{equation}
Obviously
\begin{equation}\label{eq4.46}
\begin{split}
N_0\left(r,\frac{1}{f'}\right)&\leq N\left(r,\frac{1}{f'}\right)\leq T(r,f')+O(1)\leq 2T(r,f)+S(r),
 \end{split}
\end{equation}
\begin{equation}\label{eq4.47}
N_0\left(r,\frac{1}{g'}\right)\leq N\left(r,\frac{1}{g'}\right)\leq 2T(r,g)+S(r),
\end{equation}
and
\begin{equation}\label{eq4.48}
N_0\left(r,\frac{1}{h'}\right)\leq N\left(r,\frac{1}{h'}\right)\leq 2T(r,h)+S(r).
\end{equation}
By \eqref{eq4.40}-\eqref{eq4.42} and \eqref{eq4.44}-\eqref{eq4.48} we deduce Claim II(v)-Claim II(vii). This together with \eqref{eq4.3}, \eqref{eq4.4},  \eqref{eq4.39} and \eqref{eq4.43} gives Claim II(i)-Claim II(iv). This proves  Claim II.
\vskip 2mm
\par Noting that $f,$ $g$ and $h$ are transcendental meromorphic functions in the complex plane, we use Lemma \ref{lemma2.17} to deduce that $f$ has infinitely many repelling periodic points of minimal period $n_0$ for a given positive integer $n_0$ such that $n_0\geq 2.$ Next we let $z_{n_0}\in \Bbb{C}\setminus\{0\}$ be a  repelling periodic point of minimal period $n_0.$ Then, it follows from Lemma \ref{lemma2.18} that  $z_{n_0}\in J(f)$ with
 \begin{equation}\label{eq4.49}
f^{\circ mn_0}(z_{n_0})=f_1^{\circ m}(z_{n_0}) =z_{n_0}
\end{equation}
and
 \begin{equation}\label{eq4.50}
 \begin{split}
\left| (f^{\circ mn_0})' (z_{n_0})\right|&=\left| \prod\limits_{j=0}^{mn_0-1}f'\left(f^{\circ j}\right) (z_{n_0})\right|=\left| (f_1^{\circ m})' (z_{n_0})\right| =\left| \prod\limits_{l=0 }^{m-1}f'_1\left(f_1^{\circ l}\right) (z_{n_0})\right|\\
&=\left| \prod\limits_{l=0 }^{m-1}f'_1\left(f_1^{\circ l}\right) (z_{n_0})\right|=\left|f'_1\left(z_{n_0}\right)\right|^{m} >1
  \end{split}
 \end{equation}
for each positive integer $m\in\Bbb{Z}^{+}.$ Here and in what follows,
\begin{equation}\label{eq4.51}
   f_m(z)=f^{\circ mn_0}(z) \, \,  \text{for} \, \, z\in D_{\delta_m}(z_{n_0})  \, \, \text{and} \, \, m\in\Bbb{Z}^{+},
\end{equation}
and  $D_{\delta_m}(z_{n_0})$ is defined as
 \begin{equation}\label{eq4.52}
 D_{\delta_m}(z_{n_0})=\left\{z\in\Bbb{C}:|z-z_{n_0}|<\delta_m\right\}
 \end{equation}
 for each positive integer $m\in\Bbb{Z}^{+}.$ Here $\delta_m$ is some small positive number satisfying $\delta_{m+1}\leq \delta_m$ for each positive integer $m\in\Bbb{Z}^{+},$ such that $f_m(z)$ is analytic in $D_{\delta_m}(z_{n_0})$ for each positive integer $m\in\Bbb{Z}^{+},$  and such that
  \begin{equation}\label{eq4.53}
 0< \frac{|z_{n_0}|}{2} \leq \left| f_m(z)\right|\leq \frac{3|z_{n_0}|}{2}, \, \, \left| f'_m(z)\right|=  \left|(f^{\circ mn_0})'(z)\right|>1
 \end{equation}
and
 \begin{equation}\label{eq4.54}
 \left|f'(f^{\circ mn_0})(z)\right|>0
\end{equation}
for each  $z \in  D_{\delta_m}(z_{n_0})$ and each positive integer $m\in\Bbb{Z}^{+}.$ This is derived from \eqref{eq4.49}-\eqref{eq4.51} and the continuity of $f(z),$ $f'(z),$ $f^{\circ mn_0}(z)$ and $(f^{\circ mn_0})' (z)$ at $z_{n_0}.$ From \eqref{eq4.52} we have
 \begin{equation}\label{eq4.55}
  D_{\delta_m}(z_{n_0})=\left\{z\in\Bbb{C}:\left|\frac{z}{\delta_m}-\frac{z_{n_0}}{\delta_m}\right|<1\right\}
  \end{equation}
for each positive integer $m\in\Bbb{Z}^{+}.$ Next we let
\begin{equation}\label{eq4.56}
\tilde{z}=\frac{z}{\delta_m}-\frac{z_{n_0}}{\delta_m}
 \end{equation}
for each positive integer $m\in\Bbb{Z}^{+}.$ Then, it follows from \eqref{eq4.55} and \eqref{eq4.56} that
\begin{equation}\label{eq4.57}
z=\delta_m\tilde{z}+z_{n_0} \, \, \text{with} \, \, |\tilde{z}|<1
 \end{equation}
for each positive integer $m\in\Bbb{Z}^{+}.$ From \eqref{eq4.49}-\eqref{eq4.53} and \eqref{eq4.57} we have
\begin{equation}\label{eq4.58}
\tilde{f}_m(\tilde{z})=: f_m(\delta_m\tilde{z}+z_{n_0})=  f^{\circ mn_0}\left(\delta_m\tilde{z}+z_{n_0}\right) \, \,  \text{with} \, \, |\tilde{z}|<1,
\end{equation}
 for each positive integer $m\in\Bbb{Z}^{+},$ and such that for each positive integer $m\in\Bbb{Z}^{+},$ we have
 \begin{equation}\label{eq4.59}
  0<\frac{|z_{n_0}|}{2} \leq \left|\tilde{f}_m(\tilde{z})\right|\leq \frac{3|z_{n_0}|}{2} \, \, \text{and} \, \,
   \left| \tilde{f}'_m(\tilde{z})\right|=  \delta_m\left|(f^{\circ mn_0})'(z)\right|>\delta_m>0
 \end{equation}
with $\delta_m\tilde{z}+z_{n_0}\in  D_{\delta_m}(z_{n_0})$ and  $|\tilde{z}|<1.$ Noting that $z_{n_0}\in J(f),$ we deduce by \eqref{eq4.51}, \eqref{eq4.52} and the definition of the Julia set $J=J(f)$ of the transcendental meromorphic function $f$ that the family $\{ f_m(z)\}$ with $z\in D_{\delta_m}(z_{n_0})$ for each positive integer $m\in\Bbb{Z}^{+}$ is not normal at the point $z_{n_0}.$ Accordingly, it follows from \eqref{eq4.57} and \eqref{eq4.58} that the family $\left\{\tilde{f}_m(\tilde{z})\right\}$ defined in $|\tilde{z}|<1$ is not normal at the origin point $\tilde{z}=0.$ Combining this with Lemma \ref{lemma2.16}, we see that there exist (i) points $\tilde{z}_m$ in $|\tilde{z}|<1$ such that $\tilde{z}_m\rightarrow 0,$ (ii) positive numbers $\tilde{\rho}_m$ such that $\tilde{\rho}_m\rightarrow 0^{+}$
and (iii) an infinite subsequence of $\left\{\tilde{f}_m(\tilde{z})\right\},$ say $\left\{\tilde{f}_m(\tilde{z})\right\}$ itself
such that
 \begin{equation}\label{eq4.60}
 \tilde{f}_m(\tilde{z}_m+\tilde{\rho}_m\zeta)=f_m(\delta_m(\tilde{z}_m+\tilde{\rho}_m\zeta)+z_{n_0})=  f^{\circ mn_0}\left(\delta_m(\tilde{z}_m+\tilde{\rho}_m\zeta)+z_{n_0}\right) \rightarrow
\tilde{g}(\zeta)
\end{equation}
spherically uniformly on compact subset of $\Bbb{C}.$ Here and in what follows,
\begin{equation}\label{eq4.61}
\delta_m(\tilde{z}_m+\tilde{\rho}_m\zeta)+z_{n_0}\in  D_{\delta_m}(z_{n_0}) \, \, \text{and} \, \, |\tilde{z}_m+\tilde{\rho}_m\zeta|<1
\end{equation}
for each $\zeta\in\Bbb{C}$ and the large positive integer $m\in \Bbb{Z}^{+},$ and $\tilde{g}$ is a non-constant meromorphic function. The function $\tilde{g}$
may be taken to satisfy the normalization $\tilde{g}^{\#}(\zeta)\leq\tilde{g}^{\#}(0)=1.$ Moreover, from \eqref{eq4.54} and \eqref{eq4.61} we have
 \begin{equation}\label{eq4.62}
\left|f'\left(f^{\circ mn_0}\left(\delta_m(\tilde{z}_m+\tilde{\rho}_m\zeta)+z_{n_0}\right)\right)\right|=\left|f'\left(f^{\circ mn_0}\left(\delta_m(\tilde{z}_m+\tilde{\rho}_m\zeta)+z_{n_0}\right)\right)\right|>0
 \end{equation}
for each $\zeta\in\Bbb{C}$ and the large positive integer $m$ such that $z=\delta_m\tilde{z}+z_{n_0} \in  D_{\delta_m}(z_{n_0})$ with $\tilde{z}=\tilde{z}_m+\tilde{\rho}_m\zeta$ and $|\tilde{z}|<1.$  Next we prove the following claim:
 \vskip 2mm
\par{\bf Claim III}. Under the assumptions of Theorem \ref{Theorem1.10}, the function $\tilde{g}(\zeta)$ defined in \eqref{eq4.60} is a transcendental entire function such that $(f(\tilde{g}))'(\zeta)\neq 0$ for each $\zeta\in \Bbb{C}.$ Here and in what follows, $(f(\tilde{g}))'$ denotes the first derivative of the composition $f(\tilde{g})$ of $f$ and $\tilde{g}.$
\vskip 2mm
\par We prove Claim III: \, first of all, we suppose that there exists a point $\zeta_0\in\Bbb{C}$ such that $\tilde{g}(\zeta_0)=\infty.$ Then, there exists some positive number $\delta_0$ such that $\tilde{g}(\zeta)$ is analytic in $\mathring{\overline{D}}_{\delta_0}(\zeta_0)=:\left\{\zeta\in\Bbb{C}:0<\left|\zeta-\zeta_0\right|\leq \delta_0\right\}$ and such that
 $\left|\tilde{g}(\zeta)\right|>0$ for each $\zeta\in\mathring{\overline{D}}_{\delta_0}(\zeta_0).$ Combining this with the known result
 $ \frac{1}{\tilde{f}_m(\tilde{z}_m+\tilde{\rho}_m\zeta)}\rightarrow\frac{1}{\tilde{g}(\zeta)}$ spherically uniformly on the closed disk  $\overline{D}_{\delta_0}(\zeta_0)=:\left\{\zeta\in\Bbb{C}:\left|\zeta-\zeta_0\right|\leq\delta_0\right\},$ where $\frac{1}{\tilde{g}(\zeta)}$
 and $\frac{1}{\tilde{f}_m(\tilde{z}_m+\tilde{\rho}_m\zeta)}$ with $m\in\Bbb{Z}^{+}$ are analytic functions on $\overline{D}_{\delta_0}(\zeta_0),$  and $\zeta_0$ is the only one zero of $\frac{1}{\tilde{g}(\zeta)}$ in $D_{\delta_0}(\zeta_0)=:\left\{\zeta\in\Bbb{C}:\left|\zeta-\zeta_0\right|<\delta_0\right\},$ by Hurwitz's theorem (cf.\cite[p.9 Hurwitz Theorem]{Schiff1993}) we deduce that $\frac{1}{\tilde{f}_m(\tilde{z}_m+\tilde{\rho}_m\zeta)}$ has one zero in $D_{\delta_0}(\zeta_0)$ at least for the large positive integer $m.$ However, from  \eqref{eq4.60},  \eqref{eq4.61}, the left inequality of \eqref{eq4.59} and the supposition $z_{n_0}\neq 0,$ we deduce that
$\frac{1}{\tilde{f}_m(\tilde{z}_m+\tilde{\rho}_m\zeta)}$ has no zeros in $D_{\delta_0}(\zeta_0).$ This is a contradiction. Therefore, we prove that $\tilde{g}$
is a non-constant entire function.
\vskip 2mm
\par Secondly, we suppose that there exists a point $\tilde{\zeta}_0\in\Bbb{C}$ such that $\tilde{g}(\tilde{\zeta}_0)=0.$ Then, there exists some positive number $\tilde{\delta}_0$ such that $\tilde{g}(\zeta)$ is analytic in $\overline{D}_{\tilde{\delta}_0}(\zeta_0)=:\left\{\zeta\in\Bbb{C}:\left|\zeta-\tilde{\zeta}_0\right|\leq \tilde{\delta}_0\right\},$ and such that
 $\left|\tilde{g}(\zeta)\right|>0$ for each
 \begin{equation}\nonumber
 \zeta\in\mathring{\overline{D}}_{\tilde{\delta}_0}(\tilde{\zeta}_0)=:\left\{\zeta\in\Bbb{C}:0<\left|\zeta-\tilde{\zeta}_0\right|\leq \tilde{\delta}_0\right\}.
  \end{equation}
 Combining this with the known result that $\tilde{f}_m(\tilde{z}_m+\tilde{\rho}_m\zeta)\rightarrow\tilde{g}(\zeta)$ spherically uniformly on the closed disk  $\overline{D}_{\tilde{\delta}_0}(\tilde{\zeta}_0)=:\left\{\zeta\in\Bbb{C}:\left|\zeta-\tilde{\zeta}_0\right|\leq\tilde{\delta}_0\right\},$ where $\tilde{g}(\zeta)$  and $\tilde{f}_m(\tilde{z}_m+\tilde{\rho}_m\zeta)$ with $m\in\Bbb{Z}^{+}$ are analytic functions on $\overline{D}_{\tilde{\delta}_0}(\tilde{\zeta}_0),$  and $\tilde{\zeta}_0$ is the only one zero of $\tilde{g}(\zeta)$ in $D_{\tilde{\delta}_0}(\tilde{\zeta}_0),$ we deduce by Hurwitz's theorem (cf.\cite[p.9 Hurwitz Theorem]{Schiff1993}) that $\tilde{f}_m(\tilde{z}_m+\tilde{\rho}_m\zeta)$ has one zero in $D_{\tilde{\delta}_0}(\tilde{\zeta}_0)$ at least for the large positive integer $m.$ However, from  \eqref{eq4.60}, \eqref{eq4.61}, the left inequality of \eqref{eq4.59} and the supposition $z_{n_0}\neq 0,$ we deduce that
$\tilde{f}_m(\tilde{z}_m+\tilde{\rho}_m\zeta)$ has no zeros in $D_{\tilde{\delta}_0}(\tilde{\zeta}_0)$ for the large positive integer m. This is a contradiction. This implies that $0$ is a Picard exceptional value of $\tilde{g}$ in the complex plane. Combining this with the fundamental theorem of algebra and the known result that  $\tilde{g}(\zeta)$ is a non-constant entire function, we deduce that $\tilde{g}$ is a transcendental entire function such that $0$ is a Picard exceptional value of  $\tilde{g}.$
\vskip 2mm
\par Thirdly, we prove that $0$ is a Picard exceptional value of $(f(\tilde{g}))'.$ Indeed, from the known result that $\tilde{g}$ is a transcendental entire function and the supposition that $f$ is a transcendental meromorphic function in the complex plane, we deduce from Lemma \ref{lemma2.19} that the composition $f(\tilde{g})$ of $f$ and $\tilde{g}$ is also a transcendental meromorphic function in the complex plane. Then,
the first derivative $(f(\tilde{g}))'$ of the composition $f(\tilde{g})$ is still a transcendental meromorphic function in the complex plane. Next we suppose that there exists a point $\hat{\zeta}_0\in \Bbb{C}$ such that $(f(\tilde{g}))'(\hat{\zeta}_0)=0.$ Then, we can find some positive number $\hat{\delta}_0$ such that $(f(\tilde{g}))'$ is analytic in $\overline{D}_{\hat{\delta}_0}(\hat{\zeta}_0)=:\left\{\zeta\in\Bbb{C}:\left|\zeta-\hat{\zeta}_0\right|\leq \hat{\delta}_0\right\},$ and such that
 $\left|(f(\tilde{g}))'(\zeta)\right|>0$ for each $\zeta\in\mathring{\overline{D}}_{\hat{\delta}_0}(\hat{\zeta}_0)=:\left\{\zeta\in\Bbb{C}:0<\left|\zeta-\hat{\zeta}_0\right|\leq \hat{\delta}_0\right\}.$ On the other hand, from \eqref{eq4.60} and \eqref{eq4.61}, we deduce that
 \begin{equation}\label{eq4.63}
 \begin{split}
&\quad  f\left(\tilde{f}_m(\tilde{z}_m+\tilde{\rho}_m\zeta)\right)=  f\left(f_m(\delta_m(\tilde{z}_m+\tilde{\rho}_m\zeta)+z_{n_0})\right)\\
&=f\left(f^{\circ mn_0}\left(\delta_m(\tilde{z}_m+\tilde{\rho}_m\zeta)+z_{n_0}\right)\right)\rightarrow f(\tilde{g})(\zeta)
    \end{split}
   \end{equation}
 spherically uniformly on the closed disk $\overline{D}_{\hat{\delta}_0}(\hat{\zeta}_0).$
 Noting that $(f(\tilde{g}))'$ is analytic on the closed disk $\overline{D}_{\hat{\delta}_0}(\hat{\zeta}_0),$ we see that the
  the composition $f(\tilde{g})$ is also analytic on the closed disk $\overline{D}_{\hat{\delta}_0}(\hat{\zeta}_0).$ Combining this with the supposition that the composition $f\left(\tilde{f}_m(\tilde{z}_m+\tilde{\rho}_m\zeta)\right)$ is analytic on the closed disk  $\overline{D}_{\hat{\delta}_0}(\hat{\zeta}_0)$
 for the large positive integer $m,$ we deduce by \eqref{eq4.63} and Weierstrass theorem (cf.\cite [p.9, Weierstrass Theorem]{Schiff1993}) that
\begin{equation}\label{eq4.64}
 \begin{split}
&\quad  \frac{df\left(\tilde{f}_m(\tilde{z}_m+\tilde{\rho}_m\zeta)\right)}{d\zeta}=  \frac{df\left(f_m(\delta_m(\tilde{z}_m+\tilde{\rho}_m\zeta)+z_{n_0})\right)}{d\zeta}\\
&=\frac{df\left(f^{\circ mn_0}\left(\delta_m(\tilde{z}_m+\tilde{\rho}_m\zeta)+z_{n_0}\right)\right)}{d\zeta}\rightarrow (f(\tilde{g}))'(\zeta)
    \end{split}
   \end{equation}
 spherically uniformly on the closed disk  $\overline{D}_{\hat{\delta}_0}(\hat{\zeta}_0).$ On the other hand, since
$(f(\tilde{g}))'$ is analytic on the closed disk $\overline{D}_{\hat{\delta}_0}(\hat{\zeta}_0)$ such that $(f(\tilde{g}))'(\zeta)\not\equiv 0$ for each $\zeta\in \overline{D}_{\hat{\delta}_0}(\hat{\zeta}_0),$  and $\hat{\zeta}_0$ is the only one zero of $(f(\tilde{g}))'$ on the closed disk  $\overline{D}_{\hat{\delta}_0}(\hat{\zeta}_0),$ we deduce by \eqref{eq4.64}, Hurwitz's theorem (cf.\cite[p.9 Hurwitz Theorem]{Schiff1993}) and the supposition  that the composition $f\left(\tilde{f}_m(\tilde{z}_m+\tilde{\rho}_m\zeta)\right)$ is analytic on the closed disk  $\overline{D}_{\hat{\delta}_0}(\hat{\zeta}_0)$
 for each positive integer $m,$ we deduce that
 \begin{equation}\label{eq4.65}
 \frac{d\left(f\left(f^{\circ mn_0}\left(\delta_m(\tilde{z}_m+\tilde{\rho}_m\zeta)+z_{n_0}\right)\right)\right)}{d\zeta}
  \end{equation}
  has one zero at least in the open disk $D_{\hat{\delta}_0}(\hat{\zeta}_0)=:\left\{\zeta\in\Bbb{C}:\left|\zeta-\hat{\zeta}_0\right|<\hat{\delta}_0\right\}.$
 However, from \eqref{eq4.54}-\eqref{eq4.57}, \eqref{eq4.60}-\eqref{eq4.62} and the right formula of \eqref{eq4.59} we deduce
\begin{equation}\label{eq4.66}
\begin{split}
&\quad \left| \frac{d\left(f\left(f^{\circ mn_0}\left(\delta_m(\tilde{z}_m+\tilde{\rho}_m\zeta)+z_{n_0}\right)\right)\right)}{d\zeta}\right|\\
& =\left|\frac{df^{\circ mn_0}\left(\delta_m(\tilde{z}_m+\tilde{\rho}_m\zeta)+z_{n_0}\right)}{d\zeta}\cdot f'\left(f^{\circ mn_0}\left(\delta_m(\tilde{z}_m+\tilde{\rho}_m\zeta)+z_{n_0}\right)\right)\right|\\
&=\delta_m\tilde{\rho}_m\left|(f^{\circ mn_0})'\left(\delta_m(\tilde{z}_m+\tilde{\rho}_m\zeta)+z_{n_0}\right)\right|\cdot\left|f'\left(f^{\circ mn_0}\left(\delta_m(\tilde{z}_m+\tilde{\rho}_m\zeta)+z_{n_0}\right)\right)\right|\\
&>\delta_m\tilde{\rho}_m\left|f'\left(f^{\circ mn_0}\left(\delta_m(\tilde{z}_m+\tilde{\rho}_m\zeta)+z_{n_0}\right)\right)\right|>0
\end{split}
   \end{equation}
for any given  $\zeta\in\Bbb{C}$ and the corresponding large positive integer $m$  such that \eqref{eq4.61} holds. From \eqref{eq4.66} we see that  \eqref{eq4.65} has no zeros on the closed disk $\overline{D}_{\hat{\delta}_0}(\hat{\zeta}_0)$ for the corresponding large positive integer $m.$ This is a contradiction. Therefore, we prove that $0$ is a Picard exceptional value of $(f(\tilde{g}))'.$ This complete the proof of Claim III.
\vskip 2mm
\par Last, we complete the proof of Theorem \ref{Theorem1.10}. Indeed, from Claim III we know that $\tilde{g}$ in \eqref{eq4.60} is a transcendental entire function. Combining this with the supposition that the transcendental meromorphic functions $f,$ $g,$ $h$ satisfy the functional equation $f^8+g^8+h^8=1,$ we deduce that
$f(\tilde{g}),$ $g(\tilde{g})$ and $h(\tilde{g})$ are still transcendental meromorphic functions that satisfy
\begin{equation}\label{eq4.67}
(f(\tilde{g})(\zeta))^8+(g(\tilde{g})(\zeta))^8+(h(\tilde{g})(\zeta))^8=1
\end{equation}
for each $\zeta\in\Bbb{C}.$ From \eqref{eq4.67} and Claim II(V)-Claim II(vii) we have
\begin{equation}\label{eq4.68}
2T\left(r,f(\tilde{g})\right)=N\left(r,\frac{1}{\left(f(\tilde{g})\right)'}\right)+S_1(r),
\end{equation}
where and in what follows,  $S_1(r)$ is any quantity such that $S_1(r)=o (T_1(r)),$ as $r\not\in E_1$ and $r\rightarrow\infty.$ Here $E_1\subset (0,+\infty)$ is a set of finite linear measure, and $T_1(r)=T(r,f(\tilde{g}))+T(r,g(\tilde{g}))+T(r,h(\tilde{g})).$ From \eqref{eq4.67} and Lemma \ref{lemma2.11} we have
\begin{equation}\label{eq4.69}
 \begin{split}
 \frac{1}{3}T_1(r)&=T(r,f(\tilde{g})))+S(r)=T(r,g(\tilde{g})))+S_1(r)=T(r,h(\tilde{g})))+S_1(r)\\
&=\overline{N}\left(r,\frac{1}{f(\tilde{g}))}\right)+S(r)=\overline{N}\left(r,\frac{1}{g(\tilde{g}))}\right)+S(r)=\overline{N}\left(r,\frac{1}{h(\tilde{g}))}\right)
+S_1(r)\\
&=\overline{N}(r,f(\tilde{g})))+S_1(r)=\overline{N}(r,g(\tilde{g})))+S_1(r)=\overline{N}(r,h(\tilde{g})))+S_1(r)\\
 &=\overline{N}_0(r,f(\tilde{g})), g(\tilde{g})),h(\tilde{g})))+S_1(r).
 \end{split}
 \end{equation}
From Claim III we have
\begin{equation}\label{eq4.70}
N\left(r,\frac{1}{\left(f(\tilde{g})\right)'}\right)=0.
\end{equation}
We substituting \eqref{eq4.70} into \eqref{eq4.68}, and then we have $T(r,f(\tilde{g}))=S_1(r),$
which contradicts \eqref{eq4.69}. Theorem \ref{Theorem1.10} is thus completely proved.
\vskip 2mm
\par
\section{\bf Proof of Theorem \ref{Theorem1.11}}
 Since the proof is similar to the proof of Theorem \ref{Theorem1.10}, we omit some details below.
\vskip 2mm
\par{\bf Case 1.} Suppose that $f^6,$ $g^6$ and $h^6$ are linearly dependent. Then, in the same manner as in Case 1 in the proof of Theorem \ref{Theorem1.10} we can get a contradiction.
\vskip 2mm
\par{\bf Case 2.} Suppose that $f^6,$ $g^6$ and $h^6$ are linearly independent. First of all, by Lemma \ref{lemma2.12} we can deduce that $f,$ $g$ and $h$ share $0$ CM* such that \eqref{eq4.4} holds. Also by Lemma \ref{lemma2.12} we have \eqref{eq4.5}. By \eqref{eq4.5} we can see that
 any two of the functions $f,$ $g$ and $h$ almost have no common zeros. Moreover, by \eqref{eq4.4} we deduce
\begin{equation}\label{eq5.1}
 \begin{split}
 \frac{1}{3}T(r)&=T(r,f)+S(r)=T(r,g)+S(r)=T(r,h)+S(r)\\
&=\overline{N}_{1)}\left(r,\frac{1}{f}\right)+S(r)=\overline{N}_{1)}\left(r,\frac{1}{g}\right)+S(r)=\overline{N}_{1)}\left(r,\frac{1}{h}\right)+S(r)
 \end{split}
 \end{equation}
 and
\begin{equation}\label{eq5.2}
 \overline{N}_{(1,1)}\left(r,\frac{1}{f}, \frac{1}{g}\right)+\overline{N}_{(1,1)}\left(r,\frac{1}{g}, \frac{1}{h}\right)
  +\overline{N}_{(1,1)}\left(r,\frac{1}{h}, \frac{1}{f}\right)=S(r).
\end{equation}
\vskip 2mm
\par On the other hand, by \eqref{eq2.1}, \eqref{eq2.7} and Theorem \ref{TheoremB} we have for $n=m=k=6$ that
\begin{equation} \label {eq5.3}
 W(f^6, g^6, h^6)=\left|\aligned (f^6)' &\quad  (g^6)'\\
                                   (f^6)'' &\quad  (g^6)''
                                   \endaligned
                                  \right|= \left|\aligned (g^6)' &\quad  (h^6)'\\
                                  (g^6)'' &\quad  (h^6)''\endaligned\right| = \left|\aligned (h^6)' &\quad  (f^6)'\\
                                  (h^6)'' &\quad  (f^6)''\endaligned\right|= \beta f^4g^4h^4,
\end{equation}
where $\beta$ is a small function of $f,$ $g$ and $h$ such that $\beta\not\equiv 0, \infty.$
\vskip 2mm
\par On the other hand, by taking the first derivatives of two sides of
\eqref{eq1.1} for $n=6$ we have
\begin{equation}\label {eq5.4}
f^5f'+g^5g'+h^5h'=0,
\end{equation}
and so we have
\begin{equation}\label{eq5.5}
g^5g'\left(\frac{f^5f}{g^5g'}+1\right)=-h^5h'
\end{equation}
and
\begin{equation}\label {eq5.6}
\frac{f^5f}{g^5g'}+\frac{h^5h'}{g^5g'}+1=0.
\end{equation}
In the same manner as in the proof of Claim I and by \eqref{eq4.4}, \eqref{eq4.5} and \eqref{eq5.1}-\eqref{eq5.6} we get the following claim:
\vskip 2mm
\par {\bf  Claim IV.} The following items hold: (i) $\frac{f^5f'}{g^5g'}+1$ and $-h^5h'$ share $0$ CM*; (ii) $\frac{g^5g'}{h^5h'}+1$ and $-f^5f'$ share $0$ CM*; (iii) $\frac{h^5h'}{f^5f'}+1$ and $-g^5g'$ share $0$ CM*.
\vskip 2mm
\par Next we prove the following claim:
\vskip 2mm
\par {\bf Claim V.} \, The following equalities hold:
\begin{equation}\nonumber
\begin{split}
\rm{(i)}\quad &T\left(r,\frac{f^5f'}{g^5g'}\right)=\overline{N}\left(r,\frac{f^5f'}{g^5g'}\right)+\overline{N}\left(r,\frac{g^5g'}{f^5f'}\right)
+\overline{N}\left(r, \frac{1}{\frac{f^5f'}{g^5g'}+1}\right)+S(r),\\
\rm{(ii)}\quad &T\left(r,\frac{g^5g'}{h^5h'}\right)=\overline{N}\left(r,\frac{g^5g'}{h^5h'}\right)+\overline{N}\left(r,\frac{h^5h'}{g^5g'}\right)
+\overline{N}\left(r,\frac{1}{\frac{g^5g'}{h^5h'}+1}\right)+S(r),\\
\rm{(iii)} \quad &T\left(r,\frac{h^5h'}{f^5f'}\right)=\overline{N}\left(r,\frac{h^5h'}{f^5f'}\right)+\overline{N}\left(r,\frac{f^5f'}{h^5h'}\right)
+\overline{N}\left(r,\frac{1}{\frac{h^5h'}{f^5f'}+1}\right)+S(r),\\
\rm{(iv)} \quad  &2T(r) =T\left(r,\frac{f^5f'}{g^5g'}\right)+S(r)=T\left(r,\frac{g^5g'}{h^5h'}\right)+S(r)=T\left(r,\frac{h^5h'}{f^5f'}\right)+S(r),\\
\rm{(v)}\quad &T(r,f)=N\left(r,\frac{1}{f'}\right)+S(r)=N_0\left(r,\frac{1}{f'}\right)+S(r)= \overline{N}_0\left(r,\frac{1}{f'}\right)+S(r),\\
\rm{(vi)}\quad  &T(r,g)=N \left(r,\frac{1}{g'}\right)+S(r)=N_0\left(r,\frac{1}{g'}\right)+S(r)= \overline{N}_0\left(r,\frac{1}{g'}\right)+S(r),\\
\rm{(vii)}\quad &T(r,h)=N\left(r,\frac{1}{h'}\right)=N_0\left(r,\frac{1}{h'}\right)+S(r)= \overline{N}_0\left(r,\frac{1}{h'}\right)+S(r).
 \end{split}
  \end{equation}
 \vskip 2mm
\par Indeed, by Claim IV(i) we know that $\frac{f^5f}{g^5g'}+1$ and $-h^5h'$ share $0$ CM*, and by \eqref{eq5.1} we know that almost all the zeros of $f,$ $g$ and $h$ in the complex plane are simple zeros such that
 \begin{equation}\label{eq5.7}
 N_{(2}\left(r,\frac{1}{f}\right)+ N_{(2}\left(r,\frac{1}{g}\right)+ N_{(2}\left(r,\frac{1}{h}\right)=S(r).
 \end{equation}
By \eqref{eq5.7} we have
\begin{equation}\label{eq5.8}
 N\left(r,\frac{1}{f'}\right)=N_0\left(r,\frac{1}{f'}\right)+S(r),
 \end{equation}
\begin{equation}\label{eq5.9}
 N\left(r,\frac{1}{g'}\right)=N_0\left(r,\frac{1}{g'}\right)+S(r),
 \end{equation}
and
\begin{equation}\label{eq5.10}
 N\left(r,\frac{1}{h'}\right)=N_0\left(r,\frac{1}{h'}\right)+S(r).
 \end{equation}
 On the other hand, by \eqref{eq5.3} we have
\begin{equation}\label {eq5.11}
\left|\aligned (f^6)' &\quad  (g^6)'\\
 (f^6)'' &\quad  (g^6)''
 \endaligned\right|=(f^6)'(g^6)''-(g^6)'(f^6)''= \beta f^4g^4h^4
 \end{equation}
\begin{equation}\label {eq5.12}
\left|\aligned (g^6)' &\quad  (h^6)'\\
 (g^6)'' &\quad  (h^6)''
 \endaligned\right|=(g^6)'(h^6)''-(h^6)'(g^6)''= \beta f^4g^4h^4
 \end{equation}
and
\begin{equation}\label {eq5.13}
 \left|\aligned (h^6)' &\quad  (f^6)'\\
 (h^6)'' &\quad  (f^6)''\endaligned\right|=(h^6)'(f^6)''-(f^6)'(h^6)''= \beta f^4g^4h^4.
  \end{equation}
\vskip 2mm
\par
Now we let $z_0\in\Bbb{C}$ be a common zero of $f'$ and $g'$ that is neither a zero of $f$ nor a zero of $g,$ and that $z_0$ is a simple zero of $h$ at most. This is discussed as follows:
\vskip 2mm
\par Suppose that $h(z_0)\neq 0.$ Then, by \eqref{eq5.11} and the supposition we deduce that $z_0$ is a zero of $\beta$ such that
\begin{equation}\label{eq5.14}
\min\left\{U\left(\frac{1}{f'},z_0\right),  U\left(\frac{1}{g'},z_0\right)\right\}\leq U\left(\frac{1}{\beta},z_0\right).
 \end{equation}
\vskip 2mm
\par Suppose that $z_0$ is a simple zero of $h.$ Then, by the supposition we deduce that the multiplicity of $z_0$ as a zero of $(h^6)' (f^6)''-(h^6)'' (f^6)'$ is equal to $5$ at least. By \eqref{eq5.11} and \eqref{eq5.13} we have
$$(f^6)'(g^6)''-(g^6)'(f^6)''=(h^6)' (f^6)''- (f^6)'(h^6)''.$$
Therefore, the multiplicity of $z_0$ as a zero of $(f^6)'(g^6)''-(g^6)'(f^6)''$ is also equal to $5$ at least. This together with \eqref{eq5.11} and the supposition that $z_0$ is a simple zero of $h$ implies that $z_0$ is a zero of $\beta.$ Therefore, by \eqref{eq5.11} we deduce
\begin{equation}\label{eq5.15}
\min\left\{U\left(\frac{1}{f'},z_0\right),  U\left(\frac{1}{g'},z_0\right)\right\}\leq 5U\left(\frac{1}{\beta},z_0\right).
 \end{equation}
By \eqref{eq5.14} and \eqref{eq5.15} we have
\begin{equation}\label{eq5.16}
\min\left\{U\left(\frac{1}{f'},z_0\right),  U\left(\frac{1}{g'},z_0\right)\right\}\leq U\left(\frac{1}{\beta},z_0\right)+U\left(\frac{1}{h^4},z_0\right)\leq 5U\left(\frac{1}{\beta},z_0\right).
 \end{equation}
\vskip 2mm
\par By Claim IV(i) we know that $\frac{f^5f'}{g^5g'}+1$ and $-h^5h'$ share $0$ CM*. Combining this with \eqref{eq4.4}, \eqref{eq4.5},  \eqref{eq5.3}, \eqref{eq5.7}-\eqref{eq5.10}, \eqref{eq5.16} and the second fundamental theorem, we have
\begin{equation}\label{eq5.17}
\begin{split}
&\quad 5T(r,g)+N\left(r,\frac{1}{g'}\right)+S(r)=5\overline{N}_{1)}\left(r,\frac{1}{g}\right)+N\left(r,\frac{1}{g'}\right)-N_{(2}\left(r,\frac{1}{h}\right)\\
&\quad -5N\left(r,\frac{1}{\beta}\right)+S(r)\leq N\left(r,\frac{f^5f'}{g^5g'}\right)+S(r)\leq T\left(r,\frac{f^5f'}{g^5g'}\right)+S(r)\\
&\leq \overline{N}\left(r,\frac{f^5f'}{g^5g'}\right)+\overline{N}\left(r,\frac{g^5g'}{f^5f'}\right)+\overline{N}\left(r, \frac{1}{\frac{f^5f'}{g^5g'}+1}\right)+S(r)\\
&=\overline{N}\left(r,\frac{f^5f'}{g^5g'}\right)+\overline{N}\left(r,\frac{g^5g'}{f^5f'}\right)+\overline{N}\left(r, \frac{1}{-h^5h'}\right)+S(r)\\
&\leq \overline{N}\left(r,\frac{1}{g}\right)+\overline{N}_0\left(r,\frac{1}{g'}\right)+\overline{N}\left(r,\frac{1}{f}\right)
+\overline{N}_0\left(r,\frac{1}{f'}\right)+\overline{N}\left(r,\frac{1}{h}\right)+\overline{N}_0\left(r,\frac{1}{h'}\right)\\
&\quad+S(r)\\
&\leq T(r,g)+T(r,f)+T(r,h)+N_0\left(r,\frac{1}{g'}\right)+N_0\left(r,\frac{1}{f'}\right)+N_0\left(r,\frac{1}{h'}\right)+S(r)\\
&\leq T(r,g)+T(r,f)+T(r,h)+N_0\left(r,\frac{1}{g'}\right)+N\left(r,\frac{f}{f'}\right)+N\left(r,\frac{h}{h'}\right)+S(r)\\
&\leq 3T(r,g)+N_0\left(r,\frac{1}{g'}\right)+T\left(r,\frac{f'}{f}\right)+T\left(r,\frac{h'}{h}\right)+S(r)\\
&\leq 3T(r,g)+N_0\left(r,\frac{1}{g'}\right)+m\left(r,\frac{f'}{f}\right)+N\left(r,\frac{f'}{f}\right)+m\left(r,\frac{h'}{h}\right)+N\left(r,\frac{h'}{h}\right)\\
&\quad+S(r)= 3T(r,g)+N_0\left(r,\frac{1}{g'}\right)+N\left(r,\frac{f'}{f}\right)+N\left(r,\frac{h'}{h}\right)+S(r)\\
&= 3T(r,g)+N_0\left(r,\frac{1}{g'}\right)+\overline{N}\left(r,\frac{1}{f}\right)+\overline{N}\left(r,\frac{1}{h}\right)+S(r)\\
&\leq 3T(r,g)+N_0\left(r,\frac{1}{g'}\right)+T(r,f)+T(r,h)+S(r)\\
&\leq 5T(r,g)+N_0\left(r,\frac{1}{g'}\right)+S(r)\leq 5T(r,g)+N\left(r,\frac{1}{g'}\right)+S(r).
 \end{split}
 \end{equation}
By \eqref{eq5.17} we obtain that all the inequalities in \eqref{eq5.17} are equalities that are equal to $5T(r,g)+N\left(r,\frac{1}{g'}\right)+S(r).$ This gives
\begin{equation}\label{eq5.18}
N\left(r,\frac{1}{g'}\right)=N_0\left(r,\frac{1}{g'}\right)+S(r)= \overline{N}_0\left(r,\frac{1}{g'}\right)+S(r),
\end{equation}
\begin{equation}\label{eq5.19}
T(r,f)=N_0\left(r,\frac{1}{f'}\right)+S(r)= \overline{N}_0\left(r,\frac{1}{f'}\right)+S(r)
\end{equation}
and
\begin{equation}\label{eq5.20}
T(r,h)=N_0\left(r,\frac{1}{h'}\right)+S(r)= \overline{N}_0\left(r,\frac{1}{h'}\right)+S(r).
\end{equation}
By Claim IV(ii) we know that $\frac{g^5g'}{h^5h'}+1$ and $-f^5f'$ share $0$ CM*.  Next, in the same manner as the above, we can deduce by \eqref{eq4.4}, \eqref{eq4.5}, \eqref{eq5.3}, \eqref{eq5.7}-\eqref{eq5.16} and the second fundamental theorem that
\begin{equation}\label{eq5.21}
\begin{split}
    &\quad 5T(r,h)+N\left(r,\frac{1}{h'}\right)+S(r)\\
 &=5\overline{N}_{1)}\left(r,\frac{1}{h}\right)+N\left(r,\frac{1}{h'}\right)-N_{(2}\left(r,\frac{1}{f}\right)-5N\left(r,\frac{1}{\beta}\right)+S(r)\\
&\leq N\left(r,\frac{g^5g'}{h^5h'}\right)+S(r)\leq T\left(r,\frac{g^5g'}{h^5h'}\right)+S(r)\\
&\leq \overline{N}\left(r,\frac{g^5g'}{h^5h'}\right)+\overline{N}\left(r,\frac{h^5h'}{g^5g'}\right)+\overline{N}\left(r, \frac{1}{\frac{g^5g'}{h^5h'}+1}\right)+S(r)\\
&=\overline{N}\left(r,\frac{g^5g'}{h^5h'}\right)+\overline{N}\left(r,\frac{h^5h'}{g^5g'}\right)+\overline{N}\left(r, \frac{1}{-f^5f'}\right)+S(r)\\
&\leq \overline{N}\left(r,\frac{1}{h}\right)+\overline{N}_0\left(r,\frac{1}{h'}\right)+\overline{N}\left(r,\frac{1}{g}\right)
+\overline{N}_0\left(r,\frac{1}{g'}\right)+\overline{N}\left(r,\frac{1}{f}\right)+\overline{N}_0\left(r,\frac{1}{f'}\right)\\
&\quad+S(r)\\
&\leq T(r,h)+T(r,g)+T(r,f)+N_0\left(r,\frac{1}{h'}\right)+N_0\left(r,\frac{1}{g'}\right)+N_0\left(r,\frac{1}{f'}\right)+S(r)\\
&\leq T(r,h)+T(r,g)+T(r,f)+N_0\left(r,\frac{1}{h'}\right)+N\left(r,\frac{g}{g'}\right)+N\left(r,\frac{f}{f'}\right)+S(r)\\
&\leq 3T(r,h)+N_0\left(r,\frac{1}{h'}\right)+T\left(r,\frac{g'}{g}\right)+T\left(r,\frac{f'}{f}\right)+S(r)\\
&\leq 3T(r,h)+N_0\left(r,\frac{1}{h'}\right)+m\left(r,\frac{g'}{g}\right)+N\left(r,\frac{g'}{g}\right)+m\left(r,\frac{f'}{f}\right)+N\left(r,\frac{f'}{f}\right)\\
&\quad+S(r)= 3T(r,h)+N_0\left(r,\frac{1}{h'}\right)+N\left(r,\frac{g'}{g}\right)+N\left(r,\frac{f'}{f}\right)+S(r)\\
\end{split}
\end{equation}
\begin{equation}\nonumber
\begin{split}
&=3T(r,h)+N_0\left(r,\frac{1}{h'}\right)+\overline{N}\left(r,\frac{1}{g}\right)+\overline{N}\left(r,\frac{1}{f}\right)+S(r)\\
&\leq 3T(r,h)+N_0\left(r,\frac{1}{h'}\right)+T(r,g)+T(r,f)+S(r)\\
&\leq 5T(r,h)+N_0\left(r,\frac{1}{h'}\right)+S(r)\leq 5T(r,h)+N\left(r,\frac{1}{h'}\right)+S(r).
 \end{split}
  \end{equation}
By \eqref{eq5.21} we obtain that all the inequalities in \eqref{eq5.21} are equalities that are equal to $5T(r,h)+N\left(r,\frac{1}{h'}\right)+S(r).$ This gives
\begin{equation}\label{eq5.22}
N\left(r,\frac{1}{h'}\right)=N_0\left(r,\frac{1}{h'}\right)+S(r)= \overline{N}_0\left(r,\frac{1}{h'}\right)+S(r)
\end{equation}
and
\begin{equation}\label{eq5.23}
T(r,g)=N_0\left(r,\frac{1}{g'}\right)+S(r)= \overline{N}_0\left(r,\frac{1}{g'}\right)+S(r).
\end{equation}
Obviously
\begin{equation}\label{eq5.24}
\begin{split}
N_0\left(r,\frac{1}{f'}\right)&\leq N\left(r,\frac{1}{f'}\right)\leq T(r,f')+O(1)\leq T(r,f)+S(r),
 \end{split}
\end{equation}
\begin{equation}\label{eq5.25}
N_0\left(r,\frac{1}{g'}\right)\leq N\left(r,\frac{1}{g'}\right)\leq T(r,g)+S(r),
\end{equation}
and
\begin{equation}\label{eq5.26}
N_0\left(r,\frac{1}{h'}\right)\leq N\left(r,\frac{1}{h'}\right)\leq T(r,h)+S(r).
\end{equation}
By \eqref{eq5.18}-\eqref{eq5.20} and \eqref{eq5.22}-\eqref{eq5.26} we deduce  Claim V(v)- Claim V(vii). This together with \eqref{eq4.4},  \eqref{eq5.17} and \eqref{eq5.21} gives  Claim V(i)- Claim V(iv).
\vskip 2mm
\par From Lemma \ref{lemma2.17} and the supposition that $f,$ $g$ and $h$ are transcendental entire functions, we deduce that $f$ has infinitely many repelling periodic points of minimal period $n_0$ for the given positive integer $n_0$ such that $n_0\geq 2.$ As in the proof of Theorem \ref{Theorem1.10},  next we let $z_{n_0}\in \Bbb{C}\setminus\{0\}$ be a repelling periodic point of minimal period $n_0,$ and we have from Lemma \ref{lemma2.18} that  $z_{n_0}\in J(f).$ As in the proof of Theorem \ref{Theorem1.10}, we have \eqref{eq4.49}-\eqref{eq4.59}, where and in what follows, $\delta_m$ is some small positive number satisfying $\delta_{m+1}\leq \delta_m$ for each positive integer $m\in\Bbb{Z}^{+}.$ Noting that $z_{n_0}\in J(f),$ we deduce by \eqref{eq4.51}, \eqref{eq4.52} and the definition of the Julia set $J=J(f)$ of the transcendental entire function $f$ that the family $\{ f_m(z)\}$ with $z\in D_{\delta_m}(z_{n_0})$ for each positive integer $m\in\Bbb{Z}^{+}$ is not normal at the point $z_{n_0}.$ Accordingly, it follows from \eqref{eq4.57} and \eqref{eq4.58} that the family $\left\{\tilde{f}_m(\tilde{z})\right\}$ defined in $|\tilde{z}|<1$ is not normal at the origin point $\tilde{z}=0.$ Combining this with Lemma \ref{lemma2.16}, we see that there exist (iv) points $\tilde{z}_m$ in $|\tilde{z}|<1,$ such that $\tilde{z}_m\rightarrow 0,$ (v) positive numbers $\tilde{\rho}_m,$ such that $\tilde{\rho}_m\rightarrow 0^{+}$
and (vi) an infinite subsequence of $\left\{\tilde{f}_m(\tilde{z})\right\},$ say $\left\{\tilde{f}_m(\tilde{z})\right\}$ itself
such that \eqref{eq4.60} holds, and such that \eqref{eq4.61} holds for each $\zeta\in\Bbb{C}$ and the large positive integer $m\in \Bbb{Z}^{+}.$ Here $\tilde{g}$ is a non-constant entire function. The function $\tilde{g}$
may be taken to satisfy the normalization $\tilde{g}^{\#}(\zeta)\leq\tilde{g}^{\#}(0)=1.$ Moreover, from \eqref{eq4.54} and \eqref{eq4.61} we have \eqref{eq4.62} for each $\zeta\in\Bbb{C}$ and the large positive integer $m$ such that $z=\delta_m\tilde{z}+z_{n_0} \in  D_{\delta_m}(z_{n_0})$ with $\tilde{z}=\tilde{z}_m+\tilde{\rho}_m\zeta$ and $|\tilde{z}|<1.$  Next we use \eqref{eq4.59}-\eqref{eq4.62} and the lines of the reasoning of the proof of Claim III in the proof of Theorem \ref{Theorem1.10} to get the following claim:
\vskip 2mm
\par{\bf Claim VI}. \,  Under the assumptions of Theorem \ref{Theorem1.11}, the function $\tilde{g}(\zeta)$ defined in \eqref{eq4.60} is a transcendental entire function such that $(f(\tilde{g}))'(\zeta)\neq 0$ for each $\zeta\in \Bbb{C}.$ Here and in what follows, $(f(\tilde{g}))'$ denotes the first derivative of the composition $f(\tilde{g})$ of the entire functions $f$ and $\tilde{g}.$
\vskip 2mm
\par Last, we complete the proof of Theorem \ref{Theorem1.11}. Indeed, from Claim VI we know that $\tilde{g}$ in \eqref{eq4.60} is a transcendental entire function. Combining this with the supposition that the transcendental entire functions $f,$ $g,$ $h$ satisfy the functional equation $f^6+g^6+h^6=1,$ we deduce that
$f(\tilde{g}),$ $g(\tilde{g})$ and $h(\tilde{g})$ are still transcendental entire functions that satisfy
\begin{equation}\label{eq5.27}
(f(\tilde{g})(\zeta))^6+(g(\tilde{g})(\zeta))^6+(h(\tilde{g})(\zeta))^6=1
\end{equation}
for each $\zeta\in\Bbb{C}.$ From \eqref{eq5.27} and Claim V(V)-Claim V(vii) we have
\begin{equation}\label{eq5.28}
T\left(r,f(\tilde{g})\right)=N\left(r,\frac{1}{(f(\tilde{g}))'}\right)+S_2(r),
\end{equation}
where and in what follows,  $S_2(r)$ is any quantity such that $S_2(r)=o (T_2(r)),$ as $r\not\in E_2$ and $r\rightarrow\infty.$ Here $E_2\subset (0,+\infty)$ is a set of finite linear measure, and $T_2(r)=T(r,f(\tilde{g}))+T(r,g(\tilde{g}))+T(r,h(\tilde{g})).$ From \eqref{eq5.27} and Lemma \ref{lemma2.12} we have
\begin{equation}\label{eq5.29}
 \begin{split}
 \frac{1}{3}T_2(r)&=T(r,f(\tilde{g})))+S(r)=T(r,g(\tilde{g})))+S_2(r)=T(r,h(\tilde{g})))+S_2(r)\\
&=\overline{N}\left(r,\frac{1}{f(\tilde{g}))}\right)+S(r)=\overline{N}\left(r,\frac{1}{g(\tilde{g}))}\right)+S(r)=\overline{N}\left(r,\frac{1}{h(\tilde{g}))}\right)\\
&\quad+S_2(r).
 \end{split}
 \end{equation}
From Claim VI we have
\begin{equation}\label{eq5.30}
N\left(r,\frac{1}{\left(f(\tilde{g})\right)'}\right)=0.
\end{equation}
We substituting \eqref{eq5.30} into \eqref{eq5.28}, and then we have $T(r,f(\tilde{g}))=S_2(r),$
which contradicts \eqref{eq5.29}. Theorem \ref{Theorem1.11} is thus completely proved.
\vskip 2mm
\par
\section{\bf Applications of the main results in this paper}
\vskip 2mm
\par In 2003, Gundersen-Tohge \cite{Gundersen2003} proved the following two results:
\vskip 2mm
\par
 \begin{theorem}\rm{}(\cite[Remarks concerning Question 1, pp.236-237]{Gundersen2003}) \label {Theorem6.1} \textit{ Suppose that $f$ and $g$ are two non-constant rational functions such that $f^n+1$ and  $g^n+1$ share $0$ CM, where $n\geq 8$ is a positive integer. Then $f^n=g^n$ or $f^ng^n=1.$}
 \end{theorem}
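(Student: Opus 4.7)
The plan is to upgrade the CM-sharing hypothesis into an algebraic identity of the form $f^{n}+1 = H^{n}(g^{n}+1)$ and then invoke Theorem \ref{Theorem1.2}. To this end, I would first show that the ratio $R := (f^{n}+1)/(g^{n}+1)$ is an $n$-th power of a rational function. Since $f^{n}+1$ and $g^{n}+1$ share $0$ CM, every finite zero of the numerator of $R$ is cancelled by a matching zero of the denominator, so on $\mathbb{C}$ the divisor of $R$ is supported entirely on the pole sets of $f$ and $g$. A direct case analysis at a pole of $f$ only, a pole of $g$ only, or a common pole of orders $p, q$ shows that $\mathrm{ord}_{z_{0}}(R)$ is always a multiple of $n$ at every finite $z_{0}$. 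Since the divisor of any rational function on $\mathbb{CP}^{1}$ has total degree zero, $\mathrm{ord}_{\infty}(R)$ must also be a multiple of $n$; hence $R = c\, H^{n}$ for some rational $H$ and constant $c \in \mathbb{C}\setminus\{0\}$, and absorbing an $n$-th root of $c$ into $H$ gives
\[
  f^{n}+1 = H^{n}(g^{n}+1).
\]

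Next I would fix $\mu \in \mathbb{C}$ with $\mu^{n} = -1$. Rewriting the identity as $(Hg)^{n} + H^{n} - f^{n} = 1$ and using $-f^{n} = (\mu f)^{n}$ gives
\[
  (\mu f)^{n} + (Hg)^{n} + H^{n} = 1.
\]
Theorem \ref{Theorem1.2} forbids non-constant rational triples from satisfying such an equation when $n \geq 8$. Since $f$ is non-constant, so is $\mu f$, and therefore at least one of $Hg$ or $H$ must be constant.

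The final step is to analyse the two degenerate cases. If $H \equiv c$ is constant, then $f^{n} = c^{n}g^{n} + (c^{n}-1)$: if $c^{n} = 1$ we are done with $f^{n} = g^{n}$; otherwise a suitable rescaling produces $A^{n} + B^{n} = 1$ with $A, B$ non-zero constant multiples of $f$ and $g$, which is impossible for $n \geq 3$ because the Fermat curve has positive genus and admits no non-constant rational parametrization. If $Hg \equiv \nu$ is constant, substituting $H = \nu/g$ back into $f^{n}+1 = H^{n}(g^{n}+1)$ yields $(fg)^{n} + (1-\nu^{n})g^{n} = \nu^{n}$: the case $\nu^{n} = 1$ gives $f^{n}g^{n} = 1$, while $\nu^{n} \neq 1$ produces the same Fermat-type contradiction. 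Either way we recover $f^{n} = g^{n}$ or $f^{n}g^{n} = 1$, as required.

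The main obstacle is the first step, namely extracting the $n$-th power identity from CM sharing. The local computation showing that every order of $R$ at a finite point is a multiple of $n$ needs a careful case-by-case argument at common poles of $f$ and $g$, where partial cancellation of polar orders must be tracked, together with the observation that a shared zero of $f^n+1$ cannot coincide with a pole of either function. Once divisibility by $n$ holds at every finite point, however, the global statement on $\mathbb{CP}^{1}$ follows immediately from the fact that principal divisors on the Riemann sphere have degree zero. After that, combining Theorem \ref{Theorem1.2} with the non-parametrizability of the Fermat curve for $n \geq 3$ reduces the rest of the argument to routine bookkeeping.
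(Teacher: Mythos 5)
Your proposal is correct. Keep in mind, though, that the paper contains no proof of this statement: Theorem \ref{Theorem6.1} is simply quoted from Gundersen--Tohge, and the paper's only contribution at this point is to upgrade it to meromorphic/entire functions (Theorems \ref{Theorem6.3} and \ref{Theorem6.4}) by invoking Corollaries \ref{corollary1.12} and \ref{corollary1.13} in place of Theorem \ref{Theorem1.2}. Your argument is the natural self-contained proof along exactly the lines this framework presupposes: CM sharing makes every finite order of $R=(f^{n}+1)/(g^{n}+1)$ a multiple of $n$ (the shared zeros cancel, and a shared zero cannot be a pole of $f$ or of $g$), hence $R=cH^{n}$ and $f^{n}+1=H^{n}(g^{n}+1)$; then $(\mu f)^{n}+(Hg)^{n}+H^{n}=1$ together with Theorem \ref{Theorem1.2} forces $H$ or $Hg$ to be constant, and the positive genus of the Fermat curve disposes of the degenerate constants. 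Two items you left to ``routine bookkeeping'' should be recorded explicitly: $H\not\equiv 0$ (otherwise $f^{n}\equiv -1$, contradicting that $f$ is non-constant), so the constants $c$ and $\nu$ are nonzero; and in the case $Hg\equiv\nu$ with $\nu^{n}\neq 1$ the product $fg$ is non-constant (otherwise $(1-\nu^{n})g^{n}$ would be constant, forcing $g$ constant), so the Fermat-curve contradiction genuinely applies. Finally, the detour through $\mathrm{ord}_{\infty}(R)$ is unnecessary: factoring $R=c\prod_{i}(z-a_{i})^{m_{i}}$ over its finite zeros and poles, with every $m_{i}$ divisible by $n$, already exhibits $R$ as $c$ times an $n$-th power of a rational function.
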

\vskip 2mm
\par
\begin{theorem}\rm{}(\cite[Remarks concerning Question 1, pp.236-237]{Gundersen2003}) \label {Theorem6.2} \textit{ Suppose that $f$ and $g$ are two non-constant
polynomials such that $f^n+1$ and  $g^n+1$ share $0$ CM, where $n\geq 6$ is a positive integer. Then $f^n=g^n$ or $f^ng^n=1.$}
 \end{theorem}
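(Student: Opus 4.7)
The plan starts from the observation that the CM sharing hypothesis becomes very strong when the functions are polynomials: since $f^n+1$ and $g^n+1$ are polynomials sharing the same zero set with the same multiplicities, their quotient is a rational function with neither zeros nor poles in $\Bbb{C}$, hence a nonzero constant. I would therefore first write
\[f^n + 1 = c(g^n + 1) \qquad\text{for some }c\in\Bbb{C}\setminus\{0\},\]
and then split into two cases according to whether $c=1$.

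The case $c=1$ gives at once the first conclusion $f^n=g^n$. When $c\neq 1$, rearranging yields
\[\frac{1}{c-1}\,f^n+\frac{-c}{c-1}\,g^n=1,\]
and choosing $n$-th roots $\alpha,\beta$ of $1/(c-1)$ and $-c/(c-1)$, the substitution $F=\alpha f$, $G=\beta g$ reduces matters to showing that the two-variable Fermat equation $F^n+G^n=1$ has no non-constant polynomial solutions for $n\geq 6$. This I would dispatch by a direct Mason--Stothers ($abc$ for polynomials) argument: any common zero of $F$ and $G$ would force $0+0=1$, so $F,G$ are automatically coprime, and applying Mason--Stothers to $F^n+G^n-1=0$ yields $n\max(\deg F,\deg G)\leq \deg F+\deg G-1$, whence $(n-2)(\deg F+\deg G)\leq -2$, which is absurd already for $n\geq 3$.

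Finally, I would note that the alternative conclusion $f^n g^n=1$ in the statement, which is a genuine second possibility in the rational-function setting of Theorem \ref{Theorem6.1}, is vacuous here: $(fg)^n=1$ forces $fg$ to be a constant and hence both $f$ and $g$ to be constants, contradicting non-constancy. Thus in the polynomial case the conclusion collapses to $f^n=g^n$. There is no substantial obstacle in this plan; the only step requiring slight care is the choice of the $n$-th roots $\alpha,\beta$, which is possible precisely because $c\notin\{0,1\}$, and the generous bound $n\geq 6$ merely reflects that the theorem is stated in parallel with Theorem \ref{Theorem6.1} rather than being sharp for this approach.
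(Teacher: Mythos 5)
Your proposal is correct, but note that the paper itself gives no proof of Theorem \ref{Theorem6.2}: it is quoted from Gundersen--Tohge \cite{Gundersen2003} purely as the benchmark that Theorem \ref{Theorem6.4} improves, so the comparison is really with the cited source's Fermat-equation framework rather than with anything proved here. Your route is the elementary one: since $f^n+1$ and $g^n+1$ are polynomials sharing $0$ CM, the quotient is a rational function without zeros or poles in $\Bbb{C}$, hence a constant $c\neq 0$; the case $c=1$ gives $f^n=g^n$, and for $c\neq 1$ the normalizing $n$-th roots exist because $c\notin\{0,1\}$, the functions $F,G$ are automatically coprime, and Mason--Stothers gives $n\max(\deg F,\deg G)\leq \deg F+\deg G-1$, hence $(n-2)(\deg F+\deg G)\leq -2$, a contradiction. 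You are also right that the alternative $f^ng^n=1$ is vacuous for non-constant polynomials, while it is genuinely attained in the entire and rational settings (e.g. $f=e^{z}$, $g=e^{-z}$ for Theorem \ref{Theorem6.4}). What the two approaches buy is different: your two-term reduction proves the polynomial statement for every $n\geq 3$, and makes clear that the hypothesis $n\geq 6$ here (and $n\geq 8$ in Theorem \ref{Theorem6.1}) comes from treating the polynomial and rational cases uniformly through three-term Fermat-type results such as Theorems \ref{Theorem1.4} and \ref{Theorem1.2}; that heavier machinery is genuinely needed in the rational case, where the quotient is no longer constant but of the form $c(A/B)^n$ and a three-term equation is unavoidable, but, as your argument shows, it is superfluous for polynomials.
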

\vskip 2mm
\par From Corollary \ref{corollary1.12} and Corollary \ref{corollary1.13} we get the following two results that improve Theorem \ref{Theorem6.1} and  Theorem \ref{Theorem6.2} respectively:
\vskip 2mm
\par
 \begin{theorem}\rm{} \label {Theorem6.3} \textit{ Suppose that $f$ and $g$ are two non-constant meromorphic functions such that $f^n+1$ and  $g^n+1$ share $0$ CM, where $n\geq 8$ is a positive integer. Then $f^n=g^n$ or $f^ng^n=1.$}
 \end{theorem}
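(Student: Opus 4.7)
The plan is to reduce the theorem to Corollary~\ref{corollary1.12} (for $n=8$), Theorem~\ref{Theorem1.1} (for $n\ge 9$), and the Gross two-term Fermat theorem~\cite{Gross1966} (for $n\ge 4$). First I would introduce $\lambda := (f^n+1)/(g^n+1)$, which is meromorphic and identically neither $0$ nor $\infty$ (using that $g$ non-constant forces $g^n+1\not\equiv 0$). Because $f^n+1$ and $g^n+1$ share zeros with multiplicity, every such zero cancels in the quotient, so the divisor of $\lambda$ is supported only on poles of $f$ and poles of $g$; at every such point the contribution is exactly $n$ times the difference of pole orders. Every multiplicity in the divisor of $\lambda$ is therefore divisible by $n$, and a standard Weierstrass product representation, combined with writing the remaining entire non-vanishing factor as $e^\alpha$ and passing to $e^{\alpha/n}$, produces a meromorphic function $\nu$ on $\mathbb{C}$ with $\lambda=\nu^n$.

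Rewriting the sharing relation as $f^n-(\nu g)^n=\nu^n-1$ and picking any $n$-th root $\xi$ of $-1$, the algebraic cancellation
\begin{equation}\nonumber
(\xi f)^n + (\nu g)^n + \nu^n \;=\; -f^n + \nu^n(g^n+1) \;=\; -f^n + (f^n+1) \;=\; 1
\end{equation}
exhibits meromorphic functions $F:=\xi f$, $G:=\nu g$, $H:=\nu$ satisfying the Fermat-type equation $F^n+G^n+H^n=1$. This is the key reduction, after which the proof becomes a case analysis on $\nu$.

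If $\nu$ is a constant $c_0$, then $\lambda\equiv c:=c_0^n$ is a nonzero constant: for $c=1$ the relation forces $f^n=g^n$, while for $c\ne 1$ the equation $f^n-cg^n=c-1$ normalises to $(Af)^n+(Bg)^n=1$ with nonzero constants $A,B$, contradicting Gross's theorem since $f,g$ are non-constant and $n\ge 4$. If $\nu$ is non-constant, then $F$ and $H$ are non-constant; if in addition $G=\nu g$ is non-constant, Corollary~\ref{corollary1.12} (applied when $n=8$) or Theorem~\ref{Theorem1.1} (applied when $n\ge 9$) forbids the equation $F^n+G^n+H^n=1$, a contradiction. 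The remaining possibility is $\nu g\equiv c$ a nonzero constant; substituting $g=c/\nu$ into $f^n+1=\nu^n(g^n+1)$ yields $f^n-\nu^n=c^n-1$, where the sub-case $c^n\ne 1$ again contradicts Gross applied to the non-constant pair $f,\nu$, while $c^n=1$ forces $f=\eta\nu$ for a constant $n$-th root of unity $\eta$, whence $fg=\eta c$ and $f^ng^n=(fg)^n=\eta^nc^n=1$ as required.

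The step I expect to be the most delicate is the structural one of writing $\lambda=\nu^n$ with $\nu$ meromorphic on $\mathbb{C}$, since it requires careful bookkeeping of how CM sharing forces every multiplicity in the divisor of $\lambda$ to be a multiple of $n$. Once $\nu$ is in hand, the triple $(\xi f,\nu g,\nu)$ appears essentially for free, and everything else is a routine case analysis combining Gross's two-term theorem with Corollary~\ref{corollary1.12} and Theorem~\ref{Theorem1.1}.
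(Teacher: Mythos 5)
Your proposal is correct and follows essentially the route the paper intends: the paper states Theorem~\ref{Theorem6.3} as a direct consequence of Corollary~\ref{corollary1.12} (following the Gundersen--Tohge argument for the rational case), and your reduction --- writing $(f^n+1)/(g^n+1)=\nu^n$ via the CM hypothesis, producing the triple $(\xi f,\nu g,\nu)$ with $F^n+G^n+H^n=1$, and then disposing of the degenerate cases with Gross's two-term theorem while invoking Corollary~\ref{corollary1.12} for $n=8$ and Theorem~\ref{Theorem1.1} for $n\ge 9$ --- is exactly that derivation, spelled out in full. All cases are covered and the two conclusions $f^n=g^n$ and $f^ng^n=1$ emerge correctly, so no changes are needed.
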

\vskip 2mm
\par
\begin{theorem}\rm{} \label {Theorem6.4} \textit{ Suppose that $f$ and $g$ are two non-constant
entire functions such that $f^n+1$ and  $g^n+1$ share $0$ CM, where $n\geq 6$ is a positive integer. Then $f^n=g^n$ or $f^ng^n=1.$}
 \end{theorem}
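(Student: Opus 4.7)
The plan is to reduce Theorem \ref{Theorem6.4} to the non-existence theorems for entire solutions of the three-term Fermat-type equation $F^n+G^n+H^n=1$, namely Toda's Theorem \ref{Theorem1.3} for $n\geq 7$ and Corollary \ref{corollary1.13} for $n=6$. Since $f,g$ are entire and $f^n+1,\,g^n+1$ share $0$ CM, the ratio $(f^n+1)/(g^n+1)$ is a zero-free entire function, hence equals $e^{\alpha}$ for some entire function $\alpha$. Rearranging gives the fundamental identity
\begin{equation}\label{eqplanA}
f^n-e^{\alpha}g^n=e^{\alpha}-1.
\end{equation}

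The argument then branches according to the nature of $\alpha$. If $e^{\alpha}\equiv 1$, \eqref{eqplanA} reads $f^n=g^n$ and we are done. If $\alpha$ is a non-trivial constant with $c:=e^{\alpha}\notin\{0,1\}$, dividing \eqref{eqplanA} by $c-1$ yields $Af^n+Bg^n=1$ for non-zero constants $A,B$, and hence a two-term Fermat equation $P^n+Q^n=1$ with $P=A^{1/n}f$, $Q=B^{1/n}g$ non-constant entire; this contradicts the Gross theorem for $n\geq 4$ already invoked after \eqref{eq4.2}. So we may henceforth assume $\alpha$ is non-constant.

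In this case, set $\phi:=e^{\alpha/n}$, an entire zero-free function with $\phi^n=e^{\alpha}$, and choose a constant $\lambda$ with $\lambda^n=-1$. Then \eqref{eqplanA} rearranges into the three-term identity
\begin{equation}\label{eqplanB}
\left(\frac{f}{\phi}\right)^n+(\lambda g)^n+\left(\frac{1}{\phi}\right)^n=1,
\end{equation}
so the entire functions $F:=f/\phi$, $G:=\lambda g$, $H:=1/\phi$ satisfy a Fermat-type equation of degree $n$. Here $G$ is non-constant because $g$ is, and $H$ is non-constant because $\alpha$ is. Hence Theorem \ref{Theorem1.3} for $n\geq 7$, respectively Corollary \ref{corollary1.13} for $n=6$, forces $F$ to be constant.

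The main obstacle is the analysis of this forced constancy of $F$; it is precisely where the second alternative $f^ng^n=1$ will emerge. If $F\equiv c$ is constant, then $f=c\phi$ and $f^n=c^n e^{\alpha}$, so substitution into \eqref{eqplanA} gives $g^n=(c^n-1)+e^{-\alpha}$. When $c^n=1$ we obtain $f^n g^n=e^{\alpha}\cdot e^{-\alpha}=1$, exactly as required. When $c^n\neq 1$ the identity $g^n-(c^n-1)=e^{-\alpha}$ combined with $e^{-\alpha}\neq 0$ shows that $g$ is a non-constant entire function omitting each of the $n$ distinct $n$-th roots of $c^n-1$, contradicting Picard's little theorem for $n\geq 2$. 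Combining all the cases yields the dichotomy $f^n=g^n$ or $f^ng^n=1$.
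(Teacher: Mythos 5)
Your proposal is correct and is essentially the argument the paper intends: the paper simply asserts that Theorem \ref{Theorem6.4} follows from Corollary \ref{corollary1.13}, and your reduction (writing the CM-sharing as $f^n+1=e^{\alpha}(g^n+1)$, passing to $F^n+G^n+H^n=1$ with $F=f e^{-\alpha/n}$, $G=\lambda g$, $H=e^{-\alpha/n}$, and invoking Toda's Theorem \ref{Theorem1.3} for $n\geq 7$ and Corollary \ref{corollary1.13} for $n=6$, with Gross's theorem and Picard handling the degenerate cases) is exactly the standard Gundersen--Tohge-type deduction being invoked. The only cosmetic point is that the Gross result quoted after \eqref{eq4.2} concerns meromorphic solutions of $u^8+v^8=1$; for your constant-$\alpha$ case you need the (equally classical) fact that $u^n+v^n=1$ has no non-constant entire solutions for $n\geq 3$, which is also covered by Gross's paper.
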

\vskip 2mm
\par From Corollary \ref{corollary1.12} and Corollary \ref{corollary1.13} we also get the following two results:
 \par
\vskip 2mm
 \begin{theorem} \label {Theorem6.5} \rm{}
\textit{ There do not exist non-constant meromorphic solutions $f,$ $g$ and $h$ satisfying the functional equation $f^8(z)+g^8(z)+h^8(z)=z$ for $z\in \Bbb{C}.$ }
 \end{theorem}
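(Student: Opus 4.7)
The plan is to reduce the identity $f^{8}(z)+g^{8}(z)+h^{8}(z)=z$ to the setting of Corollary \ref{corollary1.12} via the substitution $z=w^{8}$. Assume for contradiction that non-constant meromorphic functions $f,g,h$ on $\Bbb{C}$ satisfy the given identity. Set
\begin{equation*}
F(w):=f(w^{8}),\qquad G(w):=g(w^{8}),\qquad H(w):=h(w^{8}),
\end{equation*}
each of which is meromorphic on $\Bbb{C}$, being the composition of a meromorphic function with the entire function $w\mapsto w^{8}$. Substituting $z=w^{8}$ produces $F^{8}+G^{8}+H^{8}=w^{8}$, and dividing by $w^{8}$ yields
\begin{equation*}
\left(\frac{F(w)}{w}\right)^{8}+\left(\frac{G(w)}{w}\right)^{8}+\left(\frac{H(w)}{w}\right)^{8}=1
\end{equation*}
as an identity of meromorphic functions on $\Bbb{C}$.

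Next I would show that the three quotients $F/w$, $G/w$, $H/w$ are all non-constant, which is the crux of the argument. Suppose, to the contrary, that $F/w\equiv a$ for some constant $a$, so $f(w^{8})=aw$ identically. If $a=0$, then $F\equiv 0$ on $\Bbb{C}$, and since $w\mapsto w^{8}$ maps $\Bbb{C}$ onto $\Bbb{C}$, this forces $f\equiv 0$, contradicting the non-constancy of $f$. If $a\neq 0$, then apply the automorphism $w\mapsto\epsilon w$ with $\epsilon=e^{2\pi i/8}\neq 1$: the composition $f((\epsilon w)^{8})=f(w^{8})$ is invariant, while the right-hand side becomes $a\epsilon w$, so $a(\epsilon-1)w\equiv 0$, forcing $a=0$ and yielding a contradiction. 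The same reasoning rules out $G/w$ or $H/w$ being constant.

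With $(F/w,G/w,H/w)$ a triple of non-constant meromorphic solutions of the functional equation of Corollary \ref{corollary1.12}, that corollary supplies the required contradiction and completes the proof of Theorem \ref{Theorem6.5}. The main obstacle in the argument is the Galois-type symmetry step, which amounts to the observation that no single-valued meromorphic function $f$ on $\Bbb{C}$ can satisfy $f(w^{8})=aw$ with $a\neq 0$, since the right-hand side fails to be invariant under $w\mapsto\epsilon w$ for non-trivial eighth roots of unity $\epsilon$; granted this step, the conclusion follows immediately from Corollary \ref{corollary1.12}.
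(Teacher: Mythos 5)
Your proposal is correct and matches the paper's intent: the paper states Theorem \ref{Theorem6.5} as a consequence of Corollary \ref{corollary1.12} without writing out the details, and your substitution $z=w^{8}$, division by $w^{8}$, and root-of-unity argument showing $F/w$, $G/w$, $H/w$ are non-constant is precisely the reduction needed to make that deduction rigorous.
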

 \begin{theorem} \label {Theorem6.6}\rm{ }
\textit{There do not exist non-constant entire solutions $f,$ $g$ and $h$ satisfying the functional equation $f^6(z)+g^6(z)+h^6(z)=z$ for $z\in \Bbb{C}.$}
\end{theorem}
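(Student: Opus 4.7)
The plan is to reduce Theorem \ref{Theorem6.6} directly to Corollary \ref{corollary1.13} via a substitution that transforms the right-hand side $z$ into the constant $1.$ Assuming for contradiction that non-constant entire functions $f,g,h$ satisfy $f^6(z)+g^6(z)+h^6(z)=z$ on $\Bbb{C},$ I would substitute $z=e^w$ to obtain $f(e^w)^6+g(e^w)^6+h(e^w)^6=e^w,$ and then multiply by $e^{-w}=\left(e^{-w/6}\right)^6$ to arrive at
\begin{equation}\nonumber
F(w)^6+G(w)^6+H(w)^6=1,
\end{equation}
where
\begin{equation}\nonumber
F(w)=f(e^w)e^{-w/6},\quad G(w)=g(e^w)e^{-w/6},\quad H(w)=h(e^w)e^{-w/6}.
\end{equation}
Since $f\circ\exp,$ $g\circ\exp,$ $h\circ\exp$ are entire and $e^{-w/6}$ is a nowhere-vanishing entire function of $w,$ the three functions $F,G,H$ are entire.

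The crux of the argument will be to verify that each of $F,G,H$ is non-constant. Suppose, toward a contradiction, that $F(w)\equiv c$ for some $c\in\Bbb{C}.$ Then $f(e^w)=c\,e^{w/6}$ for every $w\in\Bbb{C}.$ Evaluating at $w=0$ and at $w=2\pi i$ gives $f(1)=c$ and $f(1)=c\,e^{\pi i/3}$ respectively; since $e^{\pi i/3}\neq 1,$ this forces $c=0,$ so $f(e^w)\equiv 0.$ Because $\exp$ is surjective onto $\Bbb{C}\setminus\{0\},$ we get $f\equiv 0$ on $\Bbb{C}\setminus\{0\},$ and by the identity theorem $f\equiv 0$ on all of $\Bbb{C},$ contradicting the hypothesis that $f$ is non-constant. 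The identical argument applies to $G$ and $H.$

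Having established that $F,G,H$ are non-constant entire functions satisfying $F^6+G^6+H^6=1,$ the desired contradiction will follow immediately from Corollary \ref{corollary1.13}. I expect the only nontrivial step to be the multi-valuedness argument above that rules out constant $F,G,$ or $H;$ once that short verification is in place, the reduction is completely routine and requires no new Nevanlinna-theoretic work.
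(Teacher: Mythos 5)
Your reduction is correct and is essentially the route the paper intends: Theorem \ref{Theorem6.6} is stated there as a direct consequence of Corollary \ref{corollary1.13}, and your substitution $z=e^{w}$ with $F=f(e^{w})e^{-w/6}$, etc., supplies a valid such reduction, since $e^{-w/6}$ is entire and non-vanishing (so $F,G,H$ stay entire, avoiding the pole at the origin that the naive $z\mapsto z^{6}$ substitution could introduce) and your periodicity argument correctly rules out constant $F,G,H$. No gaps.
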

\vskip 2mm
\par{\bf Acknowledgements.} 
The authors also want to express their thanks to Professor Gary G Gundersen for his valuable suggestions and comments on several drafts of the paper. The first author met Professor Gundersen at the workshop on complex analysis on  December 6, 2016 at Taiyuan University of Technology.
\def\cprime{$'$}
\providecommand{\bysame}{\leavevmode\hbox to3em{\hrulefill}\thinspace}
\providecommand{\MR}{\relax\ifhmode\unskip\space\fi MR }
\providecommand{\MRhref}[2]{%
  \href{http://www.ams.org/mathscinet-getitem?mr=#1}{#2}
}
\providecommand{\href}[2]{#2}

\end{document}